\documentclass[onefignum,onetabnum]{siamart190516}
\hypersetup{colorlinks=true,linkcolor=black,citecolor=black}

\usepackage[algo2e,linesnumbered,vlined,ruled]{algorithm2e}
\usepackage{graphics,graphicx,epsf,subfigure,epstopdf}
\usepackage{comment}
\usepackage{times,hyperref}
\usepackage{amsmath,amsxtra,amsfonts,amscd,amssymb,bm}
\allowdisplaybreaks 
\usepackage{supertabular,multirow}
\usepackage{booktabs}
\usepackage[normalem]{ulem}
\usepackage[title]{appendix}
\usepackage{exscale}
\newcommand{\crefalg}[1]{\hyperref[#1]{Algorithm~\ref*{#1}}}
\usepackage{fancyhdr}  

\usepackage{tikz,array}
\usetikzlibrary{shapes.geometric}
\usetikzlibrary{shapes.arrows}
\usetikzlibrary{calc}
\usetikzlibrary{intersections}



\newcommand{\R}{\mathbb{R}}
  
\newcommand{\Rnp}{\mathbb{R}^{n\times p}} 
\newcommand{\Rnnpp}{\mathbb{R}^{2n\times 2p}} 
\newcommand{\Rnn}{\mathbb{R}^{n\times n}}

\newcommand{\ff}{_{\mathrm{F}}}  
\newcommand{\fs}{^2_{\mathrm{F}}}
\newcommand{\half}{\frac{1}{2}}
\newcommand{\inv}{^{-1}} 
\newcommand{\st}{\mathrm{s.\,t.}\,\,} 
\newcommand{\zz}{^{\top}} 

\newcommand{\manifold}{{\cal M}}
\newcommand{\stiefel}{\mathrm{St}(p,n)}
\newcommand{\symplectic}{\mathrm{Sp}(2p,2n)}
\newcommand{\symplecticgroup}{\mathrm{Sp}(2n)}
\newcommand{\skewset}{{\cal S}_{\mathrm{skew}}}
\newcommand{\symset}{{\cal S}_{\mathrm{sym}}}
\newcommand{\TX}{{\mathrm{T}_{X}}\symplectic}
\newcommand{\NX}{\dkh{\TX}^\perp}
\newcommand{\proj}{\mathcal{P}_X}
\newcommand{\projn}{\proj^\perp}


\DeclareMathOperator*{\cay}{cay}
\DeclareMathOperator*{\dimension}{dim}

\DeclareMathOperator*{\dom}{dom}
\DeclareMathOperator*{\qgeo}{qgeo}
\DeclareMathOperator*{\rank}{rank}

\DeclareMathOperator*{\sym}{sym}
\DeclareMathOperator*{\skewsym}{skew}
\DeclareMathOperator*{\spn}{span}
\DeclareMathOperator*{\tr}{tr}

\newcommand{\abs}[1]{\left|#1\right|}
\newcommand{\dkh}[1]{\left(#1\right)}
\newcommand{\fkh}[1]{\left[#1\right]}
\newcommand{\hkh}[1]{\left\{#1\right\}}
\newcommand{\jkh}[1]{\left\langle#1\right\rangle}
\newcommand{\norm}[1]{\left\|#1\right\|}

\newcommand{\rmetric}[1]{\jkh{#1}_{X}}		
\newcommand{\rmetrick}[1]{\jkh{#1}_{X^k}}		
\newcommand{\rgrad}[1]{\mathrm{grad}_\rho f(#1)}
\newcommand{\rgradg}[1]{\mathrm{grad} f(#1)}
\newcommand{\rgradbar}[1]{\mathrm{grad}_\rho \bar{f}(#1)}
				

\graphicspath{ {./images/} }
\definecolor{Gray}{rgb}{0.5,0.5,0.5}
\definecolor{myred}{rgb}{0.7,0,0.1}
\definecolor{mygreen}{rgb}{0,0.6,0.2}
\definecolor{myblue}{rgb}{0.5,0,1}





\begin{document}

\title{Riemannian Optimization on the Symplectic Stiefel Manifold\thanks{Submitted to the editors June 26, 2020.
\funding{This work 
was supported by the Fonds de la Recherche Scientifique -- FNRS and the Fonds Wetenschappelijk Onderzoek -- Vlaanderen under EOS Project no. 30468160.}}}

\headers{Riemannian Optimization on the Symplectic Stiefel Manifold}{B. Gao, N. T. Son, P.-A. Absil, and T. Stykel}

\author{
	Bin Gao\thanks{ICTEAM Institute, UCLouvain, Louvain-la-Neuve, Belgium (gaobin@lsec.cc.ac.cn, pa.absil@uclouvain.be).}
	\and Nguyen Thanh Son\thanks{ICTEAM Institute, UCLouvain, Louvain-la-Neuve, Belgium;  Department of Mathematics and Informatics, Thai Nguyen University of Sciences, Thai Nguyen, Vietnam (thanh.son.nguyen@uclouvain.be).}
	\and P.-A. Absil\footnotemark[2]
	\and Tatjana Stykel\thanks{Institute of Mathematics, University of Augsburg, Augsburg, Germany (tatjana.stykel@math.uni-augsburg.de).}	
}


\maketitle


\begin{abstract}
	The symplectic Stiefel manifold, denoted by $\symplectic$, is the set of linear symplectic maps between the standard symplectic spaces $\mathbb{R}^{2p}$ and $\mathbb{R}^{2n}$. When $p=n$, it reduces to the well-known set of $2n\times 2n$ symplectic matrices. Optimization problems on $\mathrm{Sp}(2p,2n)$ find applications in various areas, such as optics, quantum physics, numerical linear algebra and model order reduction of dynamical systems. The purpose of this paper is to propose and analyze gradient-descent methods on $\symplectic$, where the notion of gradient stems from a Riemannian metric. We consider a novel Riemannian metric on $\symplectic$ akin to the canonical metric of the (standard) Stiefel manifold. In order to perform a feasible step along the antigradient, we develop two types of search strategies: one is based on quasi-geodesic curves, and the other one on the symplectic Cayley transform. The resulting optimization algorithms are proved to converge globally to critical points of the objective function. Numerical experiments illustrate the efficiency of the proposed methods.
\end{abstract}

\begin{keywords}
	Riemannian optimization, symplectic Stiefel manifold, quasi-geodesic, Cayley transform
\end{keywords}

\begin{AMS}
	65K05, 70G45, 90C48
\end{AMS}

\section{Introduction} \label{sec:intro}
We consider the following optimization problem with symplectic constraints:
\begin{eqnarray}\label{prob:original}
\begin{array}{cl}
\min\limits_{X\in\Rnnpp}&f(X)\\
\st &  X\zz J_{2n} X = J_{2p},
\end{array}
\end{eqnarray}
where $p\le n$, $J_{2m}=\fkh{\begin{smallmatrix}
0&I_m\\ -I_m & 0
\end{smallmatrix}}$, and $I_m$ is the $m\times m$ identity matrix for any positive integer $m$. 
When there is no confusion, we omit the subscript of $J_{2m}$ and $I_m$ for simplicity. 
We assume that the objective function $f: \Rnnpp\rightarrow\R$ is continuously differentiable. 
The feasible set of problem \eqref{prob:original} is 
known as the {\it symplectic Stiefel manifold} \cite{sigrist1973cross,ajayi2013explicit}
$$\symplectic:=\hkh{X\in\Rnnpp: X\zz J_{2n} X = J_{2p}}.$$ 
Whereas the term usually refers to the $p=n$ case, we call a matrix $X$ \emph{symplectic} whenever $X\in\symplectic$, as~\cite{PengM16} does.
When $p=n$, the symplectic Stiefel manifold $\symplectic$ becomes a matrix Lie group, termed the \emph{symplectic group} and denoted by $\symplecticgroup$.

Note that the name ``symplectic Stiefel manifold'' is also found in the literature  \cite{hsiang1968classification} to denote the set of orthogonal frames in the quaternion {$n$-space}. This definition differs fundamentally from $\symplectic$ since the latter is noncompact, as we will see in \cref{sec:geometry}.

Optimization with symplectic constraints appears in various fields. In the case $p=n$, i.e., the symplectic group, most applications are found in physics. In the study of optical systems, such as human eyeballs~\cite{harris2004averageeye,fiori2016riemannian}, the problem of averaging optical transference matrices can be formulated as an optimization problem with symplectic constraints.
Another application can be found in accelerator design and charged-particle beam transport \cite{draft1988lie}, where symplectic constraints are required for characterizing beam dynamics. A recent application is also found in the optimal control of quantum symplectic gates \cite{wu2008optimal}, where one has to find a~(square) symplectic matrix such that the distance from this sought matrix to a given symplectic matrix is minimal. It is indeed an optimization problem on $\symplecticgroup$, where $n$ corresponds to the number of quantum observables.

Furthermore,
several
problems in scientific computing require solving \eqref{prob:original} for $p<n$.
For instance, \cite{williamson1936algebraic} states that there exists a symplectic matrix that diagonalizes a given symmetric positive definite matrix, which is popularly termed the symplectic eigenvalue problem. In many cases, one is interested in computing only a  few extreme eigenvalues. In~\cite{bhatia2015symplectic}, it is established that the sum of the
$p$ smallest symplectic eigenvalues
is equal to the minimal value of an optimization problem on $\mathrm{Sp}(2p,2n)$. 
We also mention the projection-based symplectic model order reduction problem in systems and control
theory \cite{PengM16,AfkhH17,BuchBH19}. 
This problem requires 
to reduce the order ($p\ll n$) of a Hamiltonian system, and at the same time, preserve the Hamiltonian structure. This can only be done via finding so-called symplectic projection matrices, 
and it is formulated as the problem \eqref{prob:original} where the objective function describes the projection error.

The symplectic constraints make
the problem~\eqref{prob:original} out of the reach of several optimization techniques: the feasible set is nonconvex and, in contrast with the (standard) Stiefel manifold~\cite[Theorem~10.2]{bhattacharya_bhattacharya_2012}, the projection onto the symplectic Stiefel manifold does not admit a known closed-form expression. It appears that all the existing methods that explicitly address~\eqref{prob:original} restrict either to a specific objective function or to the case $p=n$ (symplectic group).
Optimality conditions of the Brockett function \cite{brockett1989least} over quadratic matrix Lie groups, which include the symplectic group, were studied in~\cite{machado2002optimization}.
In~\cite{wu2010critical}, the critical landscape topology for optimization on the symplectic group {was} investigated, where the corresponding cost function is the Frobenius distance from a target symplectic transformation. In~\cite{fiori2011solving}, Fiori studied the geodesics of the symplectic group 
under a pseudo-Riemannian metric
and {proposed} a geodesic-based method for computing the empirical mean of a set of symplectic matrices. 
Follow-up work can be found in \cite{fiori2016riemannian,wang2018riemannian}. More recently, Birtea et al.~\cite{birtea2018optimization}
studied the first and second order optimality conditions for optimization problems on the symplectic group. Their proposed method computes the steepest-descent direction with respect to a~left-invariant metric, and adopts the symplectic Cayley transform \cite{machado2002optimization,de2014metaplectic} as a retraction to preserve the symplectic group constraint.

In this paper, we propose 
Riemannian gradient methods for optimization problems on $\symplectic$. To this end, we first prove
that $\symplectic$
is
a closed unbounded 
embedded submanifold of the Euclidean space $\Rnnpp$.
Then, 
leveraging two explicit characterizations of its tangent space,
we develop a class of novel canonical-like Riemannian metrics, 
with respect to which we obtain expressions for the normal space, the tangent and normal projections, and the Riemannian gradient.

We propose two strategies to select a search curve on $\symplectic$ along a given tangent direction.
One is based on a
quasi-geodesic curve, and it needs to compute two matrix exponentials. The other is based on the symplectic Cayley transform, which requires to solve a~{$2n\times 2n$} linear system. 
By exploiting the low-rank structure of the tangent vectors, we construct a numerically efficient update for the symplectic Cayley transform. In addition, we 
find that the Cayley transform can be interpreted as an instance of the trapezoidal rule for solving ODEs on quadratic Lie groups.

We develop and analyze Riemannian gradient algorithms that combine each of the two curve selection strategies with a non-monotone line search scheme.
We prove that the accumulation points of the sequences of iterates produced by 
the proposed
algorithms
are critical points of~\eqref{prob:original}. 

Note that all these
results subsume
the case of the symplectic group ($p=n$).
Along the way, we extend the convergence analysis of general non-monotone Riemannian gradient methods to the case of retractions that are not globally defined; see \cref{theorem:convergence}.

Numerical experiments investigate the impact of various algorithmic choices---curve selection strategy, Riemannian metric, line search scheme---on the convergence and feasibility of the iterates.    
Moreover, tests on various instances of~\eqref{prob:original}---nearest symplectic matrix problem, 
minimization {of} the Brockett cost function, and symplectic eigenvalue problem---illustrate the efficiency of the proposed algorithms.

The paper is organized as follows.  In \cref{sec:geometry}, we study the geometric structure of $\symplectic$.  The Riemannian geometry of $\symplectic$ endowed with the canonical-like metric is investigated in \cref{section:canonical}.
We construct two different curve selection strategies and
propose a Riemannian gradient framework with a non-monotone line search scheme in~\cref{sec:optimization}.
Numerical results on several problems are reported in \cref{sec:Numerical Experiment}. Conclusions are drawn in \cref{sec:conclusion}.

\section{Notation}
The Euclidean inner product of two matrices $X, Y\in \Rnnpp$ is denoted by $\jkh{X,Y}=\tr(X\zz Y)$, where $\tr(\cdot)$ denotes the trace of the matrix argument.
The Frobenius norm of $X$ is denoted by $\norm{X}\ff:=\sqrt{\jkh{X,X}}$. Given $A\in \R^{m\times m}$, $e^{A}$ or $\exp(A)$ represents the matrix exponential of $A$.  Moreover, 
$$\sym(A):=\frac{1}{2}(A+A\zz)\quad\mbox{and}\quad\skewsym(A):=\frac{1}{2}(A-A\zz)$$
stand for the symmetric part and the skew-symmetric part of $A$, respectively, and
$\det(A)$ denotes the determinant of $A$.
We let $\mathrm{diag}(v)\in\R^{m\times m}$ denote the diagonal matrix with the components of $v\in\R^m$ on the diagonal. We use $\spn(A)$ to express the subspace spanned by the columns 
of $A$. Furthermore, $\symset(n)$ and $\skewset(n)$ denote the sets of all symmetric and skew-symmetric $n\times n$ matrices, respectively. Let ${\cal E}_1$ and ${\cal E}_2$ be two finite-dimensional vector spaces over $\R$. The Fr\'{e}chet derivative of {a~map} $F: {\cal E}_1 \rightarrow {\cal E}_2$ at $X\in{\cal E}_1$ is the
linear operator 
$$
\mathrm{D}F(X): {\cal E}_1 \rightarrow {\cal E}_2 : Z \mapsto \mathrm{D}F(X)[Z]
$$ 
satisfying $F(X+Z)=F(X)+\mathrm{D}F(X)[Z]+o(\norm{Z})$.
The rank of $F$ at $X\in{\cal E}_1$, denoted by ${\rank(F)(X)}$, is the dimension of the range of $\mathrm{D}F(X)$. 
The domain of $F$ is denoted by $\dom(F)$.

\section{The symplectic Stiefel manifold}\label{sec:geometry}
In this section, we investigate the submanifold structure of $\symplectic$.

The matrix $J$ defined in \cref{sec:intro} satisfies the following properties
$$J\zz=-J,\quad J\zz J=I,\quad J^2=-I,\quad J\inv = J\zz,$$
{which imply} that $J$ is {skew-symmetric and} orthogonal. \cref{tab:notation}  collects
the notation and definition of several matrix manifolds that appear in this paper.

\begin{table}[htbp]
	\centering
	\small
	\caption{Notation for {matrix} manifolds\label{tab:notation}}
	\begin{tabular}{|lll|}
		\hline
		{\sc Space} & {\sc Symbol} & {\sc Element} \\[2mm]
		Orthogonal group & $\mathcal{O}(n)$  & $Q\in\R^{n\times n}: Q\zz Q=I_n$  \\[1mm]
		Stiefel manifold & $\stiefel$ & $V\in\Rnp: V\zz V=I_p$  \\[1mm]
		Symplectic group & $\symplecticgroup$  & $U\in\R^{2n\times 2n}: U\zz J_{2n} U=J_{2n}$  \\[1mm]
		Symplectic Stiefel manifold & $\symplectic$ & $X\in\Rnnpp: X\zz J_{2n} X=J_{2p}$  \\
		\hline
	\end{tabular}
\end{table}

First we show that  $\symplectic$ is an embedded submanifold of the Euclidean space $\Rnnpp$. 

\begin{proposition}\label{proposition:dim}
	The symplectic Stiefel manifold $\symplectic$ is a closed embedded submanifold of {the} Euclidean space $\Rnnpp$. Moreover, it has dimension $4np-p(2p-1)$.
\end{proposition}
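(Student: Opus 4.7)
The plan is to exhibit $\symplectic$ as the preimage of a regular value under a smooth map, and then invoke the standard submersion theorem. Specifically, I would define
\begin{equation*}
F:\Rnnpp \to \skewset(2p), \qquad F(X) := X\zz J_{2n} X,
\end{equation*}
and first verify that $F$ indeed lands in the skew-symmetric matrices, since $(X\zz J_{2n} X)\zz = X\zz J_{2n}\zz X = -X\zz J_{2n} X$. With this convention we have $\symplectic = F\inv(J_{2p})$, and continuity of $F$ immediately gives the claimed closedness.

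Next I would compute the Fr\'echet differential
\begin{equation*}
\mathrm{D}F(X)[Z] = Z\zz J_{2n} X + X\zz J_{2n} Z,
\end{equation*}
and establish that $\mathrm{D}F(X)$ is surjective onto $\skewset(2p)$ for every $X\in\symplectic$. The cleanest way I see is to test it only on directions of the form $Z = X B$ with $B\in\R^{2p\times 2p}$: using $X\zz J_{2n} X = J_{2p}$, such $Z$ yields
\begin{equation*}
\mathrm{D}F(X)[XB] = B\zz J_{2p} + J_{2p} B.
\end{equation*}
Given an arbitrary $S\in\skewset(2p)$, the explicit choice $B = -\tfrac{1}{2} J_{2p} S$ together with $J_{2p}\zz = -J_{2p}$ and $J_{2p}^2 = -I_{2p}$ is easily checked to satisfy $B\zz J_{2p} + J_{2p} B = S$. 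This is the step I expect to take the most care, because writing an inverse to $\mathrm{D}F(X)$ more naively (e.g.\ along $J_{2n}X$-directions) gets entangled with the non-orthogonality of $X$, whereas the ansatz $Z=XB$ exploits the constraint directly and reduces the surjectivity to a tiny algebraic identity.

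Once surjectivity is in hand, $J_{2p}$ is a regular value of $F$, so the submersion theorem shows that $\symplectic = F\inv(J_{2p})$ is an embedded submanifold of $\Rnnpp$. Finally, the dimension is
\begin{equation*}
\dimension \symplectic = \dimension \Rnnpp - \dimension \skewset(2p) = 4np - p(2p-1),
\end{equation*}
which completes the proof. No additional machinery beyond the constant rank / submersion theorem is needed, and the approach treats the $p<n$ and $p=n$ cases uniformly.
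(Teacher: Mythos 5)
Your proof is correct and is essentially the same as the paper's: the paper also proves surjectivity of the differential by exhibiting an explicit preimage $Z=\tfrac12 X J\zz\bar Z$, which, since $J\zz=-J$, is exactly your ansatz $Z=XB$ with $B=-\tfrac12 J\bar Z$; the only cosmetic difference is that the paper normalizes $F$ to $F(X)=X\zz JX - J$ so that $\symplectic=F\inv(0)$ rather than $F\inv(J_{2p})$.
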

\begin{proof}
	Consider the map
	\begin{equation}\label{eq:mapF}
	F:\Rnnpp \rightarrow \skewset(2p):
	X \mapsto X\zz JX-J.
	\end{equation}
We have
	\begin{equation}\label{eq:F-inv0}
	\symplectic=F\inv(0),
	\end{equation}
	which implies that $\symplectic$ is closed since it is the inverse image of the closed set $\hkh{0}$ under the continuous map $F$.
	
	Next, we prove that the rank of $F$ is $p(2p-1)$ at every point of $\symplectic$.
        Let $ X\in\symplectic$. 
        It is sufficient to show that $\mathrm{D}F(X)$ is a surjection, i.e., for all $\bar{Z}\in\skewset(2p)$, 
there exists $Z\in\Rnnpp$ such that	$\mathrm{D}F(X)[Z]=\bar{Z}$.
	We have
	\begin{equation}\label{eq:DF(X)Z}
		\mathrm{D}F(X)[Z]=X\zz JZ+Z\zz JX
	\end{equation}
	for all $Z\in\R^{2n\times 2p}$. {Let $\bar{Z}\in{\skewset(2p)}$. Then substituting $Z=\half XJ\zz \bar{Z}$ into the above equation}, we obtain 
	$\mathrm{D}F(X)[\half XJ\zz \bar{Z}]= \half X\zz JXJ\zz \bar{Z}+\half\bar{Z}\zz J X\zz JX=\half \bar{Z}-\half\bar{Z}\zz =\bar{Z}.$ 
	Hence $F$ has full rank, namely, ${\rank(F)(X)}=\dimension(\skewset(2p))=p(2p-1)$. Using \eqref{eq:F-inv0} and {the} submersion theorem \cite[Proposition 3.3.3]{absil2009optimization}, it follows that $\symplectic$ is a closed embedded submanifold of $\Rnnpp$. Its dimension is
	$\dimension(\symplectic)=\dimension(F\inv(0))=\dimension(\Rnnpp)-\dimension(\skewset(2p))=4np-p(2p-1).$
\end{proof}

Observe that the dimension of the symplectic Stiefel manifold $\symplectic$ is larger than the dimension of Stiefel manifold $\mathrm{St}(2p,2n)$, which is equal to $4np-p(2p+1)$. 
Another essential difference between $\symplectic$ and $\mathrm{St}(2p,2n)$ is that the symplectic Stiefel manifold is unbounded, hence noncompact. 
We show it for the simplest case, $p=n=1$. 
For $X=\fkh{\begin{smallmatrix}
	a & b\\
	c & d
	\end{smallmatrix}} \in \R^{2\times 2}$, we readily 
obtain that $X\in\mathrm{Sp}(2)$ if and only if $ad-bc=1$. 
Hence, $$\mathrm{Sp}(2)=\hkh{{\begin{bmatrix}
		a & b\\
		c & d
\end{bmatrix}} \in \R^{2\times 2} : ad-bc=1}$$ and it has dimension $3$. In particular, the matrix $\fkh{\begin{smallmatrix}
a & 0\\
0 & 1/a
\end{smallmatrix}}$ is symplectic for all {$a\in\R\setminus\{0\}$}, which implies that $\mathrm{Sp}(2)$ is unbounded. On the other hand, the orthogonal group $$\mathcal{O}(2)=\hkh{{\begin{bmatrix}
	\sin\theta & \cos\theta\\
	\cos\theta & -\sin\theta
	\end{bmatrix}}: \theta\in[0,2\pi]}$$
has dimension $1$ and is compact.

We now set the scene for the descriptions of the tangent space that will come in \cref{prop:tangent}. Given $X\in\symplectic$, we let $X_\perp\in\R^{2n\times (2n-2p)}$ be a full rank
matrix such that $\spn(X_{\perp})$ is the orthogonal complement of $\spn(X)$, and we let 
\begin{equation}\label{eq:E}
E := \left[XJ\,\,\, JX_{\perp}\right] \in\R^{2n\times 2n}.
\end{equation}
Note that $X_\perp$ is not assumed to be an orthonormal matrix. The next lemma gathers basic linear algebra results that will be useful later on.
\begin{lemma}\label{lemma:E}
The matrix $E = \left[XJ\,\,\, JX_{\perp}\right]$ defined in~\eqref{eq:E} has the following properties.
	\begin{enumerate}
		\item[(i)] $E$ 
                {is invertible;} 
		\item[(ii)] $E\zz JE=\fkh{\begin{smallmatrix}
			J & 0\\
			0 & X\zz_\perp J X_\perp
			\end{smallmatrix}}$ and $X_\perp\zz JX_\perp$ is invertible;
		\item[(iii)]  $E\inv=\fkh{\begin{smallmatrix}
			X\zz J\zz\\
			\dkh{X\zz_\perp J X_\perp}\inv X_\perp\zz
			\end{smallmatrix}}$;
		\item[(iv)]  {Every matrix} $Z\in\Rnnpp$ {can} be represented as $Z = E \fkh{\begin{smallmatrix} W \\ K \end{smallmatrix}}$, i.e.,
		\begin{align}\label{eq:tangent-decomp}
		Z=XJW+JX_\perp K,
		\end{align}
		where $W\in\R^{2p\times 2p}$ and $K\in\R^{(2n-2p)\times 2p}$. Moreover, we have 
		\begin{align}\label{eq:tangent-cood}
		W=X\zz J\zz Z, \quad K=\dkh{X\zz_\perp J X_\perp}\inv X\zz_\perp Z.
		\end{align}
	\end{enumerate}
\end{lemma}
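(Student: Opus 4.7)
The plan is to establish the four items in the order (ii)-then-(i), with (iii) following by direct verification and (iv) as an immediate corollary of (iii). First I would compute $E\zz J E$ block by block from \eqref{eq:E}. The upper-left block is $J\zz X\zz J X J$, which collapses via the symplectic identity $X\zz J X = J$ together with $J\zz J = I$ and $J^2 = -I$ to $J$. The two off-diagonal blocks reduce using $J\zz J = I$ and the Euclidean orthogonality $X\zz X_\perp = 0$ built into the definition of $X_\perp$, producing zero blocks. The lower-right block is immediate. This yields the block-diagonal structure claimed in (ii), though not yet the invertibility of $X_\perp\zz J X_\perp$.

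To obtain (i), I would show $\ker(E)=\hkh{0}$ directly. If $E \fkh{\begin{smallmatrix} a \\ b \end{smallmatrix}} = 0$, then $XJa = -JX_\perp b$. Left-multiplying by $X\zz J$ and using $X\zz J X = J$, $J^2 = -I$, and $X\zz X_\perp = 0$ forces $a=0$. Then $JX_\perp b = 0$, and because $J$ is invertible and $X_\perp$ has full column rank, $b=0$. Hence $E$ is invertible; combined with the invertibility of $J$ and the block-diagonal expression for $E\zz J E$, both diagonal blocks, and in particular $X_\perp\zz J X_\perp$, must be invertible, which completes (ii).

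Part (iii) is then a routine verification: multiplying the claimed expression for $E\inv$ on the right by $E$ and reducing each of the four resulting blocks using only $X\zz J X = J$, $J\zz J = I$, and $X\zz X_\perp = 0$ gives $I_{2n}$. Finally, (iv) is immediate from (iii): every $Z\in\Rnnpp$ admits the representation $Z = E(E\inv Z)$, so reading off the stacked components of $E\inv Z$ from the formula in (iii) yields both the existence of the decomposition \eqref{eq:tangent-decomp} and the explicit coordinate formulas \eqref{eq:tangent-cood}. The only genuine obstacle is careful sign book-keeping arising from $J\zz = -J$ and $J^2 = -I$, so that the cross terms in $E\zz J E$ cancel cleanly and the verification of $E\inv$ produces $I$ rather than $-I$. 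Apart from this, the proof is essentially the observation that symplecticity of $X$, skew-symmetry of $J$, and Euclidean orthogonality of $X_\perp$ interlock to diagonalize $E\zz J E$.
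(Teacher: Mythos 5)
Your proposal is correct and follows essentially the same route as the paper: the kernel argument for invertibility of $E$, the block computation of $E\zz JE$, and the deduction of invertibility of $X_\perp\zz JX_\perp$ from that of $E$ are all identical in substance. The only differences are cosmetic: you establish the block-diagonal identity before the kernel argument (the paper does (i) then (ii), but the two steps are logically independent), and you \emph{verify} the formula for $E\inv$ by multiplication rather than \emph{deriving} it from $E\inv = \bigl[\begin{smallmatrix}J & 0\\ 0 & X_\perp\zz JX_\perp\end{smallmatrix}\bigr]\inv E\zz J$ as the paper does; both are equally valid, and (iv) then follows from (iii) exactly as you say.
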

\begin{proof}
	(i) 
Suppose $E \fkh{\begin{smallmatrix} y_1 \\ y_2 \end{smallmatrix}} = 0$. Multiplying this equation from the left by $X\zz J$ yields $y_1=0$. The equation thus reduces to $J X_\perp y_2 = 0$, which yields $y_2=0$ since $J X_\perp$ has full {column} rank. This shows that $E$ has full rank.
	
	(ii) By using $X\zz J X=J$ and $X\zz X_\perp=0$, we have $E\zz JE=\fkh{\begin{smallmatrix}
		J & 0\\
		0 & X\zz_\perp J X_\perp
		\end{smallmatrix}}$. From~(i), we know that $E$ is invertible, hence $E\zz JE$ is invertible, and so is $X\zz_\perp J X_\perp$.
	
	(iii) From (ii), we have $E\inv=\fkh{\begin{smallmatrix}
		J & 0\\
		0 & X\zz_\perp J X_\perp
		\end{smallmatrix}}\inv E\zz J$, and the result follows.
	
	(iv) The first claim follows from the invertibility of $E$. Using (iii), we have 
		\begin{align*}
			\begin{bmatrix}
			W\\
			K
			\end{bmatrix}=E\inv Z=\begin{bmatrix}
			X\zz J\zz Z\\
			\dkh{X\zz_\perp J X_\perp}\inv X\zz_\perp Z
			\end{bmatrix}.
		\end{align*}
\end{proof}

Given $X\in\symplectic$, there are infinitely many possible choices of $X_\perp$. The choice of $X_\perp$ affects $E$ in~\eqref{eq:E} and $K$ in the decomposition \eqref{eq:tangent-decomp} of $Z$. However, it does not affect $JX_\perp K$. In fact,  it follows from (iii) in \cref{lemma:E} that
\begin{equation}  \label{eq:JXpK-1}
I=EE\inv=XJX\zz J\zz + J X_\perp \dkh{X\zz_\perp J X_\perp}\inv X\zz_\perp,
\end{equation}
which further implies that $JX_\perp K =  J X_\perp \dkh{X\zz_\perp J X_\perp}\inv X\zz_\perp Z =  (I-XJX\zz J\zz) Z$, where we used the expression of $K$ in~\eqref{eq:tangent-cood}.

The tangent space of the symplectic Stiefel manifold at $X\in\symplectic$, denoted by $\TX$, is defined by 
$$\TX:=\hkh{\gamma'(0): \gamma(t) \mbox{~is a smooth curve in~} \symplectic \mbox{~with~} \gamma(0)=X}.$$ 
The next result gives an implicit form and two explicit forms of $\TX$.
\begin{proposition}\label{prop:tangent}
	Given $X\in\symplectic$, the tangent space of $\symplectic$ at $X$ admits
	the following expressions
	\begin{subequations}
		\begin{align}
		\TX&=\{Z\in \Rnnpp: Z\zz  J  X + X\zz J  Z=0\} \label{eq:tangent-1}\\
		&= \{XJW+JX_\perp K: W\in\symset(2p), K\in\R^{(2n-2p)\times 2p}\} \label{eq:tangent-2}\\
		&= \{SJX: S\in\symset(2n)\}. \label{eq:tangent-3}
		\end{align}
	\end{subequations}
\end{proposition}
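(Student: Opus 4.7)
The plan is to prove the three characterizations in order, using the results already established in the excerpt.

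For the implicit form (3.7a), the natural approach is to invoke the submersion theorem: since Proposition 3.1 shows that $F(X)=X\zz JX-J$ is a submersion at every $X\in\symplectic$, the tangent space equals $\ker \mathrm{D}F(X)$. Combining this with the formula $\mathrm{D}F(X)[Z]=X\zz JZ+Z\zz JX$ computed in (3.4) immediately yields (3.7a), so no new work is required here.

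For the first explicit form (3.7b), I would start from (3.7a) and use Lemma 3.2(iv) to decompose any $Z\in\Rnnpp$ uniquely as $Z=XJW+JX_\perp K$ with $W\in\R^{2p\times 2p}$ and $K\in\R^{(2n-2p)\times 2p}$ unconstrained. Plugging this decomposition into $X\zz JZ$ and using the identities $X\zz JX=J$, $X\zz X_\perp=0$, and $J^2=-I$ should collapse everything to $X\zz JZ=-W$, from which $Z\zz JX=-W\zz$ by transposition. Hence the defining equation of (3.7a) becomes simply $W\zz=W$, i.e., the $W$ block must be symmetric while $K$ remains arbitrary. This is precisely (3.7b).

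For the parametrization (3.7c), the inclusion $\{SJX:S\in\symset(2n)\}\subseteq\TX$ is a short computation: for $S=S\zz$, $X\zz J(SJX)+(SJX)\zz JX=X\zz JSJX-X\zz JSJX=0$ using $J\zz=-J$ and $S\zz=S$, so it lies in (3.7a). For the reverse inclusion I see two routes, and I would favor the constructive one since it also exposes the freedom in the choice of $S$. Given $Z=XJW+JX_\perp K$ with $W\in\symset(2p)$ supplied by (3.7b), I would split $S=S_1+S_2$ and search for $S_1=-XJWJX\zz$ (which is symmetric by $W=W\zz$ and $J\zz=-J$, and satisfies $S_1 JX=XJW$ by direct computation using $X\zz JX=J$), plus a symmetric correction of the form $S_2=JX_\perp K M X\zz+(JX_\perp K M X\zz)\zz$ with $M$ chosen so that $S_2 JX=JX_\perp K$; the calculation $S_2JX=JX_\perp K M J$ (the cross term vanishes by $X_\perp\zz X=0$) forces $M=-J$, giving a valid $S$.

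The main obstacle is the surjectivity step for (3.7c): the map $S\mapsto SJX$ from $\symset(2n)$ is far from injective, so a naive pseudo-inverse approach does not preserve symmetry. The ansatz above sidesteps this by exploiting the block structure provided by Lemma 3.2(iv). As a sanity check, one could alternatively argue by dimensions: the kernel of $S\mapsto SJX$ consists of symmetric $S$ annihilating $\spn(JX)$, a space of dimension $(2n-2p)(2n-2p+1)/2$, so the image has dimension $n(2n+1)-(2n-2p)(2n-2p+1)/2=4np-p(2p-1)=\dim(\symplectic)$, matching Proposition 3.1 and confirming equality.
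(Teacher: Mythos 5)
Your proof is correct. Parts (3.7a) and (3.7b) follow the same route as the paper: kernel of $\mathrm{D}F(X)$ via the submersion theorem, then the decomposition from Lemma 3.2(iv) reducing the tangency condition to $W=W\zz$. The genuine divergence is in (3.7c), where the paper establishes the reverse inclusion purely by a dimension count — changing variables with $P=[JX~X_\perp]$, pushing $\symset(2n)$ through the congruence $S\mapsto P\zz S P$, and matching the resulting dimension to $\dim\symplectic$ — whereas you give a constructive proof, building an explicit symmetric $S=S_1+S_2$ with $S_1=-XJWJX\zz$ and $S_2=-JX_\perp KJX\zz+(-JX_\perp KJX\zz)\zz$ satisfying $SJX=Z$. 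Your computations check out ($S_1JX=-XJWJX\zz JX=-XJWJ\cdot J=XJW$, and the cross term in $S_2JX$ vanishes because $X_\perp\zz J\zz JX=X_\perp\zz X=0$, forcing $M=J\inv=-J$). The constructive route buys you explicit low-rank structure for $S$ — indeed it is equivalent (with $L=\tfrac12 XJW+JX_\perp K$) to the expression $S=L(XJ)\zz+XJL\zz$ that the paper later isolates in equation (3.5) and exploits in Corollary 3.6 and the Cayley retraction — whereas the paper's dimension argument is shorter but less informative. Your dimension-counting sanity check is also valid and is essentially a streamlined version of the paper's argument that avoids introducing $P$ explicitly by computing the kernel of $S\mapsto SJX$ directly.
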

\begin{proof}
	Let $F$ be as in {\eqref{eq:mapF}}. 
		According to \eqref{eq:DF(X)Z}, we observe that the right-hand side of \eqref{eq:tangent-1} is the null space of $\mathrm{D}F(X)$, namely, $\{Z\in\mathbb{R}^{2n\times 2p}: \mathrm{D}F(X)[Z]=0\}$. It follows from \cite[(3.19)]{absil2009optimization} that it coincides with $\TX$.
	
	Using \eqref{eq:tangent-decomp}, relation \eqref{eq:tangent-1} is equivalent to $W=W\zz $, which yields~\eqref{eq:tangent-2}.
	
	Finally, we prove that the form \eqref{eq:tangent-1} is equivalent to \eqref{eq:tangent-3}. We readily obtain 
	\begin{equation}\label{eq:tangent-relation}
	\{SJX: S\in\symset(2n)\}  \subset \{Z\in \Rnnpp: Z\zz  J  X + X\zz J  Z=0\}.
	\end{equation}
	Since both sets are linear subspaces of $\Rnnpp$, it remains to show that they have the same dimension in order to conclude that they are equal. The dimension of the right-hand set is the dimension of $\TX$, which is the dimension of $\symplectic$, i.e., $4np- p(2p-1)$; see~\cref{proposition:dim}. As for the dimension of the left-hand set, choose $X_\perp$ as above and observe that $P := JE\fkh{\begin{smallmatrix}  J\zz & 0 \\ 0 & -I  \end{smallmatrix}} =[JX~X_\perp]\in\R^{2n\times 2n}$ is invertible.  Let 
	$B:= P\zz S P=\left[
	\begin{smallmatrix}
	B_{11} &B_{21}\zz\\
	B_{21} &B_{22}\\
	\end{smallmatrix}
	\right]$. 
	Then we have
	\begin{align*}
	&	 \dimension\{SJX: S\in\symset(2n)\} = \dimension\{P\zz S JX: S\in\symset(2n)\} \\
	& \qquad = \dimension\{P\zz SP \left[
	\begin{smallmatrix}
	I_{2p}\\
	0
	\end{smallmatrix}
	\right] : S\in\symset(2n)\} = \dimension\{B\left[
	\begin{smallmatrix}
	I_{2p}\\
	0\\
	\end{smallmatrix}
	\right]: B\in\symset(2n)\} \\
	& \qquad = \dimension\{\left[
	\begin{smallmatrix}
	B_{11}\\
	{B_{21}}\\
	\end{smallmatrix}
	\right]: B_{11}\in\symset(2p), B_{21}\in\R^{(2n-2p)\times2p}\} \\
	& \qquad	= {p(2p+1)} +(2n-2p)2p = 4np-p(2p-1),
	\end{align*}
    yielding the equality of dimensions that concludes the proof. 
	The first equality follows from $\dimension({P\mathcal{S}})=\dimension(\mathcal{S})$ for any subspace $\mathcal{S}\subset \R^{2n\times 2p}$ and invertible matrix $P$. The second equality follows from $JX=P\left[
	\begin{smallmatrix}
	I_{2p}\\
	0\\
	\end{smallmatrix}
	\right]$. 
The third equality is due to $P\zz \symset(2n) P = \mathcal{S}_{sym}(2n)$, and the last equality comes from the fact $\dimension(\symset(2p))=p(2p+1)$. 
\end{proof}

In fact, {it follows} from the proof above {that} the derivation of \eqref{eq:tangent-3} also works for $\{JSX: S\in\symset(2n)\}$, namely,
$\TX=\{JSX: S\in\symset(2n)\},$
but we will restrict to \eqref{eq:tangent-3} in the following.

Unlike~\eqref{eq:tangent-2}, when $p<n$, \eqref{eq:tangent-3} is an over-parameterization,
i.e., $\dimension(\symset(2n)) > \dimension(\TX)$. However, $S$ can be chosen with a low-rank structure.
Indeed, letting $P := JE\fkh{\begin{smallmatrix}  J\zz & 0 \\ 0 & -I  \end{smallmatrix}} =[JX~X_\perp]$ and $B := P\zz SP$, we have that $SJX = P^{-\top}BP\inv JX = P^{-\top}B\fkh{\begin{smallmatrix}
	I_{2p}\\
	0
\end{smallmatrix}}$, which shows that only the first $2p$ columns of the symmetric matrix $B$ have an impact on $SJX$. Hence there is no restriction on $SJX$, $S\in\symset(2n)$, if we restrict $B$ to the form $B = M \fkh{\begin{smallmatrix} I_{2p} & 0\end{smallmatrix}} + \fkh{\begin{smallmatrix}I_{2p} \\ 0 \end{smallmatrix}} M\zz$, with $M\in\R^{2n\times 2p}$, or equivalently, if we restrict $S$ to the form 
\begin{equation}  \label{eq:S-rank2p}
S=P^{-\top}BP\inv =L(XJ)\zz+XJL\zz=\begin{bmatrix}
L & XJ
\end{bmatrix}
\begin{bmatrix}
(XJ)\zz \\ L\zz
\end{bmatrix},
\end{equation}
where $L=P^{-\top}M \in\R^{2n\times 2p}$ and the second equality follows from \cref{lemma:E}.  Note that such $S$ has rank at most $4p$. This low-rank structure will have a crucial impact on the computational cost of the Cayley retraction introduced in \cref{subsec:cayley}.

\section{{The canonical-like metric}}\label{section:canonical}
Given an objective function $f:\symplectic\to\R$, finding the steepest-descent direction at a point $X\in\symplectic$ amounts to finding $Z\in\TX$ subject to $\norm{Z}_X=1$ that minimizes $\mathrm{D}f(X)[Z]$. In order to define $\norm{\cdot}_X$, it is customary to endow $\TX$ with an inner product $\rmetric{\cdot,\cdot}$
that depends smootly on~$X$; $\norm{\cdot}_X$ is then the norm induced by the inner product. Such an inner product is termed a Riemanian metric---\emph{metric} for short---and
turns $\symplectic$ into a Riemannian manifold. In~this section, we propose a class of metrics on $\symplectic$ that are inspired from the canonical metric on the Stiefel manifold. Then we work out formulas for the normal space and gradient, and we propose a class of curves that we term quasi-geodesics.

For the Stiefel manifold $\stiefel$, any tangent vector at $V\in\stiefel$ has a unique expression $\Delta=VA+V_\perp B$, where $V_\perp$ satisfies $V_\perp^\top V=0$ and $V_\perp^\top V_\perp^{}=I$, $A\in\skewset(p)$ and $B\in\R^{(n-p)\times p}$, see \cite{edelman1998geometry}. 
The canonical metric \cite[(2.39)]{edelman1998geometry} on $\stiefel$ is defined as 
$$
g_c(\Delta_1,\Delta_2)=\tr\dkh{\Delta_1\zz(I-\half VV\zz)\Delta_2}= \half\tr(A_1\zz A_2)+\tr(B_1\zz B_2)
$$ 
for $\Delta_i= VA_i+V_\perp B_i$, $i=1,2$. 

By using the tangent vector representation in \eqref{eq:tangent-2}, we develop a similar metric for the symplectic Stiefel manifold $\symplectic$.
We choose the inner product 
on $\TX$ to be
\begin{align}  \label{eq:rmetric}
	g_{\rho,X_\perp}(Z_1,Z_2) &\equiv \rmetric{Z_1, Z_2} :={\frac{1}{\rho}\, \tr}(W_1\zz W_2)+\tr(K_1\zz K_2),
\end{align}
where $\rho>0$ is a parameter and $W_i\in\symset(2p)$ and $K_i\in\R^{(2n-2p)\times 2p}$ are obtained from $Z_i$ as in \eqref{eq:tangent-cood}, i.e., $\fkh{\begin{smallmatrix}
	W_i\\
	K_i
	\end{smallmatrix}}=E\inv Z_i $ for $i=1,2$.
Hence, we have
\begin{align*}
	g_{\rho,X_\perp}(Z_1,Z_2) &= \tr\dkh{\begin{bmatrix}
			W_1\\
			K_1
			\end{bmatrix}\zz \begin{bmatrix}
			\frac{1}{\rho} I & 0\\
			0 & I
			\end{bmatrix} \begin{bmatrix}
			W_2\\
			K_2
			\end{bmatrix} 	}
	= \tr\dkh{(E\inv Z_1)\zz \begin{bmatrix}
			\frac{1}{\rho} I & 0\\
			0 & I
			\end{bmatrix} E\inv Z_2}\\
	&= \tr\dkh{Z_1\zz E^{-\top} \begin{bmatrix}
			\frac{1}{\rho} I & 0\\
			0 & I
			\end{bmatrix} E\inv Z_2 } = \tr(Z_1\zz B_X Z_2),
\end{align*}
where
\begin{align}\label{eq:BX}
	B_X := E^{-\top} {\begin{bmatrix}
		\frac{1}{\rho} I & 0\\
		0 & I
		\end{bmatrix}} E\inv=\frac{1}{\rho} JXX\zz J\zz -X_\perp (X\zz_\perp J X_\perp)^{-2}X\zz_\perp.
\end{align}
The last expression of $B_X$ follows from (iii) in \cref{lemma:E}. In view of its definition, $B_X$ is positive definite; this confirms that~\eqref{eq:rmetric} is a bona-fide inner product. The expressions of $B_X$ also confirm that $g_{\rho,X_\perp}$ depend on $\rho$ and $X_\perp$. There is however an invariance: $g_{\rho,X_\perp Q} = g_{\rho,X_\perp}$ for all $Q\in\mathcal{O}(2n-2p)$. 

In order to make $g_{\rho,X_\perp}$ a bona-fide Riemannian metric, it remains to choose the ``$X_\perp$ map'' $\symplectic\ni X\mapsto X_\perp$ in such a way that $B_X$ smoothly depends on $X$. Choosing a smooth $X\mapsto X_\perp$ map would be sufficient, but it is unknown whether such a smooth map globally exists. However, if orthonormalization conditions are imposed on $X_\perp$ such that the $X_\perp$-term of $B_X$ can be rewritten as a smooth expression of $X\in\symplectic$, then $g_{\rho,X_\perp}$ becomes a bona-fide Riemannian metric, which we term \emph{canonical-like metric}. We will consider the following two such orthonormalization conditions on $X_\perp$, which are readily seen to be achievable since the set of all admissible $X_\perp$ matrices has the form $\{X_\perp M: M\in\R^{(2n-2p)\times (2n-2p)} \text{~{is invertible}}\}$:
	\begin{itemize}
		\item[(I)] $X_\perp$ is orthonormal.
		\item[(II)] $X_\perp (X_\perp\zz JX_\perp)^{-1}$ is orthonormal.
	\end{itemize}

The announced smooth expressions of $B_X$ are given next.
\begin{proposition}
Under the orthonormalization condition (I), the $X_\perp$-term of $B_X$ in~\eqref{eq:BX}, namely, $-X_\perp (X_\perp\zz J X_\perp)^{-2} X_\perp\zz$, is equal to $-(JXJX\zz J\zz -J)^2$. Under the orthonormalization condition (II), it is equal to $I - X(X\zz X)^{-1} X\zz$. 
\end{proposition}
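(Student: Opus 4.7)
The plan is to reduce both identities to two elementary observations: (a) the key identity \eqref{eq:JXpK-1}, namely $I = XJX\zz J\zz + J X_\perp (X_\perp\zz J X_\perp)^{-1} X_\perp\zz$, gives a closed form for $X_\perp (X_\perp\zz J X_\perp)^{-1} X_\perp\zz$ after left-multiplication by $J\zz=-J$; and (b) the inverse of a skew-symmetric matrix is skew-symmetric, so $(X_\perp\zz J X_\perp)^{-\top} = -(X_\perp\zz J X_\perp)^{-1}$.

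For condition (I), I would first apply $J\zz$ on the left of \eqref{eq:JXpK-1} to obtain the identity
\begin{equation*}
X_\perp (X_\perp\zz J X_\perp)^{-1} X_\perp\zz = J X X\zz J\zz J\zz - J\zz \cdot J = JXJX\zz J\zz - J,
\end{equation*}
using only $J\zz = -J$ and $J\zz J = I$. Squaring both sides and inserting $X_\perp\zz X_\perp = I$ (condition (I)) in the middle collapses the left-hand side to $X_\perp (X_\perp\zz J X_\perp)^{-2} X_\perp\zz$, so the $X_\perp$-term of $B_X$ is $-(JXJX\zz J\zz - J)^2$, as claimed.

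For condition (II), set $Y := X_\perp (X_\perp\zz J X_\perp)^{-1}$, so by assumption $Y\zz Y = I$. Using observation (b), $Y\zz = -(X_\perp\zz J X_\perp)^{-1} X_\perp\zz$, hence
\begin{equation*}
-X_\perp (X_\perp\zz J X_\perp)^{-2} X_\perp\zz
 = Y\bigl(-(X_\perp\zz J X_\perp)^{-1} X_\perp\zz\bigr) = Y Y\zz.
\end{equation*}
Because $Y\zz Y = I$, the matrix $Y Y\zz$ is the orthogonal projector onto $\range(Y)$; since $(X_\perp\zz J X_\perp)^{-1}$ is invertible, $\range(Y) = \range(X_\perp)$, which by the defining property of $X_\perp$ is the orthogonal complement of $\range(X)$. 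The orthogonal projector onto that complement is $I - X(X\zz X)^{-1} X\zz$, giving the desired formula.

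There is no real obstacle: the sole subtlety is keeping track of signs via $J\zz=-J$ and the skew-symmetry of $(X_\perp\zz J X_\perp)^{-1}$ when forming $Y Y\zz$; the apparent miracle that the $X_\perp$-dependent quantity has a smooth, $X_\perp$-free expression is simply a manifestation of the fact that each normalization pins down an object (the projector onto $\range(X)^\perp$, respectively the rank-one correction $-(JXJX\zz J\zz - J)^2$) that can be written purely in terms of $X$.
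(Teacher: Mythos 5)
Your proof is correct and follows essentially the same approach as the paper: derive $X_\perp (X_\perp\zz J X_\perp)^{-1} X_\perp\zz = JXJX\zz J\zz - J$ from \eqref{eq:JXpK-1} and use $X_\perp\zz X_\perp = I$ to collapse the square for (I), and use the orthonormality of $Y := X_\perp(X_\perp\zz J X_\perp)^{-1}$ together with the skew-symmetry of $(X_\perp\zz J X_\perp)^{-1}$ to identify the $X_\perp$-term as the projector onto $\range(X_\perp) = \range(X)^\perp$ for (II). One small slip: your intermediate expression $JXX\zz J\zz J\zz - J\zz J$ in part (I) does not simplify to $JXJX\zz J\zz - J$ as written (it should read $J\zz - J\zz X J X\zz J\zz$ after moving the $X_\perp$-term across, or more cleanly one can left-multiply \eqref{eq:JXpK-1} by $J$ and rearrange), but the stated identity and the rest of the argument are correct.
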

\begin{proof}
For the first claim, since $X_\perp$ is restricted to be orthonormal, it follows that
$$X_\perp (X_\perp\zz J X_\perp)^{-2} X_\perp\zz = X_\perp (X_\perp\zz J X_\perp)^{-1} X_\perp\zz X_\perp (X_\perp\zz J X_\perp)^{-1} X_\perp\zz = (JXJX\zz J\zz -J)^2.$$
The last equality is due to the fact $X_\perp (X_\perp\zz J X_\perp)^{-1} X_\perp\zz = JXJX\zz J\zz -J$ which is readily derived from \eqref{eq:JXpK-1}. For the second claim, since $X_\perp (X_\perp\zz J X_\perp)^{-1}$ is now restricted to be orthonormal, we have
that $(X_\perp\zz J X_\perp)^{-\top} X_\perp\zz X_\perp (X_\perp\zz J X_\perp)^{-1}=I$, which yields $(X_\perp\zz J X_\perp)^{2}=-X_\perp\zz X_\perp$. Consequently, we obtain
that 
$$X_\perp (X_\perp\zz J X_\perp)^{-2} X_\perp\zz=-X_\perp (X_\perp\zz X_\perp)^{-1} X_\perp\zz= X(X\zz X)^{-1} X\zz - I,$$
where the last equality can be checked by observing that multiplying each side by the invertible matrix $[X~X_\perp]$ yields $-[0~X_\perp]$.
\end{proof}

We will use $g_\rho$ to denote $g_{\rho,X_\perp}$ if there is no confusion. 

In the particular case $p=n$, where $\symplectic$ reduces to the symplectic group $\symplecticgroup$, the $K$-term in~\eqref{eq:tangent-2} disappears. Further choose $\rho=1$. Then, for all $U\in\symplecticgroup$, we have $B_U=JUU\zz J\zz$. Since $U\inv=J U\zz J\zz$, we also have $B_U=U^{-\top} U\inv$, and the canonical-like metric reduces to 
$\jkh{Z_1, Z_2}_U :=\tr\dkh{Z_1\zz B_U Z_2}=\jkh{U\inv Z_1, U\inv Z_2},$
which is the left-invariant metric used in~\cite{wang2018riemannian,birtea2018optimization}. 

\subsection{{Normal space and projections}}
The symmetric matrix $B_X$ in~\eqref{eq:BX} is positive definite if and only if $X\in\Rnnpp_\star := \{X\in\Rnnpp: \det(X\zz J X)\neq0\}$. Hence the Riemannian metric $g_{\rho}$ defined in \eqref{eq:rmetric}, extended to $\Rnnpp_\star$, remains a Riemannian metric, which we also denote by $g_{\rho}$. In this section, we give an expression for the normal space of $\symplectic$ viewed as a submanifold of $(\Rnnpp_\star,g_\rho)$, and we obtain expression for the projections onto the tangent and normal spaces. This will be instrumental in the expression of the gradient in \cref{subsubsec:rgrad}. 


The normal space at $X\in\symplectic$ with respect to $g_\rho$ is defined as
$$\NX:=\hkh{N\in\Rnnpp: g_{\rho}(N,Z)=0 \mbox{~for all~} Z\in\TX},$$
where we have used the fact that $\mathrm{T}_X \Rnnpp_\star \simeq \mathrm{T}_X \Rnnpp \simeq \Rnnpp$.
\begin{proposition}\label{proposition:normal}
	Given $X\in\symplectic$, we have
	\begin{equation}  \label{eq:normal}
	\NX = \hkh{XJ\varOmega: \varOmega\in\skewset(2p)}.
	\end{equation}
\end{proposition}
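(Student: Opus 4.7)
The plan is to prove the two inclusions by first verifying that every $XJ\Omega$ with $\Omega\in\skewset(2p)$ is orthogonal to $\TX$ with respect to $g_\rho$, and then matching dimensions.

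For the inclusion $\supset$, I would exploit the coordinate decomposition from \cref{lemma:E}(iv), which holds for every matrix in $\Rnnpp$, not only tangent ones. Applying it to $N=XJ\Omega$ gives the coordinates $W_N=X\zz J\zz N=\Omega$ (using $X\zz JX=J$ and $J\zz J=I$) and $K_N=(X_\perp\zz JX_\perp)^{-1}X_\perp\zz N=0$ (using $X_\perp\zz X=0$). By \cref{prop:tangent} any $Z\in\TX$ decomposes as $Z=XJW+JX_\perp K$ with $W\in\symset(2p)$, so the defining formula \eqref{eq:rmetric} yields
\[
g_\rho(N,Z)=\tfrac{1}{\rho}\tr(\Omega\zz W)+\tr(0\zz K)=\tfrac{1}{\rho}\tr(\Omega\zz W).
\]
Since $\Omega$ is skew-symmetric and $W$ is symmetric, $\tr(\Omega\zz W)=-\tr(\Omega W)=0$ by the standard argument that the Euclidean inner product of a skew and a symmetric matrix vanishes. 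Hence $N\in\NX$.

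For the reverse inclusion, I would invoke a dimension count rather than a direct characterization. The map $\Phi:\skewset(2p)\to\Rnnpp:\Omega\mapsto XJ\Omega$ is linear, and it is injective because $XJ\Omega=0$ implies, after left multiplication by $X\zz J\zz$ and use of $X\zz J\zz XJ=I$, that $\Omega=0$. Therefore the right-hand side of \eqref{eq:normal} is a linear subspace of dimension $\dim(\skewset(2p))=p(2p-1)$. On the other hand, since $g_\rho$ is a bona-fide inner product on $\Rnnpp$ (the matrix $B_X$ being positive definite), one has the orthogonal splitting $\Rnnpp=\TX\oplus\NX$, and thus
\[
\dim(\NX)=\dim(\Rnnpp)-\dim(\TX)=4np-\bigl(4np-p(2p-1)\bigr)=p(2p-1)
\]
by \cref{proposition:dim}. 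Equality of dimensions together with the inclusion established above forces $\NX=\{XJ\Omega:\Omega\in\skewset(2p)\}$.

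No step is particularly delicate: the computation of the coordinates of $N=XJ\Omega$ is a one-line application of \cref{lemma:E}, and the dimension count is immediate once one recognizes that $\Phi$ is injective. The only point requiring a moment of care is making sure the coordinate-based formula \eqref{eq:rmetric} may be applied with the first argument being an arbitrary element of $\Rnnpp$ and not only a tangent vector—this is legitimate because the decomposition in \cref{lemma:E}(iv) is stated for every $Z\in\Rnnpp$, and because the skew-symmetric coordinate $\Omega$ of $N$ is paired with the symmetric coordinate $W$ of $Z$, which is exactly what kills the inner product.
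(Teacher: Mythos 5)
Your proof is correct, but the route differs from the paper's in a way worth noting. The paper's proof starts from an arbitrary $N\in\Rnnpp$, writes it as $N=XJ\varOmega+JX_\perp K_N$ with $\varOmega\in\R^{2p\times2p}$ and $K_N\in\R^{(2n-2p)\times2p}$ unconstrained, expresses the normal-space condition $g_\rho(N,Z)=0$ for all $Z\in\TX$ in coordinates as $\tfrac{1}{\rho}\tr(\varOmega\zz W)+\tr(K_N\zz K_Z)=0$ for all $W\in\symset(2p)$ and $K_Z\in\R^{(2n-2p)\times2p}$, and reads off that this forces $\varOmega\in\skewset(2p)$ and $K_N=0$; both inclusions drop out simultaneously. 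You instead verify only the inclusion $\supset$ by a direct computation of the coordinates of $N=XJ\varOmega$, and then close the gap with a dimension count: $\Phi:\varOmega\mapsto XJ\varOmega$ is injective, so the candidate set has dimension $p(2p-1)$, which matches $\dim(\NX)=\dim(\Rnnpp)-\dim(\TX)$ from \cref{proposition:dim}. Both arguments rest on the same decomposition from \cref{lemma:E}(iv) and the same orthogonality of $\symset(2p)$ and $\skewset(2p)$; the paper's is slightly more informative in that it characterizes the normal space constructively and never needs the global splitting $\Rnnpp=\TX\oplus\NX$, while yours trades a bit of that information for a shorter second half that never has to argue about what $K_N$ must be. Your remark that \eqref{eq:rmetric} legitimately applies to non-tangent first arguments is exactly the right point of care, and is also implicitly used in the paper's version.
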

\begin{proof}
Using \cref{lemma:E} and \eqref{eq:tangent-2},
we have that $N\in\Rnnpp$
and $Z\in\TX$ if and only if 
	\begin{align*}
		N&=XJ\varOmega+JX_\perp K_N, \quad \varOmega\in\R^{2p\times 2p}, \quad K_N\in\R^{(2n-2p)\times 2p},\\ 
		Z&=XJW+JX_\perp K_Z, \quad W\in\symset(2p), \quad K_Z\in\R^{(2n-2p)\times 2p}.
	\end{align*}
The normal space condition, $g_{\rho}(N,Z)=0$ for all $Z\in\TX$, is thus equivalent to 
	 \begin{eqnarray*}
	 	\frac{1}{\rho} \tr(\varOmega\zz W)+\tr(K_N\zz K_Z) = 0 \quad \text{for all $W\in\symset(2p)$ and $K_Z\in\R^{(2n-2p)\times 2p}$}.
	 \end{eqnarray*}
In turn, this equation is equivalent to $\varOmega\in\skewset(2p)$ (since $\skewset(2p)$ is the orthogonal complement of $\symset(2p)$ with respect to the Euclidean inner product) and $K_N=0$. 
\end{proof}

Every $Y\in\Rnnpp$ admits a decomposition $Y=\proj(Y)+\projn(Y)$, where $\proj$ and $\projn$ denote the orthogonal (in the sense of $g_\rho$) projections onto the tangent and normal space, respectively. Next, we derive explicit expressions of $\proj$---in the forms~\eqref{eq:tangent-2} and~\eqref{eq:tangent-3}---and $\projn$. 
\begin{proposition}\label{proposition:projection}
	Given $X\in\symplectic$ and $Y\in\Rnnpp$, the orthogonal projection onto $\TX$ and $\NX$ of $Y$ with respect to the metric $g_{\rho}$ has the following expressions
		\begin{align} \label{eq:project-tangent} 
		\proj(Y) &=  XJ\sym(X\zz J\zz Y)+(I- XJX\zz J\zz)Y \stepcounter{equation}\tag{\theequation a}\\
		&= S_{X,Y}JX, \label{eq:pj-tg-S} \tag{\theequation b}\\
		\projn(Y) &=  XJ\skewsym(X\zz J\zz Y),  \label{eq:project-normal}
		\end{align}
where $S_{X,Y} = G_XY(XJ)\zz + XJ (G_XY)\zz$ and $G_X = I-\frac12 XJX\zz J\zz$.
\end{proposition}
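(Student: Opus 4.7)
The plan is to begin with the tangent/normal decomposition given by Lemma 3.3(iv) and Proposition 4.1, then read off the projections directly and finally massage them into the two closed forms claimed in the proposition.

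First, I will apply \cref{lemma:E}(iv) to decompose an arbitrary $Y\in\Rnnpp$ as
\[
Y = XJ W_Y + JX_\perp K_Y,\qquad W_Y = X\zz J\zz Y,\quad K_Y = (X_\perp\zz JX_\perp)\inv X_\perp\zz Y,
\]
with no symmetry restriction on $W_Y\in\R^{2p\times 2p}$. Split $W_Y = \sym(W_Y) + \skewsym(W_Y)$. By \eqref{eq:tangent-2} the matrix $T := XJ\sym(W_Y) + JX_\perp K_Y$ belongs to $\TX$, and by \eqref{eq:normal} the matrix $N := XJ\skewsym(W_Y)$ belongs to $\NX$; clearly $T+N=Y$. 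Orthogonality of this splitting with respect to $g_\rho$ is built into the construction: inspecting the metric~\eqref{eq:rmetric}, the $W$-block and the $K$-block of any two tangent vectors are paired separately, and $\sym(2p)\perp \skewset(2p)$ for the Euclidean inner product, so $g_\rho(T,N)=0$. By uniqueness of orthogonal decomposition, $\proj(Y)=T$ and $\projn(Y)=N$, which immediately gives \eqref{eq:project-normal}.

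To obtain the closed form \eqref{eq:project-tangent}, I use the identity
\[
JX_\perp K_Y = JX_\perp (X_\perp\zz JX_\perp)\inv X_\perp\zz Y = (I - XJX\zz J\zz)Y,
\]
which is \eqref{eq:JXpK-1}. Substituting into $T$ and recognising $W_Y = X\zz J\zz Y$ yields
\[
\proj(Y) = XJ\sym(X\zz J\zz Y) + (I - XJX\zz J\zz)Y,
\]
as required. Formula~\eqref{eq:project-normal} falls out similarly.

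For the second form \eqref{eq:pj-tg-S}, the plan is a direct verification. First observe $S_{X,Y}$ is manifestly symmetric (the two summands are transposes of each other), so $S_{X,Y}JX \in \TX$ by \eqref{eq:tangent-3}. Then compute $S_{X,Y}JX$ using the symplecticity identity $(XJ)\zz JX = J\zz X\zz JX = J\zz J = I$: the first summand gives $G_X Y (XJ)\zz JX = G_X Y = Y - \tfrac12 XJX\zz J\zz Y$, and for the second I compute $G_X\zz JX = JX - \tfrac12 JXJ\zz X\zz\cdot JX = JX - \tfrac12 JX = \tfrac12 JX$, so the second summand is $\tfrac12 XJ Y\zz JX$. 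Adding and comparing with the expansion $XJ\sym(X\zz J\zz Y)=\tfrac12 XJX\zz J\zz Y + \tfrac12 XJY\zz JX$ recovers \eqref{eq:project-tangent} exactly, which proves \eqref{eq:pj-tg-S}.

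The main obstacle is really bookkeeping: one has to track the transpose of $XJX\zz J\zz$ carefully (it is not itself symmetric) and avoid confusing $J\zz = -J$ with $J\zz J = I$ when simplifying $G_X\zz JX$. Once $(XJ)\zz JX = I$ is used correctly, everything collapses and the two forms of $\proj(Y)$ coincide.
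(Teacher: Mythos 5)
Your proof is correct. For \eqref{eq:project-tangent} and \eqref{eq:project-normal}, your argument (decompose $Y$ in the $E$-coordinates $W_Y,K_Y$, split $W_Y$ into symmetric and skew-symmetric parts, and read off the tangent/normal pieces via~\eqref{eq:tangent-2} and~\eqref{eq:normal}, then substitute~\eqref{eq:JXpK-1}) is essentially the same as the paper's, which starts from the ansatz $Y=\proj(Y)+\projn(Y)$ with unknown coefficients and multiplies by $X\zz J\zz$; both are variants of the same $E$-coordinate computation. For \eqref{eq:pj-tg-S}, you depart slightly: you verify the given expression $S_{X,Y}JX$ directly by expanding and using $(XJ)\zz JX=I$ and $G_X\zz JX=\tfrac12 JX$, which is exactly the computation the paper describes as ``a fairly straightforward development'' but chooses not to write out; instead the paper gives a constructive derivation (seeking $S_{X,Y}$ in the low-rank ansatz~\eqref{eq:S-rank2p} and solving for $W_L,K_L$), which has the advantage of showing how the formula for $S_{X,Y}$ was discovered rather than merely confirming it. Both routes are valid; your direct verification is shorter, the paper's derivation is more explanatory. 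One small remark: the orthogonality check $g_\rho(T,N)=0$ you include is harmless but not actually needed, since once $T\in\TX$, $N\in\NX$, and $T+N=Y$, the conclusion $\proj(Y)=T$, $\projn(Y)=N$ follows from the definition of $\NX$ as the $g_\rho$-orthogonal complement of $\TX$.
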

\begin{proof}
	First we prove~\eqref{eq:project-tangent} and~\eqref{eq:project-normal}. For all $Y\in\Rnnpp$, according to \eqref{eq:tangent-2} and \cref{proposition:normal}, we {have} 
	\begin{align*}
	\proj(Y) &= XJW_Y+JX_\perp K_Y,\\
	\projn(Y)&= XJ\varOmega_Y,
	\end{align*}
	where $W_Y\in\symset(2p)$,  $K_Y\in\R^{(2n-2p)\times 2p}$ and $\varOmega_Y\in\skewset(2p)$. We thus have
	\begin{align}  \label{eq:Y=PY+PPY}
		Y = \proj(Y)+\projn(Y)= XJW_Y+JX_\perp K_Y +XJ\varOmega_Y.
	\end{align}
	Multiplying~\eqref{eq:Y=PY+PPY} from the left by $X\zz J\zz$ yields $X\zz J\zz Y = W_Y+\varOmega_Y$, hence $W_Y = \sym(X\zz J\zz Y)$ and $\varOmega_Y = \skewsym(X\zz J\zz Y)$. Replacing these expressions in~\eqref{eq:Y=PY+PPY} yields $JX_\perp K_Y=Y-XJW_Y-XJ\varOmega_Y=(I- XJX\zz J\zz)Y$. We have thus obtained~\eqref{eq:project-tangent} and~\eqref{eq:project-normal}. 

It remains to prove~\eqref{eq:pj-tg-S}. A fairly straightforward development confirms that~\eqref{eq:pj-tg-S} is equal to $Y-XJ\skewsym(X\zz J\zz Y) = Y - \projn(Y) = \proj(Y)$. Instead we give a constructive proof which explains how we obtained the expression of $S_{X,Y}$. 
We thus seek $S_{X,Y}$ symmetric such that $S_{X,Y}JX = \proj(Y)$. In view of~\eqref{eq:S-rank2p} and \cref{lemma:E}, we can restrict our search to $S_{X,Y} = L(XJ)\zz + (XJ)L\zz$ with $L = X J W_L + JX_\perp K_L$, and the task thus reduces to finding $W_L\in\R^{2p\times 2p}$ and $K_L\in\R^{(2n-2p)\times 2p}$ such that 
	\begin{align}  \label{eq:pj-tg-S-3}
	2XJ\sym(W_L)+JX_\perp K_L&=\proj(Y) = Y - \projn(Y)=Y - XJ\skewsym(X\zz J\zz Y),
	\end{align}
	where the last equality comes from \eqref{eq:project-normal}.
	Multiplying~\eqref{eq:pj-tg-S-3} from the left by $X\zz J\zz$ yields $2\sym(W_L)=\sym(X\zz J\zz Y)$, a solution of which is $W_L=\half X\zz J\zz Y$.
	It further follows from \eqref{eq:pj-tg-S-3}  that $JX_\perp K_L=(I- XJX\zz J\zz)Y$. Replacing the obtained expressions in the above-mentioned decomposition of $L$ yields $L=X J W_L + JX_\perp K_L=\half XJ X\zz J\zz Y+(I- XJX\zz J\zz)Y=(I-\frac12 XJX\zz J\zz)Y$.  Replacing this expression in~\eqref{eq:S-rank2p} yields the sought expression of $S_{X,Y}$.
\end{proof}

Note that the projections do not depend on $\rho$ nor $X_\perp$, though the metric $g_\rho$ does. Nevertheless, by using \eqref{eq:JXpK-1}, we  obtain 
\begin{align}\label{eq:project-tangent-1}
	\proj(Y) = XJ\sym(X\zz J\zz Y) + JX_\perp\dkh{X\zz_\perp J X_\perp}\inv X\zz_\perp Y.
\end{align}
Therefore, the projection $\proj(Y)$ in \eqref{eq:project-tangent} can {alternatively be} computed by involving $X_\perp$.

For later reference, we record the following result which follows from~\eqref{eq:pj-tg-S} and the fact that $\proj(Z)=Z$ when $Z\in\TX$. It shows how to write a tangent vector in the form~\eqref{eq:tangent-3}.
\begin{corollary}  \label{cor:ZtoS}
If $X\in\symplectic$ and $Z\in\TX$, then $Z = S_{X,Z}JX$ with $S_{X,Z}$ as in \cref{proposition:projection}.
\end{corollary}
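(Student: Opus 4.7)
The plan is to observe that the corollary is an immediate consequence of Proposition \ref{proposition:projection} combined with the defining property of an orthogonal projection. Specifically, formula~\eqref{eq:pj-tg-S} asserts that for \emph{any} $Y \in \Rnnpp$ we have $\proj(Y) = S_{X,Y} J X$, where $S_{X,Y} = G_X Y (XJ)\zz + XJ (G_X Y)\zz$ and $G_X = I - \tfrac12 XJX\zz J\zz$. The expression is manifestly symmetric (it is a matrix plus its transpose), and it is a linear function of~$Y$.

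The first step is simply to specialize $Y$ to~$Z$. Since $Z \in \TX$ and $\proj$ is the orthogonal projection onto $\TX$ with respect to $g_\rho$, we have $\proj(Z) = Z$ by idempotency of the projection on its image. Plugging $Y = Z$ into~\eqref{eq:pj-tg-S} yields $Z = \proj(Z) = S_{X,Z} J X$, which is exactly the claimed identity, with $S_{X,Z}$ symmetric.

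There is essentially no obstacle here since Proposition \ref{proposition:projection} has already done the work: the constructive derivation in its proof showed how to pick a symmetric $S$ realizing $\proj(Y) = S J X$, and the corollary merely records the consequence when the input is already tangent. If desired, one could alternatively verify directly that $S_{X,Z} J X = Z$ using the tangent-space characterization~\eqref{eq:tangent-1}, i.e. $Z\zz J X + X\zz J Z = 0$, to simplify $G_X Z (XJ)\zz J X + XJ (G_X Z)\zz J X$; but this bypass duplicates effort already present in the proof of Proposition \ref{proposition:projection}, so the short derivation via $\proj(Z) = Z$ is preferable.
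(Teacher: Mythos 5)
Your proof is correct and matches the paper's reasoning exactly: the paper states that the corollary "follows from~\eqref{eq:pj-tg-S} and the fact that $\proj(Z)=Z$ when $Z\in\TX$," which is precisely the specialization you carry out.
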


\subsection{Riemannian gradient}\label{subsubsec:rgrad}
All is now in place to specify how, given the Euclidean gradient $\nabla \bar{f}(X)$ 
where $\bar{f}$ is any smooth extension of $f:\symplectic\to\R$ around $X$ in $\Rnnpp$, 
one can compute the steepest-descent direction of $f$ at $X$ in the sense of the metric $g_\rho$ given in~\eqref{eq:rmetric}, as announced at the beginning of~\cref{section:canonical}.

It is well known (see, e.g.,~\cite[\S 3.6]{absil2009optimization}) that the steepest-descent direction is along the opposite of the Riemannian gradient. Denoted by $\mathrm{grad}f(X)$, the Riemannian gradient of $f$ with respect to a metric $g$ is defined to be the unique element of $\TX$ satisfying the condition
\begin{equation}\label{eq:Riemannian_gradient}
g(\mathrm{grad}f(X), Z)=\mathrm{D}\bar{f}(X)[Z]=\jkh{\nabla \bar{f}(X), Z} \quad\mbox{for all}\; Z\in\TX,
\end{equation}
where the last expression is the standard Euclidean inner product. (The second equality follows from the definition of Fr\'{e}chet derivative in $\Rnnpp$.)
\begin{proposition}\label{proposition:rgrad}
	The forms~\eqref{eq:tangent-2} and~\eqref{eq:tangent-3} of the Riemannian gradient of {a}~function $f:\symplectic\to\R$ with respect to the metric $g_\rho$ defined in~\eqref{eq:rmetric} are
	\begin{subequations}
		\label{eq:rgrad-parent}
		\begin{align}
			\label{eq:rgrad-1}
			\rgrad{X}  &=  \rho XJ \sym(J\zz X\zz \nabla\bar{f}(X)) + JX_\perp X\zz_\perp J\zz \nabla\bar{f}(X)\\
		&= S_X JX,	\label{eq:rgrad-2}
		\end{align}
	\end{subequations}
	where $\nabla\bar{f}(X)$ is the Euclidean gradient of any 
smooth extension $\bar{f}$ of $f$ around $X$ in $\Rnnpp$, and $S_X=\dkh{H_X\nabla\bar{f}(X)} (XJ)\zz +XJ \dkh{H_X\nabla\bar{f}(X)}\zz$ with
	${H_X}=\frac{\rho}{2}XX\zz+JX_\perp X\zz_\perp J\zz$.
\end{proposition}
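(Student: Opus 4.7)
My plan is to exploit the two explicit parameterizations of the tangent space from \cref{prop:tangent}, and the defining variational identity~\eqref{eq:Riemannian_gradient} of the Riemannian gradient. Concretely, I would write $\rgrad{X} = XJW + JX_\perp K$ with unknown $W\in\symset(2p)$ and $K\in\R^{(2n-2p)\times 2p}$, and test against an arbitrary tangent vector $Z = XJW_Z + JX_\perp K_Z$, where $W_Z\in\symset(2p)$ and $K_Z\in\R^{(2n-2p)\times 2p}$ are free. By definition of $g_\rho$, the left-hand side of~\eqref{eq:Riemannian_gradient} becomes $\frac{1}{\rho}\tr(W\zz W_Z)+\tr(K\zz K_Z)$, while the right-hand side expands via the Euclidean inner product as $\tr(W_Z J\zz X\zz\nabla\bar f(X))+\tr(K_Z\zz X_\perp\zz J\zz\nabla\bar f(X))$.

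Matching the $K_Z$--terms (over all $K_Z$) immediately yields $K=X_\perp\zz J\zz\nabla\bar f(X)$. For the $W_Z$--terms, since $W_Z$ is restricted to $\symset(2p)$, only the symmetric part of $J\zz X\zz\nabla\bar f(X)$ matters, so equating gives $\frac{1}{\rho}W=\sym(J\zz X\zz\nabla\bar f(X))$, i.e., $W=\rho\sym(J\zz X\zz\nabla\bar f(X))$. Substituting back into $XJW+JX_\perp K$ produces formula~\eqref{eq:rgrad-1}.

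To obtain~\eqref{eq:rgrad-2}, I would invoke \cref{cor:ZtoS} to conclude that any tangent vector---in particular $\rgrad{X}$---admits the representation $S JX$ for a suitable $S\in\symset(2n)$. Rather than compute $S_{X,\,\rgrad{X}}$ from the corollary directly, it is cleaner to propose the candidate $S_X=(H_X\nabla\bar f(X))(XJ)\zz+XJ(H_X\nabla\bar f(X))\zz$, which is visibly symmetric, and verify that $S_X JX$ reproduces~\eqref{eq:rgrad-1}. The computation uses the identity $(XJ)\zz JX = J\zz X\zz JX = J\zz J = I$ to collapse the first summand to $H_X\nabla\bar f(X)$, and $X\zz JX=J$ together with $X_\perp\zz X=0$ to reduce the second summand $XJ(H_X\nabla\bar f(X))\zz JX$ to $\frac{\rho}{2}XJ\,\nabla\bar f(X)\zz XJ$. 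Expanding $\sym(J\zz X\zz\nabla\bar f(X))$ in~\eqref{eq:rgrad-1} produces exactly the same two symmetric summands $\frac{\rho}{2}XX\zz\nabla\bar f(X)$ and $\frac{\rho}{2}XJ\,\nabla\bar f(X)\zz XJ$, plus the common term $JX_\perp X_\perp\zz J\zz\nabla\bar f(X)$, closing the equality.

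The only place that needs genuine care is the reverse-engineering that motivates the choice of $H_X$: symmetrizing a rank-two candidate of the form $L(XJ)\zz+XJL\zz$ (as allowed by~\eqref{eq:S-rank2p}) and reading off $L=H_X\nabla\bar f(X)$ from the requirement $SJX=\rgrad{X}$. The rest of the argument is bookkeeping with the identities $X\zz JX=J$, $J\zz=-J$, $J^2=-I$, and $X_\perp\zz X=0$; the main conceptual step, already done, is the coefficient-matching in the first paragraph.
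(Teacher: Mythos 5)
Your proof is correct, but it follows a genuinely different route from the one in the paper. The paper first computes the \emph{ambient} Riemannian gradient $\mathrm{grad}_\rho\bar f(X)=B_X^{-1}\nabla\bar f(X)=(\rho XX\zz + JX_\perp X_\perp\zz J\zz)\nabla\bar f(X)$ (using the explicit formula for $B_X$ in~\eqref{eq:BX} and \cref{lemma:E}(iii)), and then applies the tangent projection $\proj$ from \cref{proposition:projection}---once in the form~\eqref{eq:project-tangent-1} to get~\eqref{eq:rgrad-1}, once in the form~\eqref{eq:pj-tg-S} to get~\eqref{eq:rgrad-2}. You instead work directly with the tangent parameterization $\rgrad{X}=XJW+JX_\perp K$ and the defining identity~\eqref{eq:Riemannian_gradient}, and read off $W$ and $K$ by matching coefficients against an arbitrary tangent test vector. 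For~\eqref{eq:rgrad-2} you then verify the candidate $S_X$ by direct computation rather than deriving it from the projection machinery. The paper's approach is modular and reuses the projection results it has already established; yours is more self-contained, sidesteps $B_X^{-1}$ entirely, and does not require extending $g_\rho$ to the open set $\Rnnpp_\star$. Both are complete and equivalent. One small remark: in the coefficient-matching step for $W$, the equality $\tr(W_Z W)/\rho = \tr(W_Z\,J\zz X\zz\nabla\bar f(X))$ holding for all $W_Z\in\symset(2p)$ only pins down the symmetric part of $J\zz X\zz\nabla\bar f(X)$, which is exactly why the $\sym(\cdot)$ appears; you handled this correctly but it is worth making explicit that uniqueness comes from the a priori requirement $W\in\symset(2p)$.
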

\begin{proof}
According to the definitions of {the} Riemannian gradient \eqref{eq:Riemannian_gradient} and $g_\rho$, for all $Z\in\Rnnpp$, it follows that $\jkh{B_X \rgradbar{X},Z}=g_{\rho}(\rgradbar{X}, Z)=\mathrm{D}\bar{f}(X)[Z]=\jkh{\nabla\bar{f}(X), Z}$. Hence, $\rgradbar{X} = B_X\inv \nabla \bar{f}(X)$. The expression of $B_X$ in~\eqref{eq:BX} yields
	$\rgradbar{X} = B_X\inv \nabla\bar{f}(X)=E\fkh{\begin{smallmatrix}
		{\rho} I & 0\\
		0 & I
		\end{smallmatrix}} E\zz \nabla\bar{f}(X)=(\rho XX\zz +JX_\perp X_\perp\zz J\zz)\nabla\bar{f}(X)$.
	Owing to \cite[(3.37)]{absil2009optimization}, it holds that $\rgrad{X}=\proj(\rgradbar{X}).$
	By fairly straightforward developments, the expression~\eqref{eq:project-tangent-1} of $\proj$ yields~\eqref{eq:rgrad-1} while the expression~\eqref{eq:pj-tg-S} of $\proj$ yields~\eqref{eq:rgrad-2}. 
\end{proof}

Just as the Riemannian metric $g_{\rho,X_\perp}$ in~\eqref{eq:rmetric}, the gradient $\rgrad{X}$ depends on $\rho$ and $X_\perp$. The dependence on $X_\perp$ is through the factor $JX_\perp X_\perp\zz J\zz$.
However, we have seen that when we impose the orthonormalization
condition (I) or (II), the metric $g_{\rho,X_\perp}$ no longer depends on the remaining freedom in the choice of $X_\perp$. Hence the same can be said of $\rgrad{X}$. Specifically: 
\begin{itemize}
	\item[(I)] $X_\perp$ is orthonormal. Then $X_\perp X_\perp\zz$ is the symmetric projection matrix onto the orthogonal complement of $\spn(X)$, i.e., $$X_\perp X_\perp\zz = I - X(X\zz X)^{-1}X\zz;$$
	\item[(II)] $X_\perp (X_\perp\zz JX_\perp)^{-1}$ is orthonormal. Then the factor 
	\begin{align*}
			JX_\perp X_\perp\zz  J\zz  &= JX_\perp (X_\perp\zz JX_\perp)^{-\top}X_\perp\zz   X_\perp (X_\perp\zz JX_\perp)^{-1} X_\perp\zz  J\zz  \\
			&= (I-XJX\zz J\zz )(I-XJX\zz J\zz )\zz ,
	\end{align*}
	 where the last equality follows from \eqref{eq:JXpK-1}.
\end{itemize}
In practice, we compute the gradient by means of above two formulations. Note that we do not need to compute $X_\perp$.

Even when a restriction on $X_\perp$ makes $\rgrad{X}$ independent of the choice of $X_\perp$, it is still impacted by the choice of $\rho$. The influence of $\rho$ on the algorithmic performance will be investigated in  \cref{subsection:default setting}.

As we have seen, in the particular case $p=n$ and $\rho=1$, the matrix $X_\perp$ disappears and the canonical-like metric $g_\rho$ reduces to the left-invariant metric used in~\cite{wang2018riemannian,birtea2018optimization}. In that particular case, the Riemannian gradient~\eqref{eq:rgrad-1} is thus also the same {as that} developed in \cite{wang2018riemannian,birtea2018optimization}.

\subsection{Quasi-geodesics}\label{subsec:geo}
Searching along geodesics can be viewed as the most ``geometrically natural'' way to minimize a function on a manifold. In this section, we observe that it is difficult, perhaps impossible, to obtain a closed-form expression of the geodesics of $\symplectic$ with respect to $g_\rho$ in~\eqref{eq:rmetric}. Instead, we develop an approximation, termed quasi-geodesics, defined by imposing that its \emph{Euclidean} second derivative (in lieu of the Riemannian second derivative in $\Rnnpp$ endowed with $g_\rho$) belongs to the normal space~\eqref{eq:normal}.

The geodesics on $(\symplectic,g_\rho)$ are the smooth curves on $\symplectic$ with zero Riemannian acceleration. Since $(\symplectic,g_\rho)$ is a Riemannian submanifold of $(\Rnnpp,g_\rho)$, it follows that the Riemannian acceleration of a curve on $(\symplectic,g_\rho)$ is the orthogonal projection (in the sense of $g_\rho$) onto $\TX$ of its Riemannian acceleration as a curve in $(\Rnnpp,g_\rho)$; see, e.g.,~\cite[Prop.~5.3.2 and (5.23)]{absil2009optimization}.  In other words, the geodesics on $(\symplectic,g_\rho)$ are the curves $Y(t)$ in $\symplectic$ that satisfy
\begin{equation}\label{eq:geo-normal-true}
	\frac{\mathcal{D}}{\mathrm{d}t}\dot{Y}(t)+Y(t)J\varOmega=0 \quad\mbox{with~} \varOmega\in\skewset(2p),
\end{equation}
where we have used the characterization~\eqref{eq:normal} of the normal space, and where $\frac{\mathcal{D}}{\mathrm{d}t}$ denotes the Riemannian covariant derivative \cite[(5.23)]{absil2009optimization} on $\dkh{\Rnnpp,g_\rho}$.

Finding a closed-form solution of~\eqref{eq:geo-normal-true} is elusive. Instead, we replace $\frac{\mathcal{D}}{\mathrm{d}t}\dot{Y}$ in~\eqref{eq:geo-normal-true} by the classical (Euclidean) second derivative $\ddot{Y}$ in $\Rnnpp$, yielding
\begin{equation}\label{eq:geo-normal}
\ddot{Y}(t)+Y(t)J\varOmega=0 \quad\mbox{with~} \varOmega\in\skewset(2p),
\end{equation}
which we term \emph{quasi-geodesic} equation.

The purpose of the rest of this section is to obtain a closed-form solution of~\eqref{eq:geo-normal}.

For ease of notation, we {write $Y$ instead of $Y(t)$ if there is no confusion}. Since the curve $Y(t)$ remains on $\symplectic$, we have $Y(t)\zz J Y(t)=J$ for all $t$. Differentiating this equation
twice with respect to $t$, we have
\begin{equation}\label{eq:geo-differential}
	Y\zz J \dot{Y} + \dot{Y}\zz J Y=0,\quad Y\zz J \ddot{Y}+2 \dot{Y}\zz J \dot{Y} +\ddot{Y}\zz J Y=0.
\end{equation}
Substituting $\ddot{Y}$ of \eqref{eq:geo-normal} into the second equation of \eqref{eq:geo-differential}, we arrive at $\varOmega=-\dot{Y}\zz J \dot{Y}$. Therefore, \eqref{eq:geo-normal} can equivalently be written as 
\begin{equation}\label{eq:geo-ODE}
	\ddot{Y}-YJ(\dot{Y}\zz J \dot{Y})=0.
\end{equation}
Following an approach similar to the one used in~\cite{edelman1998geometry} to obtain the geodesics on the Stiefel manifold, we rewrite the differential equation \eqref{eq:geo-ODE} by involving three types of {first integrals}. Let 
$$C=Y\zz J Y,\quad W=Y\zz J \dot{Y},\quad S=\dot{Y}\zz J \dot{Y}.$$
According to \eqref{eq:geo-ODE}, it is straightforward to verify that
$$\dot{C} = W-W\zz,\quad \dot{W} = S+CJS, \quad \dot{S} = SJW-W\zz JS.$$
Since $Y\in\symplectic$, we have that $C = J$ and therefore $\dot{C}=0$. This in turn implies that $W=W\zz$ and $\dot{W}=0$. {As a~consequence, we get $W=W(0)$.}
Thus, integrals of the above motion are
\begin{align}\label{eq:geo-integral}
{C(t)} = J,\quad {W(t)} = W(0), \quad {S(t)} = e^{(JW)\zz t} S(0) e^{JWt}.
\end{align}

Since $W$ is symmetric, the matrix $JW$ is a Hamiltonian matrix. We will make use of the following classical result stating that the exponential of every Hamiltonian matrix is symplectic.
\begin{proposition}\label{proposition:exponential}
	For every symmetric matrix $W\in\R^{2p\times 2p}$, the matrix exponential $e^{JW}$ is symplectic.
\end{proposition}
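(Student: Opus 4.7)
The plan is to show directly that $(e^{JW})\zz J (e^{JW}) = J$ by exploiting the fact that $A := JW$ is a Hamiltonian matrix, i.e., $A\zz J + JA = 0$.

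First I would verify the Hamiltonian identity. Using $J\zz = -J$, $J^2 = -I$, and the symmetry $W\zz = W$, a short calculation gives $A\zz J = W\zz J\zz J = -WJ^2 = W$ and $JA = J^2 W = -W$, so $A\zz J + JA = 0$ as claimed.

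Next, to pass from this infinitesimal identity to a statement about $e^A$, I would introduce the matrix-valued function
\begin{equation*}
\Phi(t) := (e^{tA})\zz J (e^{tA}), \qquad t\in\R,
\end{equation*}
differentiate it with respect to $t$, and use the product rule together with $\frac{\mathrm{d}}{\mathrm{d}t} e^{tA} = A e^{tA}$ to obtain
\begin{equation*}
\dot{\Phi}(t) = (e^{tA})\zz A\zz J (e^{tA}) + (e^{tA})\zz J A (e^{tA}) = (e^{tA})\zz \bigl( A\zz J + JA \bigr) (e^{tA}) = 0.
\end{equation*}
Since $\Phi(0) = J$, this yields $\Phi(t) \equiv J$, and evaluating at $t=1$ gives the desired identity $(e^{JW})\zz J (e^{JW}) = J$, i.e., $e^{JW} \in \symplecticgroup$.

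There is no real obstacle here: the proof is a one-parameter-group argument that converts the algebraic Hamiltonian property $A\zz J + JA = 0$ into the multiplicative symplecticity of its exponential. The only thing one must be careful about is using the correct sign conventions $J\zz = -J$ and $J^2 = -I$ together with $W = W\zz$ when checking that $A = JW$ is Hamiltonian; after that, the constancy of $\Phi(t)$ follows immediately from the product rule.
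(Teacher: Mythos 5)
Your proof is correct and follows essentially the same route as the paper's: both define the curve $t\mapsto (e^{tJW})\zz J e^{tJW}$, differentiate, use the symmetry of $W$ (equivalently, the Hamiltonian identity $(JW)\zz J + J(JW)=0$) to show the derivative vanishes, and conclude by constancy from the initial value $J$. No substantive difference.
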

\begin{proof}
A proof can be found, e.g., in~\cite[Corollary~3.1]{meyer1992linear} and~\cite[Proposition~2.15]{deGosson2006}. Briefly, letting $U(t) = (e^{tJW})\zz J e^{tJW}$, it follows from $\frac{\mathrm{d}}{\mathrm{d}t} e^{At} = Ae^{At} = e^{At}A$ that $\dot{U}(t) = (e^{tJW})\zz ((JW)\zz J+ J(JW)) e^{tJW} = (e^{tJW})\zz (W\zz-W) e^{tJW} = 0$ for all $t$. Hence $U(t)$ is constant, and since $U(0)=J$, the claim follows. 
\end{proof}

Next, we reorganize \eqref{eq:geo-integral}
as an integrable equation  to derive the closed-form expression of quasi-geodesics. 
\begin{proposition}\label{proposition:geo}
Let $Y(t)$
on $\symplectic$ satisfy the quasi-geodesic equation~\eqref{eq:geo-ODE} with the initial conditions $Y(0)=X\in\symplectic$ and $\dot{Y}(0)=Z\in\TX$. Then 
	\begin{equation}\label{eq:geo}
		Y(t) = Y_X^{\qgeo}(t;Z):=\fkh{X, Z} \exp\dkh{t\begin{bmatrix}
			-JW & JZ\zz J Z \\ I_{2p} & -JW
			\end{bmatrix}} 
		\begin{bmatrix}
		I_{2p} \\ 0
		\end{bmatrix} 
		e^{JWt},
	\end{equation}
	where $W=X\zz J Z$.
\end{proposition}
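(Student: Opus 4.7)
The plan is to exploit the first integrals already derived above the statement of the proposition to reduce the nonlinear quasi-geodesic ODE \eqref{eq:geo-ODE} to a linear matrix ODE with constant coefficients, and then solve that ODE in closed form. From \eqref{eq:geo-integral}, under the hypotheses we have $Y(t)\zz J \dot Y(t) \equiv W$ and $\dot Y(t)\zz J \dot Y(t) = e^{(JW)\zz t} L\, e^{JWt}$ with $L := Z\zz J Z$, so the ODE \eqref{eq:geo-ODE} is equivalent to
\begin{equation*}
\ddot Y(t) = Y(t)\, J\, e^{-WJt}\, L\, e^{JWt}.
\end{equation*}

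Next, I would perform the gauge change $\tilde Y(t) := Y(t)\, e^{-JWt}$. The crucial elementary identity is $J\, e^{\pm WJt} = e^{\pm JWt}\, J$, which follows by expanding the exponential and using the algebraic fact that $J(WJ)^k = (JW)^k J$ for all $k \ge 0$. Combined with the trivial commutativity of $JW$ with $e^{\pm JWt}$, this turns the displayed equation into the constant-coefficient linear ODE
\begin{equation*}
\ddot{\tilde Y}(t) + 2\, \dot{\tilde Y}(t)\, JW + \tilde Y(t)\bigl((JW)^2 - JL\bigr) = 0,
\end{equation*}
with initial data $\tilde Y(0) = X$ and $\dot{\tilde Y}(0) = Z - X\, JW$.

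Finally, to solve this linear second-order ODE, I would reduce it to a first-order system by setting $\tilde U(t) := \dot{\tilde Y}(t) + \tilde Y(t)\, JW$. A short computation then shows that the $2n \times 4p$ matrix $[\tilde Y(t), \tilde U(t)]$ satisfies
\begin{equation*}
\frac{d}{dt}[\tilde Y, \tilde U] = [\tilde Y, \tilde U]\, \begin{bmatrix} -JW & JZ\zz J Z \\ I_{2p} & -JW \end{bmatrix} = [\tilde Y, \tilde U]\, M,
\end{equation*}
with $[\tilde Y(0), \tilde U(0)] = [X, Z]$, the second entry coming from the simple check $\tilde U(0) = (Z - XJW) + X\, JW = Z$. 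Hence $[\tilde Y(t), \tilde U(t)] = [X, Z]\, \exp(tM)$, and extracting the first $2p$ columns and multiplying on the right by $e^{JWt}$ yields \eqref{eq:geo}. The main obstacle is bookkeeping the non-commuting matrix products when establishing and deploying the intertwining identity $J\, e^{\pm WJt} = e^{\pm JWt}\, J$; once that identity is in hand, the gauge change and reduction to first order are routine chain-rule computations.
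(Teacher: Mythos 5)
Your argument is correct and follows essentially the same route as the paper: you use the first integrals~\eqref{eq:geo-integral}, the same gauge change $\tilde Y(t)=Y(t)e^{-JWt}$ (the paper works directly with the pair $Y e^{-JWt}$, $\dot Y e^{-JWt}$; your $\tilde U(t)=\dot{\tilde Y}(t)+\tilde Y(t)JW$ is nothing but $\dot Y(t)e^{-JWt}$, so you are introducing the same auxiliary variable), and the same intertwining identity $Je^{-WJt}=e^{-JWt}J$ (the paper derives it from symplecticity of $e^{JWt}$ via \cref{proposition:exponential}, you by a direct power-series expansion). The only organizational difference is that you pass through an explicit constant-coefficient second-order ODE in $\tilde Y$ before reducing to the first-order system, whereas the paper stacks the first-order equations for $Ye^{-JWt}$ and $\dot Y e^{-JWt}$ directly; both give the same block matrix and the same integration.
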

\begin{proof}
Using the notation and results introduced above, we have
	\begin{align*}
		\arraycolsep=2pt
		\begin{array}{rcl}
		\frac{\mathrm{d}}{\mathrm{d}t}\dkh{Y(t)e^{-JWt}} & = &\dot{Y}{(t)} e^{-JWt} - Y{(t)}e^{-JWt} JW, \\
		\frac{\mathrm{d}}{\mathrm{d}t}\dkh{\dot{Y}(t)e^{-JWt}} & = & \ddot{Y}{(t)} e^{-JWt} - \dot{Y}{(t)}e^{-JWt} JW\\
		&=& Y{(t)}JS{(t)}e^{-JWt} - \dot{Y}(t)e^{-JWt} JW\\
		&=& Y{(t)}J e^{(JW)\zz t} S(0) e^{JW t} e^{-JWt}  - \dot{Y}(t)e^{-JWt} JW \\
		&=& Y{(t)} e^{-JW t} JS(0)- \dot{Y}(t)e^{-JWt} JW,
		\end{array}
	\end{align*}
where the last equality uses the identity $J e^{(JW)\zz t}=e^{-JW t} J$ follows from the fact (\cref{proposition:exponential}) that $e^{JWt}$ is symplectic. Stacking the above equations together yields
	\begin{equation*}
		\frac{\mathrm{d}}{\mathrm{d}t} \fkh{Y(t)e^{-JWt}, \dot{Y}(t)e^{-JWt}} = \fkh{Y(t)e^{-JWt}, \dot{Y}(t)e^{-JWt}} \begin{bmatrix}
		-JW & JS(0) \\ I_{2p} & -JW
		\end{bmatrix},
	\end{equation*}
	which implies that 
	\begin{equation*}
		Y(t)=\fkh{Y(0), \dot{Y}(0)}\exp\dkh{t\begin{bmatrix}
			-JW & JS(0) \\ I_{2p} & -JW
			\end{bmatrix}} 
	\begin{bmatrix}
		I_{2p} \\ 0
	\end{bmatrix} 
	e^{JWt}.
	\end{equation*}
	Since $Y(0)=X$, $\dot{Y}(0)=Z$, $W=W(0)=X\zz J Z$, $S(0)=Z\zz J Z$, we arrive at \eqref{eq:geo}.
\end{proof}

Note that if a tangent vector is {given} by $Z=XJW_Z+JX_\perp K_Z$ {with} $W_Z\in\symset(2p)$ and $K_Z\in\R^{(2n-2p)\times 2p}$, then $W=-W_Z$ in \eqref{eq:geo}. 

In the case of {the} symplectic group ($p=n$), quasi-geodesics can be computed directly by solving $W=Y\zz J \dot{Y}=W(0)$, which leads to 
$Y_U^{\qgeo}(t;Z)=Ue^{-JWt},$
where $Y(0)=U\in\symplecticgroup$ and $W=U\zz J Z$. It turns out that the quasi-geodesic curves for $p=n$ coincide with the geodesic curves corresponding to the indefinite Khvedelidze--Mladenov metric $\jkh{Z_1, Z_2}_U :=\tr\dkh{U\inv Z_1 U\inv Z_2}$; see~\cite[Theorem~2.4]{fiori2011solving}.

\section{Optimization on the symplectic Stiefel manifold}\label{sec:optimization}
With the geometric tools of \cref{sec:geometry} and \cref{section:canonical} at hand, almost everything is in place to apply to problem~\eqref{prob:original} first-order Riemannian optimization methods such as those proposed and analyzed in~\cite{absil2009optimization,RinWir2012,iannazzo2018riemannian,HuaAbsGal2017,BouAbsCar2018,hu2019brief}. The remaining task is to choose a mechanism which, given a search direction in the tangent space $\TX$, returns a curve on $\symplectic$ along that direction. In Riemannian optimization, it is customary to provide such a mechanism by choosing a retraction. In \cref{subsec:quasi} and \cref{subsec:cayley}, we propose two retractions on $\symplectic$. Then, in \cref{subsection:non-monotone}, we work out a non-monotone gradient method for problem~\eqref{prob:original}.

First we briefly recall the concept of (first-order) retraction on a manifold $\manifold$; see~\cite{ADM2002} or~\cite[\S 4.1]{absil2009optimization} for details. Let $\mathrm{T}\manifold:=\bigcup_{X\in\manifold}\mathrm{T}_X \manifold$ be the tangent bundle to $\manifold$. A smooth mapping ${\cal R}: \mathrm{T}\manifold \rightarrow \manifold$ is called a \emph{retraction} 
if the following properties hold for all $X\in\manifold$:
\begin{itemize}
	\item[1)] ${\cal R}_X(0_X)=X$, where $0_X$ is the origin of $\mathrm{T}_X \manifold$;
	\item[2)] $\frac{\text{d}}{\text{dt}}{\cal R}_X(tZ)|_{t=0}=Z$ for all $Z\in \mathrm{T}_X \manifold$,
\end{itemize}
where ${\cal R}_X$ denotes the restriction of ${\cal R}$ to $\mathrm{T}_X \manifold$. Then, given a search direction $Z$ at a point $X$, the above-mentioned mechanism simply returns the curve $t\mapsto {\cal R}_X(tZ)$. 

In most analyses available in the literature, 
the retraction ${\cal R}$ is assumed to be globally defined, i.e., 
defined everywhere on the tangent bundle $\mathrm{T}\manifold$. In~\cite{BouAbsCar2018}, however, ${\cal R}_X$ is only required to be defined locally, in a closed ball of radius $\varrho(X)>0$ centered at $0_X\in \mathrm{T}_X\manifold$, provided that $\inf_k \varrho(X^k)>0$ where the $X^k$'s denote the iterates of the considered method. Henceforth, unless otherwise stated, we only assume that, for all $X\in\manifold$, ${\cal R}_X$ is defined in a neighborhood of $0_X$ in $\mathrm{T}_X\manifold$.

\subsection{Quasi-geodesic curve}  \label{subsec:quasi}
In view of the results of \cref{subsec:geo}, we define ${\cal R}^{\qgeo}$ by
\begin{align}\label{eq:geo-exp}
	{\cal R}^{\qgeo}_X(Z) := Y_X^{\qgeo}(1;Z)=\fkh{X, Z} \exp\dkh{\begin{bmatrix}
		-JW &  JZ\zz J Z \\ I_{2p} & -JW
		\end{bmatrix}} 
	\begin{bmatrix}
	I_{2p} \\ 0
	\end{bmatrix} 
	e^{JW}, 
\end{align}
where $Z\in\TX$ and $W=X\zz J Z$. The concept is illustrated in 
\cref{fig:geodesic}. We prove next that ${\cal R}^{\qgeo}$
is a well-defined retraction on $\symplectic$.

\begin{figure}[htbp]
	\centering
	\small
	\vspace{-7mm}
	\begin{tikzpicture}[scale=.5]
	\filldraw[color=cyan!10] (1.5,6) -- (-1,4) -- (9,3) -- (11,5.3) -- (1.5,6);
	\draw [densely dotted] (1.5,6) -- (-1,4) -- (9,3) -- (11,5.3) -- (1.5,6);
	
	\coordinate [label=-170:$X$] (X) at (3,4.8);
	\coordinate  (Y) at (8,2);	
	\coordinate [label=90:$Z$] (Z) at (8,4.3);
	\coordinate [label=left:$\symplectic$] (M) at (5.3,1);
	\coordinate [label=left:$\TX$] (T) at (0,5);
	\coordinate (C) at ($ (X) ! .5 ! (Z) $);
	\coordinate (A) at ($ (X) ! .5 ! (Y) $);
	\draw[->] (C) --  (Z);
	\draw[->] (X) -- (Z);
	
	\node at ($(Y) + (-1.7,0.1)$) {$\mathcal{R}_X^{\qgeo}(tZ)$};
	\node [fill=black,inner sep=.8pt,circle] at (X) {};
	
	\draw (0,2)	.. controls (0.5,5) and (3,5) .. (6,0.5);
	\draw (0,2)	.. controls (1,2.5) and (2,3) .. (3.92,3);
	\draw (6,0.5)	.. controls (7.5,1.5) and (9.5,2) .. (12,2);
	\draw (0.5,3.4)	.. controls (1.7,5.5) and (11,8) .. (12,2);
	
	\draw [dashed] (X)	.. controls ($(A)+(-0.5,1)$) and ($(A)+(0.5,0.7)$) .. (Y);
	
	\end{tikzpicture}
	\caption{{Quasi-geodesic curve}\label{fig:geodesic}}
\end{figure}
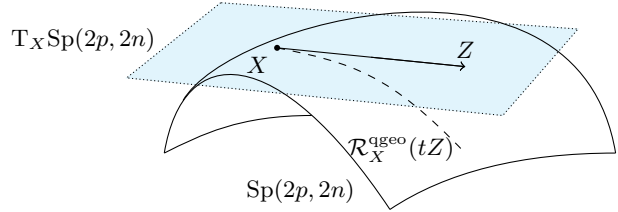

\begin{lemma}\label{lemma:retraction-geo}
	The map ${\cal R}^{\qgeo}: \mathrm{T}\symplectic \to \symplectic$ defined in \eqref{eq:geo-exp} is a globally defined retraction.
\end{lemma}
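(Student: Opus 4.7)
The plan is to verify, in turn, that $\mathcal{R}^{\qgeo}$ is smooth and globally defined on the full tangent bundle, that it actually maps into $\symplectic$, and that it satisfies the two retraction axioms. Since \eqref{eq:geo-exp} is composed entirely of matrix exponentials and algebraic operations, $\mathcal{R}^{\qgeo}$ is smooth on the whole of $\mathrm{T}\symplectic$ as soon as we know it takes values in $\symplectic$.

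The main substantive step is establishing this inclusion. Fix $X\in\symplectic$, $Z\in\TX$, and write $Y(t):=\mathcal{R}^{\qgeo}_X(tZ)=[X,Z]\,P(t)\,e^{JWt}$ with $P(t):=e^{tM}\fkh{\begin{smallmatrix}I\\0\end{smallmatrix}}$, where $M$ and $W=X\zz JZ$ are as in \eqref{eq:geo-exp}. By \eqref{eq:tangent-1}, $W$ is symmetric, so \cref{proposition:exponential} yields that $e^{JWt}$ is symplectic, i.e., $(e^{JWt})\zz J\,e^{JWt}=J$. It therefore suffices to show that
\begin{equation*}
A(t):=P(t)\zz N P(t),\qquad N:=[X,Z]\zz J[X,Z]=\fkh{\begin{smallmatrix}J & W\\ -W & Z\zz JZ\end{smallmatrix}},
\end{equation*}
is constantly equal to $J$ (the lower-left block uses $W\zz=W$). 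Differentiating yields $\dot A=P\zz(M\zz N+NM)P$, so it is enough to prove the algebraic identity $M\zz N+NM=0$, which follows from a direct block computation exploiting $W\zz=W$ and the skew-symmetry of $Z\zz JZ$. Combined with $A(0)=X\zz JX=J$, this gives $A(t)\equiv J$, and hence $Y(t)\zz JY(t)=J$ for all $t$.

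For the retraction axioms, substituting $t=0$ into \eqref{eq:geo-exp} immediately gives centering, $\mathcal{R}^{\qgeo}_X(0_X)=[X,0]\fkh{\begin{smallmatrix}I\\0\end{smallmatrix}}=X$. For local rigidity, I would rely on the scaling identity $Y^{\qgeo}_X(1;tZ)=Y^{\qgeo}_X(t;Z)$, which follows by uniqueness applied to the quasi-geodesic ODE \eqref{eq:geo-ODE}, since that equation is invariant under the reparameterization $(Y(s),Z)\mapsto(Y(ts),tZ)$. Then $\tfrac{\mathrm d}{\mathrm dt}\mathcal{R}^{\qgeo}_X(tZ)|_{t=0}=\dot Y^{\qgeo}_X(0;Z)=Z$ by the initial condition built into \cref{proposition:geo}.

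The main obstacle is the identity $M\zz N+NM=0$: although elementary, it is precisely the step where the specific block structure of $M$---in particular the upper-right block $JZ\zz JZ$---comes into play, and any other choice for that block would produce a drift of $A(t)$ away from $J$ and hence a curve that leaves $\symplectic$.
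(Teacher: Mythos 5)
Your proof is correct, and it takes a genuinely different route from the paper's.

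For the range inclusion $\mathcal{R}^{\qgeo}_X(Z)\in\symplectic$, the paper does not re-establish this in the proof of the lemma; it leans implicitly on the derivation preceding \cref{proposition:geo}, where the conservation of the first integrals $C=J$ and $W=W\zz$ along solutions of the ODE~\eqref{eq:geo-ODE} shows that the curve stays on the manifold. You instead give a direct, self-contained verification from the explicit formula~\eqref{eq:geo-exp}: reducing to the conservation of $A(t)=P(t)\zz N P(t)$ and proving the algebraic identity $M\zz N+NM=0$ by block computation. I checked this identity: with $T:=Z\zz J Z$ skew-symmetric and $W$ symmetric, one finds $M\zz N=\fkh{\begin{smallmatrix}-2W & WJW+T\\ -T-WJW & TJW+WJT\end{smallmatrix}}$ and $NM=\fkh{\begin{smallmatrix}2W & -T-WJW\\ WJW+T & -WJT-TJW\end{smallmatrix}}$, which indeed cancel. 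This is a nice gain in self-containedness, since the ODE route needs the extra observation that $\dot W=S+CJS=0$ once $C=J$, plus uniqueness of solutions.

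For the two retraction axioms, the paper differentiates the explicit formula directly, using that $\frac{\mathrm{d}}{\mathrm{d}t}\exp(A(t))|_{t=0}=\exp(A(0))\dot A(0)$ when $A(0)$ and $\dot A(0)$ commute, together with the product rule. You instead invoke the scaling identity $Y_X^{\qgeo}(1;tZ)=Y_X^{\qgeo}(t;Z)$ and read off both $\mathcal{R}^{\qgeo}_X(0_X)=X$ and $\frac{\mathrm{d}}{\mathrm{d}t}\mathcal{R}^{\qgeo}_X(tZ)|_{t=0}=Z$ from the initial conditions built into \cref{proposition:geo}. This is also valid: the scaling identity follows either from the ODE and uniqueness (as you argue) or directly from the formula via conjugation by $\diag(I,tI)$. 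One small caveat is that the ODE-based justification quietly requires knowing that the explicit formula actually solves~\eqref{eq:geo-ODE}; this holds because a local solution exists by Picard--Lindel\"of, stays on $\symplectic$ by the invariance argument, and therefore must coincide with the formula by \cref{proposition:geo}, but it would be worth stating. Also, your centering line "substituting $t=0$" is slightly glib as written --- setting $Z=0$ in~\eqref{eq:geo-exp} leaves $e^M[I;0]=[I;I]$, not $[I;0]$, though $[X,0][I;I]=X$ still holds --- but this is harmless once the scaling identity is in hand. Overall the argument is sound and arguably more explicit than the paper's.
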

\begin{proof}
	In view of~\eqref{eq:geo-exp}, ${\cal R}^{\qgeo}$ is well defined and smooth on $\mathrm{T}\symplectic$. From the power series definition of the matrix exponential, we obtain that $Y_X^{\qgeo}(1;0_X) = X$.
	It also follows from this definition (or from the Baker--Campbell--Hausdorff formula) that $\frac{\mathrm{d}}{\mathrm{d}t} \exp(A(t))|_{t=0} = \exp(A(0)) \dot{A}(0)$ when $A(0)$ and $\dot{A}(0)$ commute. This property can be exploited along with the product rule to deduce that $\frac{\mathrm{d}}{\mathrm{d}t} Y_X^{\qgeo}(1;tZ)|_{t=0} = Z$.
\end{proof}



Numerically, computing the exponential will dominate the complexity when $p$ is relatively large; see \cref{fig:geoVSsymplecticCayley} for timing experiments.

\subsection{Symplectic Cayley transform}\label{subsec:cayley}
In this section, we present another retraction, based on the Cayley transform. It follows naturally from the Cayley transform on quadratic Lie groups~\cite[Lemma~8.7]{hairer2006geometric} and the Cayley retraction on the Stiefel manifold~\cite[(7)]{WenYin2013}, with the crucial help of the tangent vector representation given in \cref{cor:ZtoS}.

Given $Q\in\Rnn$, and considering the quadratic Lie group $\mathcal{G}_Q:=\{X\in\Rnn: X\zz QX=Q\}$ and its Lie algebra $\mathfrak{g}_Q:=\{A\in\Rnn: QA+A\zz Q=0\}$, the Cayley transform is given by~\cite[Lemma~8.7]{hairer2006geometric} 
\begin{equation}\label{eq:cay}
	\cay: \mathfrak{g}_Q\to\mathcal{G}_Q: A \mapsto \cay(A) := (I-A)\inv (I+A),
\end{equation}	
which is well defined whenever $I-A$ is invertible.
As for the Cayley retraction on the Stiefel manifold, it is defined, in view of by~\cite[(7)]{WenYin2013}, by $\mathcal{R}_X(Z) = \cay(A_{X,Z}) X$ where $A_{X,Z} = (I-\frac12 XX\zz)ZX\zz - X Z\zz (I-\frac12 XX\zz)$. This inspires the following definition.

\begin{definition}   \label{def:R-cay}
The \emph{Cayley retraction} on the symplectic Stiefel manifold $\symplectic$ is defined, for $X\in\symplectic$ and $Z\in\TX$, by 
\begin{equation}  \label{eq:R-cay}
{\cal R}^{\cay}_X(Z) 
:= \dkh{I-\frac12 S_{X,Z}J}\inv \dkh{I+\frac12 S_{X,Z}J} X,
\end{equation}
where $S_{X,Z}$ is as in \cref{proposition:projection}, i.e., 
$S_{X,Z} = G_XZ(XJ)\zz + XJ (G_XZ)\zz$ and $G_X = I-\frac12 XJX\zz J\zz$.
It is well defined whenever $I-\frac12 S_{X,Z}J$ is invertible.
\end{definition}

In other words, the selected curve along $Z\in\TX$ is 
\begin{equation}\label{eq:cay-Lie}
	Y_X^{\cay}(t;S) := \dkh{I -\frac{t}{2}SJ}\inv\dkh{I +\frac{t}{2}SJ}X = \cay\dkh{\frac{t}{2}SJ}X,
\end{equation}
where $S = S_{X,Z}$ as defined above.

We confirm right away that ${\cal R}^{\cay}$ is indeed a retraction.
\begin{proposition}
The map ${\cal R}^{\cay}: \mathrm{T}\symplectic \to \symplectic$ in~\eqref{eq:R-cay} is a retraction.
\end{proposition}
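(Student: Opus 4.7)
The plan is to verify the three conditions that define a retraction: that ${\cal R}^{\cay}_X(Z)$ actually lies in $\symplectic$, that ${\cal R}^{\cay}_X(0_X)=X$, and that $\frac{\mathrm{d}}{\mathrm{d}t}{\cal R}^{\cay}_X(tZ)|_{t=0} = Z$. Alongside these, I must also check that ${\cal R}^{\cay}$ is smoothly defined at least in a neighborhood of the zero section of $\mathrm{T}\symplectic$, which, by the convention stated just before \cref{subsec:quasi}, is all that is required.

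The central observation is that, by \cref{proposition:projection}, the matrix $S := S_{X,Z}$ is symmetric. Using $J\zz = -J$ and $S\zz = S$, a one-line check gives $J(SJ) + (SJ)\zz J = JSJ - JSJ = 0$, i.e., $SJ$ belongs to the Lie algebra $\mathfrak{g}_J$ of the quadratic Lie group $\mathcal{G}_J = \symplecticgroup$. Therefore, applying the Cayley transform property of quadratic Lie groups recalled in~\eqref{eq:cay} to $A = \tfrac{1}{2}SJ$, whenever $I - \tfrac{1}{2}SJ$ is invertible, we have $\cay(\tfrac12 SJ) \in \symplecticgroup$, so that
\begin{equation*}
({\cal R}^{\cay}_X(Z))\zz J ({\cal R}^{\cay}_X(Z)) = X\zz \cay(\tfrac{1}{2}SJ)\zz J \cay(\tfrac{1}{2}SJ) X = X\zz J X = J,
\end{equation*}
giving ${\cal R}^{\cay}_X(Z) \in \symplectic$.

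Centering is immediate: the expression of $S_{X,Z}$ in \cref{proposition:projection} is linear in $Z$, so $S_{X,0}=0$ and ${\cal R}^{\cay}_X(0_X) = \cay(0)X = X$. In particular $I - \tfrac{1}{2}S_{X,0}J = I$ is invertible, hence by continuity in $Z$ the same holds on a neighborhood of $0_X$ in $\TX$, on which ${\cal R}^{\cay}_X$ is smooth. For the local rigidity condition, the linearity of $Z \mapsto S_{X,Z}$ gives ${\cal R}^{\cay}_X(tZ) = \cay(\tfrac{t}{2}S_{X,Z}J)X$, and a standard calculation shows $\frac{\mathrm{d}}{\mathrm{d}t}\cay(tA)|_{t=0} = 2A$ (differentiate $(I-tA)\inv(I+tA)$ at $t=0$). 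Applied with $A = \tfrac{1}{2}S_{X,Z}J$, this yields $\frac{\mathrm{d}}{\mathrm{d}t}{\cal R}^{\cay}_X(tZ)|_{t=0} = S_{X,Z}JX$, which by \cref{cor:ZtoS} equals $Z$ precisely because $Z \in \TX$.

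The main obstacle, if any, is conceptual rather than computational: one must recognize that the symmetric-matrix parameterization of $\TX$ developed in \cref{cor:ZtoS} is exactly what is needed so that multiplication by $J$ converts it into an element of $\mathfrak{g}_J$, thereby letting the classical Cayley transform on quadratic Lie groups do the heavy lifting. The remaining verifications are essentially mechanical.
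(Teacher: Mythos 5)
Your proof is correct and follows essentially the same route as the paper's: both verify centering via $S_{X,0}=0$ and local rigidity via the derivative of $\cay$ at the origin together with \cref{cor:ZtoS}. The only difference is cosmetic---you explicitly check that $S_{X,Z}J\in\mathfrak{g}_J$ so that the image lands in $\symplectic$, and you compute $\frac{\mathrm{d}}{\mathrm{d}t}\cay(tA)|_{t=0}=2A$ by hand rather than citing \cite[Lemma~8.8]{hairer2006geometric}, but the paper treats the former as established by the surrounding discussion of the Cayley transform on quadratic Lie groups.
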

\begin{proof}
When $Z=0$, we have $S_{X,Z}=0$ and we obtain ${\cal R}^{\cay}_X(Z)=X$, which is the first defining property of retractions. 
For the second property, 
we have $\frac{\mathrm{d}}{\mathrm{d}t} {\cal R}^{\cay}_X(tZ)|_{t=0} = \frac{\mathrm{d}}{\mathrm{d}t} \cay(\frac{t}{2} S_{X,Z}J) X|_{t=0} = \mathrm{D} \cay(\frac{t}{2} S_{X,Z}J)[\frac12 S_{X,Z}J] X|_{t=0} = S_{X,Z}JX = Z$, where the last two equalities come from $\mathrm{D}\cay(A)[\dot{A}] = 2(I-A)^{-1} \dot{A} (I+A)^{-1}$ (see~\cite[Lemma~8.8]{hairer2006geometric}) and \cref{cor:ZtoS}.
\end{proof}

Incidentally, we point out that the Cayley transform \eqref{eq:cay} can be interpreted as the trapezoidal rule for solving ODEs on quadratic Lie groups. According to \cite[IV. (6.3)]{hairer2006geometric}, 
notice that $\mathrm{T}_{X}\mathcal{G}_Q=\{AX: A\in\mathfrak{g}_Q\}$. Hence, the following defines
a differential equation on $\mathcal{G}_Q$:
\begin{equation}\label{eq:ODE-Lie}
	\dot{Y}(t) = A Y(t),\quad Y(0)=X\in\mathcal{G}_Q,
\end{equation}
where $A\in\mathfrak{g}_Q$. To solve it numerically, we adopt one step of the trapezoidal rule over $[0,t]$, 
\begin{equation}\label{eq:trapezoidal}
	Y(t) = X + \frac{t}{2} \dkh{AX+AY(t)}.
\end{equation}
Its solution is given by 
$Y(t)=\cay\dkh{\frac{t}{2}A}X$ whenever $I-\frac{t}{2} A$ is invertible. Since $\cay(\frac{t}{2}A)$, $X\in \mathcal{G}_Q$, it follows that $Y(t)\in\mathcal{G}_Q$. This means that the trapezoidal rule for \eqref{eq:ODE-Lie} is indeed achieved by the Cayley transform \eqref{eq:cay} and remains on $\mathcal{G}_Q$. In particular, letting $A=S_{X,Z}J\in\mathfrak{g}_{\symplecticgroup}$, $Z=AX\in\TX$  and $Y={\cal R}^{\cay}_X(Z)$, the Cayley retraction \eqref{eq:R-cay} can be exactly recovered by the same trapezoidal rule \eqref{eq:trapezoidal}; see \cref{fig:trapezoidal} for an illustration.

\begin{figure}[htbp]
	\small
	\centering
	\vspace{-7mm}
	\begin{tikzpicture}[scale=.5]
	\filldraw[color=cyan!10] (1.5,6) -- (-1,4) -- (9,3) -- (11,5.3) -- (1.5,6);
	\draw [densely dotted] (1.5,6) -- (-1,4) -- (9,3) -- (11,5.3) -- (1.5,6);
	
	\coordinate [label=-170:$X$] (X) at (3,4.8);
	\coordinate [label=-135: ] (Y) at (8,2);	
	\coordinate [label=90:] (Z) at (9,4.1);
	\coordinate [label=left:$\symplectic$] (M) at (5.3,1);
	\coordinate [label=left:$\TX$] (T) at (0,5);
	\coordinate (C) at ($ (X) ! .5 ! (Z) $);
	\coordinate (A) at ($ (X) ! .5 ! (Y) $);
	
	\draw[-{stealth[red]},red,thick] (X) -- (C);
	\draw[->] (C) --  (Z);
	\draw[-{stealth[blue]},blue,thick] (C) -- (Y);
	
	\node at ($(C) + (-0.9,0.6)$) {$\frac{1}{2}AX$};
	\node at ($(Y) + (-0.3,1.5)$) {$\frac{1}{2}AY$};
	\node at ($(Z) + (0.2,0.5)$) {$Z=AX$};
	\node at ($(Y) + (-1.3,-0.3)$) {${\cal R}^{\cay}_X(Z)$};
	\node [fill=black,inner sep=.8pt,circle] at (X) {};
	\node [fill=black,inner sep=.8pt,circle] at (Y) {};
	
	\draw (0,2)	.. controls (0.5,5) and (3,5) .. (6,0.5);
	\draw (0,2)	.. controls (1,2.5) and (2,3) .. (3.92,3);
	\draw (6,0.5)	.. controls (7.5,1.5) and (9.5,2) .. (12,2);
	\draw (0.5,3.4)	.. controls (1.7,5.5) and (11,8) .. (12,2);
	
	\draw [dashed] (X)	.. controls ($(A)+(-0.5,1)$) and ($(A)+(0.5,0.7)$) .. (Y);
	
	\end{tikzpicture}
	\caption{Trapezoidal rule on the symplectic Stiefel manifold\label{fig:trapezoidal}}
\end{figure}
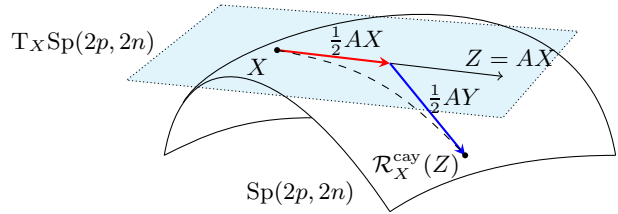

In the case of the symplectic group ($p=n$), it can be shown that ${\cal R}^{\cay}$ reduces to the retraction proposed in~\cite[Proposition~2]{birtea2018optimization}.

The retraction ${\cal R}^{\cay}_X(tZ)$ in~\eqref{eq:R-cay} is not globally defined. It is defined if and only if $I-\frac{t}{2}S_{X,Z}J$ is invertible, i.e., $\frac2t$ is not an eigenvalue of the Hamiltonian matrix $S_{X,Z}J$. Since the eigenvalues of Hamiltonian matrices come in opposite pairs, we have that $Y_X^{\cay}(t;S)$
exists for all $t\geq0$ if and only if $SJ$ has no real eigenvalue. 
This situation constrasts with the Cayley retraction on the (standard) Stiefel manifold, which is everywhere defined due to the fact that $I-A$
is invertible for all skew-symmetric $A$. 

The fact that ${\cal R}^{\cay}$ is not globally defined does not preclude us from applying the convergence and complexity results of~\cite{BouAbsCar2018}. However, we have to ensure that ${\cal R}_X$ is defined in a closed ball of radius $\varrho(X)>0$ centered at $0_X\in \mathrm{T}_X\manifold$, with $\inf_k \varrho(X^k)>0$ where the $X^k$'s denote the iterates of the considered method. An assumption that guarantees this condition is that (i) the objective function $f$ in~\eqref{prob:original} has compact sublevel sets and (ii) the considered optimization scheme guarantees that $f(X^k)\leq f(X^0)$ for all $k$. Indeed, in that case, since $\{X^k\}_{k=0,1,...}$ remains
in a compact subset of $\symplectic$, it is possible to find $\rho$ such that, for all $k$, if $\|Z\|_{X^k}\leq\rho$, then the spectral radius of $\frac12 S_{X^k,Z}J$ is stricly smaller than one, making $I-\frac12 S_{X^k,Z}J$ invertible. 

The main computational cost of ${\cal R}^{\cay}_X(Z)$ (\cref{def:R-cay}) is the symplectic Cayley transform, which requires solving linear systems with the $2n\times 2n$ system matrix $(I-\frac12 S_{X,Z}J)$. In general, {this} has complexity $O(n^3)$. However, as we now show, the low-rank structure of $S_{X,Z}$ can be exploited to reduce the linear system matrix to the size $4p\times 4p$, thereby inducing a considerable reduction of computational cost when $p\ll n$. The development parallels the one given in~\cite[Lemma~4(1)]{WenYin2013} for the (standard) Stiefel manifold.
\begin{proposition}
	Let $S=LR\zz + RL\zz=UV\zz$, where $L,R\in\Rnnpp$ and $U=[L~R]\in\R^{2n\times 4p}$, $V=[R~L]\in\R^{2n\times 4p}$. If $I+\frac{t}{2} V\zz J\zz U$ is invertible, then~\eqref{eq:cay-Lie} admits the expression
	\begin{equation}\label{eq:cayley-simple}
		Y_X^{\cay}(t;S)=X+tU\dkh{I+\frac{t}{2} V\zz J\zz U}\inv V\zz JX.
	\end{equation}
	In particular, if we choose $L=-H_X \nabla \bar{f}(X)$ and $R=XJ$, then we get $S=-S_X$ with $S_X$ given in the gradient formula (\cref{proposition:rgrad}), and we have that ${\cal R}^{\cay}_X(-t\rgrad{X})$ is given by
	\begin{equation}\label{eq:cayley-grad-simple}
	Y_X^{\cay}(t;-S_X)=X+t[-P_{f}~XJ] \dkh{I+\frac{t}{2} 	
	\begin{bmatrix}
			E_\rho & J\zz \\ P_f\zz J\zz P_f & -E\zz_\rho
	\end{bmatrix}	
	}\inv 
	\begin{bmatrix}
			I \\ -E\zz_\rho J
	\end{bmatrix},
	\end{equation}
	where $H_X$ is defined in \cref{proposition:rgrad}, $P_f:=H_X \nabla \bar{f}(X)$, and $E_\rho:=\frac{\rho}{2} X\zz \nabla \bar{f}(X)$.
\end{proposition}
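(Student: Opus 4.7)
The proof splits into two parts: establishing the compact form \eqref{eq:cayley-simple} of the Cayley retraction when $S$ admits a rank-$4p$ factorization $S=UV\zz$, and specializing it to the gradient direction.

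\emph{Part 1.} I would start from the identity
\[
\dkh{I-\tfrac{t}{2}SJ}\inv\dkh{I+\tfrac{t}{2}SJ}=2\dkh{I-\tfrac{t}{2}SJ}\inv-I,
\]
which reduces the task to evaluating $(I-\tfrac{t}{2}SJ)\inv X$. Substituting $SJ=UV\zz J$ and applying the Sherman--Morrison--Woodbury identity with $X_1:=\tfrac{t}{2}U$ and $Y_1:=V\zz J$ gives
\[
\dkh{I-\tfrac{t}{2}UV\zz J}\inv=I+\tfrac{t}{2}U\dkh{I-\tfrac{t}{2}V\zz JU}\inv V\zz J.
\]
Then, using $J\zz=-J$, we rewrite $I-\tfrac{t}{2}V\zz JU=I+\tfrac{t}{2}V\zz J\zz U$, and \eqref{eq:cayley-simple} falls out after multiplication by $X$ and the algebraic simplification $2(I+\tfrac{t}{2}U(\cdot)\inv V\zz J)X-X=X+tU(\cdot)\inv V\zz JX$. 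The invertibility hypothesis on $I+\tfrac{t}{2}V\zz J\zz U$ is exactly what Woodbury requires.

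\emph{Part 2.} First I would check that the choice $L=-P_f$ and $R=XJ$ reproduces $S=-S_X$: plugging into $S=LR\zz+RL\zz$ gives $-P_f(XJ)\zz+XJ(-P_f)\zz=-S_X$ by comparison with the definition of $S_X$ in \cref{proposition:rgrad}. Then \eqref{eq:cayley-simple} applied at this $S$ yields \eqref{eq:cayley-grad-simple} up to identifying the four blocks of $V\zz J\zz U$ and the two blocks of $V\zz JX$.

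\emph{Part 3, the computational core.} The remaining effort is to evaluate these blocks. All of them reduce, via $X\zz JX=J$, $J\zz J=I$, $J^2=-I$, and $X\zz X_\perp=0$, to the single key identity
\[
H_XJX=\tfrac{\rho}{2}XJ,
\]
obtained by expanding $H_X=\tfrac{\rho}{2}XX\zz+JX_\perp X_\perp\zz J\zz$ and using $X\zz JX=J$ together with $X_\perp\zz X=0$. From this one gets $P_f\zz JX=\nabla\bar f(X)\zz H_XJX=E_\rho\zz J$ and, dually, $X\zz JP_f=JE_\rho$. With these two identities in hand, the four entries of $V\zz J\zz U$ with $U=[-P_f,\,XJ]$ and $V=[XJ,\,-P_f]$ evaluate respectively to $E_\rho$, $J\zz$, $P_f\zz J\zz P_f$, and $-E_\rho\zz$ (the off-diagonal $(1,2)$ block using the computation $J\zz X\zz J\zz XJ=J^\top$), while $V\zz JX$ collapses to $\fkh{\begin{smallmatrix}I\\ -E_\rho\zz J\end{smallmatrix}}$. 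Substituting these expressions into \eqref{eq:cayley-simple} yields \eqref{eq:cayley-grad-simple}.

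\emph{Main obstacle.} No step is genuinely hard, but the computation is bookkeeping-heavy: the main care is keeping track of $J$ versus $J\zz$ and recognizing that every simplification ultimately rests on the symplecticity $X\zz JX=J$, the orthogonality $X\zz X_\perp=0$, and the symmetry $H_X\zz=H_X$. Once $H_XJX=\tfrac{\rho}{2}XJ$ is isolated, the rest is a mechanical block expansion.
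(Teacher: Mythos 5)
Your proof is correct, and the approach matches the paper's: both establish \eqref{eq:cayley-simple} via the Sherman--Morrison--Woodbury (push-through) identity applied to $I-\tfrac{t}{2}UV\zz J$, then use $J\zz=-J$ to rewrite the inner matrix as $I+\tfrac{t}{2}V\zz J\zz U$. Your intermediate simplification $(I-A)\inv(I+A)=2(I-A)\inv-I$ is a slightly cleaner route to the same end than the paper's expansion $I+(I+M)\inv(I-M)=2(I+M)\inv$, but these are cosmetic. For the gradient specialization the paper simply asserts that the substitution is ``straightforward''; your Part~3 usefully fills this in, and the key identity $H_XJX=\tfrac{\rho}{2}XJ$ (which follows from $X\zz JX=J$ and $X_\perp\zz X=0$) is indeed the single simplification that collapses all four blocks of $V\zz J\zz U$ and both blocks of $V\zz JX$. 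Your block computations check out, so the argument is complete.
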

\begin{proof}
	By using Sherman--Morrison--Woodbury (SMW) formula \cite[(2.1.4)]{golub2013matrix}:
	\begin{equation*}\label{eq:SMW}
		(A+\bar{U}\bar{V}\zz)\inv=A\inv-A\inv \bar{U} \dkh{I+\bar{V}\zz A\inv \bar{U}} \inv \bar{V}\zz A\inv
	\end{equation*}
	with $A=I$, $\bar{U}=-\frac{t}{2}U$ and $\bar{V}=J\zz V$, it follows that $\dkh{I -\frac{t}{2}SJ}\inv  =\dkh{I -\frac{t}{2}UV\zz J}\inv=(A+\bar{U}\bar{V}\zz)\inv
	 =I-\frac{t}{2} U(I+\frac{t}{2} V\zz J\zz U)\inv V\zz J\zz.$
	In view of \eqref{eq:cay-Lie}, it turns out that
	\begin{align*}
		\arraycolsep=2pt		
		\begin{array}{rcl}
			Y_X^{\cay}(t;S) &=& \cay\dkh{\frac{t}{2}SJ}X = \dkh{I -\frac{t}{2}SJ}\inv\dkh{I +\frac{t}{2}SJ}X \\
			&=& X+\frac{t}{2}U\dkh{I+(I+\frac{t}{2} V\zz J\zz U)\inv{(I-\frac{t}{2}V\zz J\zz U)}}V\zz JX\\
			&=& X+tU\dkh{I+\frac{t}{2} V\zz J\zz U}\inv V\zz JX,
		\end{array}
	\end{align*}
	which completes the proof of \eqref{eq:cayley-simple}. Substituting $L=-H_X \nabla \bar{f}(X)$ and $R=XJ$ into $Y_X^{\cay}(t;S)$, it is straightforward to arrive at \eqref{eq:cayley-grad-simple}.
\end{proof}

\subsection{A non-monotone line search scheme on manifolds}\label{subsection:non-monotone}
Now, since
we can compute a gradient $\rgradg{X}$
and a retraction ${\cal R}_X(Z)$, various first-order Riemannian optimization methods can be applied to problem~\eqref{prob:original}. Here, we adopt an approach
called non-monotone line search~\cite{zhang2004nonmonotone}. It was extended to the Stiefel manifold in~\cite{WenYin2013} and to general Riemannian manifolds in~\cite{iannazzo2018riemannian} and~\cite[\S 3.3]{hu2019brief}. The non-monotone approach has been observed to work well on various Riemannian optimization problems.
In this section, we first present and analyze the non-monotone line search algorithm on general Riemannian manifolds endowed with a retraction that need not be globally defined. Then we apply the algorithm and its analysis to the case of the symplectic Stiefel manifold $\symplectic$.

Given $\beta\in(0,1)$, a backtracking parameter $\delta\in(0,1)$, a search direction $Z^k$ and a trial step size $\gamma_k>0$, the non-monotone line search procedure proposed in \cite{hu2019brief} uses the step size $t_k = \gamma_k \delta^h$, where $h$ is the smallest integer such that 
\begin{equation}\label{eq:non-monotone}
	f\dkh{{\cal R}_{X^k}(t_k Z^k)} \le c_k + \beta t_k \rmetrick{\rgradg{X^k},Z^k},
\end{equation}
and the next iterate is given by $X^{k+1}={\cal R}_{X^k}(t_k Z^k)$.
The scalar $c_{k}$ in~\eqref{eq:non-monotone} is a convex combination of $c_{k-1}$ and $f(X^k)$. Specifically, we set $q_0=1 , c_0= f(X^0)$,
\begin{align}\label{eq:non-monotone-CQ}
	\begin{array}{l}
		q_k = \alpha q_{k-1} + 1,\\
		c_k = \frac{\alpha q_{k-1}}{q_k} c_{k-1}  + \frac{1}{q_k} f(X^k)
	\end{array}
\end{align}
with a parameter $\alpha\in[0,1]$. 
When $\alpha=0$, it follows that $q_k=1$ and $c_k=f(X^k)$, and the non-monotone condition \eqref{eq:non-monotone} reduces to the standard Armijo backtracking line search
\begin{equation}\label{eq:monotone}
f\dkh{{\cal R}_{X^k}(t_k Z^k)} \le f(X^k) + \beta t_k \rmetrick{\rgradg{X^k},Z^k}.
\end{equation}

The corresponding Riemannian gradient method is presented in \crefalg{alg:non-monotone gradient}. 
The search direction is chosen as the Riemannian antigradient in line~\ref{alg:nmg:grad}.
On the other hand, there is no restriction on the choice of the trial step size $\gamma_k$ in line~\ref{alg:nmg:gamma}. One possible strategy is the Barzilai--Borwein (BB) method \cite{BB}, which often accelerates the convergence of gradient methods when the search space is a Euclidean space.  This method was extended to general Riemannian manifolds {in} \cite{iannazzo2018riemannian}. We implement and compare several choices of $\gamma_k$; see \cref{subsection:default setting} for details.

\begin{algorithm2e}[htbp]
	\caption{Riemannian gradient method with non-monotone line search}
	\label{alg:non-monotone gradient}
	\SetKwInOut{Input}{Input}\SetKwInOut{Output}{Input}
	\SetKwComment{Comment}{}{}
	\BlankLine 
	\textbf{Input:}  $X^0\in\manifold$.\\
	\textbf{Require:}  Continuously differentiable function $f:\manifold\to\R$; retraction $\mathcal{R}$ on $\manifold$ defined on $\dom(\mathcal{R})$;
$\beta, \delta\in(0,1)$, $\alpha \in [0,1]$,
$0<\gamma_\mathrm{min}<\gamma_\mathrm{max}$, 
$c_0 = f(X^0)$, $q_0=1$, $\gamma_0 = f(X^0)$.\\
	\For{$k=0,1,2,\dots$}
	{
		
		Choose $Z^k=-\rgradg{X^k}$.  \label{alg:nmg:grad}
		
	        Choose a trial step size $\gamma_k>0$
		and {set $\gamma_k=\max(\gamma_\mathrm{min},\min(\gamma_k,\gamma_\mathrm{max}))$}. Find the smallest integer $h$ such that $\gamma_k \delta^h Z^k\in\dom(\mathcal{R})$ and the non-monotone condition \eqref{eq:non-monotone} hold. 
	Set $t_k=\gamma_k \delta^h$.  \label{alg:nmg:gamma}
		
		Set $X^{k+1} = {\cal R}_{X^k}(t_k Z^k)$.
		
		Update $q_{k+1}$ and $c_{k+1}$ by \eqref{eq:non-monotone-CQ}.
	}
	\textbf{Output:} Sequence of iterates $\{X^k\}$.
\end{algorithm2e}

Next we prove the convergence of \crefalg{alg:non-monotone gradient} on a general Riemannian manifold $\manifold$.
Note that, in terms of convergence analysis, the only relevant difference between \crefalg{alg:non-monotone gradient} and \cite[Algorithm~1]{hu2019brief} is that the latter assumes that the retraction $\mathcal{R}$ is globally defined, whereas we only assume that, for all $X\in\manifold$, ${\cal R}_X$ is defined in a neighborhood of $0_X$ in $\mathrm{T}_X\manifold$. In other words,
we only assume that, for every $X\in\manifold$, there exists a ball $\mathcal{B}^r_{0_{X}}:=\{Z\in\mathrm{T}_X\manifold: \norm{Z}_{X}< r\}\subseteq\dom(\mathcal{R})$.
Hence, the convergence result in \cite[Theorem~3.3]{hu2019brief} does not directly apply to our case.


First, 
we show that \crefalg{alg:non-monotone gradient} does not abort.
\begin{proposition}\label{proposition:infinite-sequence}
	\crefalg{alg:non-monotone gradient} generates an infinite sequence of iterates.
\end{proposition}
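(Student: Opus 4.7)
The plan is to show by induction on $k$ that \crefalg{alg:non-monotone gradient} completes iteration $k$, i.e., that a step size of the form $\gamma_k \delta^h$ exists satisfying both (a) $\gamma_k \delta^h Z^k \in \dom(\mathcal{R})$ and (b) the non-monotone condition~\eqref{eq:non-monotone}. Since the search direction is $Z^k = -\rgradg{X^k}$, I will split the argument into the trivial case $\rgradg{X^k} = 0_{X^k}$ and the interesting case $\rgradg{X^k} \neq 0_{X^k}$, and then use the local-domain assumption on $\mathcal{R}$ together with a first-order Taylor expansion of $f \circ \mathcal{R}_{X^k}$ at $0_{X^k}$ to force acceptance of all sufficiently small step sizes.

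A preliminary lemma I would establish by induction is that $c_k \geq f(X^k)$ for every $k \geq 0$. The base case $c_0 = f(X^0)$ is built in. For the inductive step, the non-monotone acceptance at iteration $k$ together with $\rmetrick{\rgradg{X^k}, Z^k} = -\|\rgradg{X^k}\|_{X^k}^2 \leq 0$ yields $f(X^{k+1}) \leq c_k$, and then $c_{k+1}$, being a convex combination of $c_k$ and $f(X^{k+1})$ by~\eqref{eq:non-monotone-CQ}, satisfies $c_{k+1} \geq f(X^{k+1})$.

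Now fix $k$. If $\rgradg{X^k} = 0_{X^k}$, then $Z^k = 0$, which lies in $\dom(\mathcal{R})$, and the condition~\eqref{eq:non-monotone} collapses to $f(X^k) \leq c_k$, already true; so $h = 0$ works. Otherwise $\rmetrick{\rgradg{X^k}, Z^k} < 0$. By the standing hypothesis that $\mathcal{R}_X$ is defined in a neighborhood of $0_X$ for every $X \in \manifold$, there is $r > 0$ with $\mathcal{B}^r_{0_{X^k}} \subseteq \dom(\mathcal{R})$, so once $h$ is large enough, $\|\gamma_k \delta^h Z^k\|_{X^k} < r$, which gives (a). For (b), a first-order expansion of $t \mapsto f(\mathcal{R}_{X^k}(tZ^k))$ at $t = 0$, using $\frac{\mathrm{d}}{\mathrm{d}t}\mathcal{R}_{X^k}(tZ^k)|_{t=0} = Z^k$ and the defining relation~\eqref{eq:Riemannian_gradient} of the Riemannian gradient, together with $c_k \geq f(X^k)$, yields
\begin{equation*}
f(\mathcal{R}_{X^k}(tZ^k)) - c_k - \beta t\,\rmetrick{\rgradg{X^k}, Z^k}
\leq (1-\beta)\, t\,\rmetrick{\rgradg{X^k}, Z^k} + o(t),
\end{equation*}
whose right-hand side is negative for all sufficiently small $t > 0$. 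Hence both (a) and (b) hold for all sufficiently large $h$, the smallest such $h$ exists, and induction produces an infinite sequence of iterates.

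The only ingredient not already present in the analysis of~\cite[Theorem~3.3]{hu2019brief} is accommodating a non-globally-defined retraction, and I do not expect a real obstacle: the local-domain hypothesis is tailored precisely so that (a) is forced by backtracking, after which the standard non-monotone Taylor argument takes care of (b).
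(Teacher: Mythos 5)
Your proof is correct and follows essentially the same route as the paper's: establish $c_k \geq f(X^k)$ (the paper delegates this to \cite[Lemma~1.1]{zhang2004nonmonotone}, you prove it inline by induction through~\eqref{eq:non-monotone-CQ}), then use the local-domain hypothesis to force the backtracked step into $\dom(\mathcal{R})$ and a first-order expansion of $t\mapsto f(\mathcal{R}_{X^k}(tZ^k))$ to force acceptance of all small enough step sizes. Your explicit separation of the $\rgradg{X^k}=0_{X^k}$ case is a welcome refinement: the paper's chain of inequalities tacitly assumes $\rmetrick{\rgradg{X^k},Z^k}<0$, which your case split makes rigorous without changing the substance of the argument.
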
 
\begin{proof}
        Let $X^k$ be the current iterate.
	In view of $Z^k=-\rgradg{X^k}$, by applying \cite[Lemma~1.1]{zhang2004nonmonotone} to the Riemannian case, it yields $f(X^k)\leq c_k$.  Since $\mathcal{R}$ is locally defined and $\delta,\beta\in(0,1)$, it follows that
	\begin{align*}
	\lim\limits_{h\to +\infty}\frac{f\dkh{{\cal R}_{X^k}(\gamma_k \delta^h Z^k)}-c_k}{\gamma_k \delta^h}&\leq\lim\limits_{h\to +\infty}\frac{f\dkh{{\cal R}_{X^k}(\gamma_k \delta^h Z^k)}-f(X^k)}{\gamma_k \delta^h}\\
	&=\rmetrick{\rgradg{X^k},Z^k}
	<\beta\rmetrick{\rgradg{X^k},Z^k}.
	\end{align*}
	It implies that there exists $\bar{h}\in\mathbb{N}$ such that $t_k=\gamma_k\delta^{\bar{h}}\in(0,\frac{r}{\|Z^k\|_{X^k}})$
and the non-monotone condition \eqref{eq:non-monotone} hold. It means that $t_k Z^k\in\mathcal{B}^r_{0_{X^k}}\subseteq\dom(\mathcal{R})$, and hence it is accepted.
\end{proof}

Next, we give the proof of the convergence for \crefalg{alg:non-monotone gradient}.

\begin{theorem}\label{theorem:convergence}
	Let $\{X^k\}$ be an infinite sequence of iterates generated by \crefalg{alg:non-monotone gradient}. Then every accumulation point $X^*$ of $\{X^k\}$ such that $0_{X^*}\in\mathrm{T}_{X^*}\manifold$ in the interior of $\dom(\mathcal{R})$,  is a critical point of $f$, i.e., $\rgradg{X^*}=0$.
\end{theorem}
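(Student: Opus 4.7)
The plan is to adapt the Zhang--Hager non-monotone line search convergence analysis \cite{zhang2004nonmonotone} (as used on manifolds in \cite[\S 3.3]{hu2019brief}) to accommodate the only-locally-defined retraction. The argument proceeds in four steps.

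First, I would establish that $\{c_k\}$ is non-increasing and that $f(X^k)\leq c_k$ for all $k$, by the same induction as in \cite[Lemma~1.1]{zhang2004nonmonotone} (the proof is algebraic and carries over verbatim to the manifold setting). Combining the non-monotone acceptance condition~\eqref{eq:non-monotone} with $Z^k=-\rgradg{X^k}$ and with the update rule~\eqref{eq:non-monotone-CQ}, I would derive the key recursion
\begin{equation*}
c_{k+1}\;\leq\; c_k\;-\;\frac{\beta\, t_k}{q_{k+1}}\,\bigl\|\rgradg{X^k}\bigr\|_{X^k}^{2}.
\end{equation*}

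Second, I would argue by contradiction: assume $X^*$ is an accumulation point with $\rgradg{X^*}\neq 0$, and let $\{X^{k_j}\}$ be a subsequence converging to $X^*$. By continuity of the Riemannian gradient (which is smooth in $X$ by \cref{proposition:rgrad}), there exists $\epsilon>0$ such that $\|\rgradg{X^{k_j}}\|_{X^{k_j}}\geq\epsilon$ for $j$ large. Also, since $f(X^{k_j})\to f(X^*)$ is finite and $c_k\geq f(X^k)$ is non-increasing, $c_k$ converges to some finite limit, hence $c_k-c_{k+1}\to 0$.

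Third---and this is the main obstacle relative to the globally-defined-retraction case---I must produce a uniform positive lower bound on $t_{k_j}$. Since $0_{X^*}$ lies in the interior of $\dom(\mathcal{R})$ and $\mathcal{R}$ is continuous, there is a neighborhood $\mathcal{U}$ of $X^*$ in $\manifold$ and a radius $r_0>0$ such that the closed $g_\rho$-ball of radius $r_0$ in $\mathrm{T}_{X}\manifold$ lies in $\dom(\mathcal{R})$ for every $X\in\mathcal{U}$. On the compact set $\{(X,Z):X\in\overline{\mathcal{U}},\,\|Z\|_X\leq r_0\}$, the composed map $(X,t,Z)\mapsto f(\mathcal{R}_X(tZ))$ is $C^1$, so a standard Taylor/descent-lemma argument gives a constant $L>0$ with
\begin{equation*}
f(\mathcal{R}_{X^{k_j}}(tZ^{k_j}))\;\leq\; f(X^{k_j})+t\,\rmetric{\rgradg{X^{k_j}},Z^{k_j}}+\tfrac{L}{2}t^2\|Z^{k_j}\|_{X^{k_j}}^2
\end{equation*}
uniformly for $j$ large and $t\|Z^{k_j}\|_{X^{k_j}}\leq r_0$. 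Using $Z^{k_j}=-\rgradg{X^{k_j}}$ and $f(X^{k_j})\leq c_{k_j}$, this yields acceptance of~\eqref{eq:non-monotone} whenever $t\leq\min\{2(1-\beta)/L,\,r_0/\|Z^{k_j}\|_{X^{k_j}}\}$; the backtracking rule then guarantees $t_{k_j}\geq t_{\min}>0$.

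Finally, combining the recursion with the lower bounds $\|\rgradg{X^{k_j}}\|\geq\epsilon$, $t_{k_j}\geq t_{\min}$, and the fact that $\{q_k\}$ is bounded above (since $\alpha\in[0,1]$ gives $q_k\leq 1/(1-\alpha)$ when $\alpha<1$, and $q_k=k+1$ with the $1/q_{k+1}$ factor still summable-blocked by $c_k-c_{k+1}\to 0$ when $\alpha=1$; either case yields $c_{k_j}-c_{k_j+1}\geq\mathrm{const}>0$ along the subsequence), we obtain a contradiction with $c_k-c_{k+1}\to 0$. Hence $\rgradg{X^*}=0$. The main subtlety is the third step: without global definition of $\mathcal{R}$, the uniform Taylor estimate must be localized via the interior-of-domain hypothesis at $X^*$, rather than assumed globally as in \cite{hu2019brief}.
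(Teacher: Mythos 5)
Your high-level strategy is sound and correctly identifies the new difficulty --- the retraction is only locally defined, so you need a ``tube'' neighborhood of $0_{X^*}$ in the tangent bundle contained in $\dom(\mathcal{R})$ --- and this is exactly how the paper localizes the argument (via the neighborhood $\mathcal{N}_{0_{X^*}}$). However, from there your route diverges: you derive a uniform step-size lower bound $t_{k_j}\geq t_{\min}$ via a descent lemma on a compact tube, whereas the paper follows the Bertsekas/Absil pattern (see \cite[Theorem~4.3.1]{absil2009optimization}): it first shows $t_k\to 0$ along the convergent subsequence, deduces that the previous (rejected) step $t_k/\delta$ fails the sufficient-decrease test, applies the mean value theorem to $f\circ\mathcal{R}_{X^k}$ along the segment, and passes to the limit using only continuity of $\mathrm{D}(f\circ\mathcal{R})$. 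The paper's approach thus needs no quantitative Lipschitz constant, only joint continuity of the derivative.

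This is where your step~3 has a genuine gap: you assert that a ``standard Taylor/descent-lemma argument gives a constant $L>0$'' with a uniform quadratic upper bound. The descent lemma requires Lipschitz continuity of the gradient of $f\circ\mathcal{R}_X$, which is \emph{not} implied by the paper's standing assumption that $f$ is merely $C^1$: on a compact set a $C^1$ function has uniformly continuous (not Lipschitz) gradient, so no finite $L$ need exist. To make your strategy work under the paper's hypotheses you would have to replace the quadratic remainder $\tfrac{L}{2}t^2\|Z\|^2$ by a modulus-of-continuity bound $\omega(t\|Z\|)\,t\|Z\|$ with $\omega(s)\to0$ as $s\to0$, which still yields a step-size lower bound but requires a slightly more careful argument than ``descent lemma''. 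The paper's MVT route sidesteps this entirely.

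Your step~4 contains a second gap, for the endpoint $\alpha=1$. Your parenthetical claims ``$\{q_k\}$ is bounded above'' and yet in the same breath says $q_k=k+1$ when $\alpha=1$; the conclusion ``either case yields $c_{k_j}-c_{k_j+1}\geq\mathrm{const}>0$'' is simply false when $\alpha=1$, because then the recursion only gives $c_{k_j}-c_{k_j+1}\geq \beta t_{\min}\epsilon^2/(k_j+2)\to 0$, which is no contradiction with $c_k-c_{k+1}\to 0$. The paper keeps the weight $1/q_{k+1}\leq 1/(k+2)$ inside the telescoping sum rather than pulling out a constant, precisely to avoid assuming $q_k$ bounded; if you want the uniform-step-size route to cover $\alpha=1$ you must argue along those lines rather than claiming a per-step positive lower bound on $c_{k_j}-c_{k_j+1}$. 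For $\alpha<1$, where $q_k\leq 1/(1-\alpha)$, your argument is fine.
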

\begin{proof}
	We adapt the proof strategy of \cite[Theorem~3.3]{hu2019brief} (that invokes~\cite[Theorem~4.3.1]{absil2009optimization}, which itself is a generalization to manifolds of the proof of~\cite[Proposition~1.2.1]{Ber95}) in order to handle the locally defined retraction. The adaptation consists in defining and making use of the neighborhood $\mathcal{N}_{0_{X^*}}$. 

	Since $X^*$ is an accumulation point of $\{X^k\}$, there exists a subsequence $\{X^k\}_{k\in\mathcal{K}}$ that converges to it. 
	In view of \eqref{eq:non-monotone} and \eqref{eq:non-monotone-CQ}, it holds
	that $q_{k+1}=1+\alpha q_k=1+\alpha+\alpha^2q_{k-1}=\dots=\sum_{i=0}^{k+1}\alpha^i
	 \le k+2
	 $.
	 Moreover, it follows that
	\begin{align*}
	c_{k+1}-c_k&=\frac{\alpha q_{k}c_{k}+f(X^{k+1})}{q_{k+1}} -c_k=\frac{\alpha q_{k}c_{k}+f(X^{k+1})}{q_{k+1}} -\frac{\alpha q_{k}+1}{q_{k+1}}c_k\\
	&=\frac{f(X^{k+1})-c_k}{q_{k+1}} \leq -\frac{\beta t_k\norm{\rgradg{X^k}}^2_{X^k}}{q_{k+1}}<0.
	\end{align*}
	Hence, $\{c_k\}$ is monotonically decreasing. Since $f(X^k)\leq c_k$ (in the proof of \cref{proposition:infinite-sequence}) and $\{f(X^k)\}_{k\in\mathcal{K}}\to f(X^*)$, it readily follows that $ c_\infty:=\lim_{k\to+\infty}c_k>-\infty$.
	Summing above inequalities and using $q_{k+1}\le k+2$, it turns out
	$$\sum_{k=0}^{\infty}\frac{\beta t_k\norm{\rgradg{X^k}}^2_{X^k}}{k+2}
	\leq \sum_{k=0}^{\infty}\frac{\beta t_k\norm{\rgradg{X^k}}^2_{X^k}}{q_{k+1}}
	\leq
	\sum_{k=0}^{\infty}(c_k-c_{k+1})
	=c_0-c_\infty<\infty.
	$$
	This inequality implies that
	\begin{equation}\label{eq:convergence-1}
	\lim\limits_{k\to+\infty} t_k\norm{\rgradg{X^k}}^2_{X^k}=0, \quad k\in\mathcal{K}.
	\end{equation}
	
	For the sake of contradiction, suppose that $X^*$ is not a critical point of $f$, i.e., that $\rgradg{X^*}\neq0$.  
It readily follows from \eqref{eq:convergence-1} that $\{t_k\}_{k\in\mathcal{K}}\to0$. By the construction of \crefalg{alg:non-monotone gradient}, the step size $t_k$ has the form of $t_k=\gamma_k \delta^h$ with $\gamma_k\ge\gamma_{\min}>0$.
Since $0_{X^*}$ is in the interior of $\dom(\mathcal{R})$, there exists a neighborhood $\mathcal{N}_{0_{X^*}}:=\{(X,Z)\in\mathrm{T}\manifold: \mathrm{dist}^2(X,X^*)+ \norm{Z}_{X}^2 < r\}\subseteq\dom(\mathcal{R})$.
Due to $\{t_k\}_{k\in\mathcal{K}}\to0$, it follows that there exists $\bar{k}\in\mathcal{K}$ and $h_k\in\mathbb{N}$ with $h_k\geq1$ such that for all $k>\bar{k}$, $t_k=\gamma_k \delta^{h_k}$ and $(X^k,\frac{t_k}{\delta} Z^k)\in\mathcal{N}_{0_{X^*}}$.
Since $h_k$ is the smallest integer such that the non-monotone condition \eqref{eq:non-monotone} holds, it follows that $\frac{t_k}{\delta} Z^k$ does not satisfy \eqref{eq:non-monotone}, i.e.,
	\begin{align}\label{eq:non-monotone-contradiction}
	f(X^k) - f\dkh{{\cal R}_{X^k}(\frac{t_k}{\delta} Z^k)} 
	\leq
	c_k - f\dkh{{\cal R}_{X^k}(\frac{t_k}{\delta} Z^k)}
	< \beta \frac{t_k}{\delta}  \norm{\rgradg{X^k}}^2_{X^k}.
	\end{align}
    The mean value theorem for \eqref{eq:non-monotone-contradiction} ensures that there exists $\bar{t}_k\in\fkh{0,\frac{t_k}{\delta}}$ such that 
	\[
\mathrm{D}(f\circ {\cal R}_{X^k})(\bar{t}_k Z^k)[Z^k]
<\beta \norm{\rgradg{X^k}}^2_{X^k},\quad \mbox{for all~} {k}\in\mathcal{K}, k>\bar{k}.
\]
We now take the limit in the above inequality as $k\to\infty$ 
over $\mathcal{K}$. 
Using the fact that $\mathrm{D}(f\circ {\cal R}_{X^*})(0_{X^*}) = \mathrm{D}f(X^*)$ in view of the defining properties of a retraction, we obtain $\norm{\rgradg{X^*}}^2_{X^*}\leq\beta \norm{\rgradg{X^*}}^2_{X^*}$. Since $\beta<1$, this is a contradiction with the supposition that $\rgradg{X^*}\neq0$.
\end{proof}

\crefalg{alg:non-monotone gradient} applies to $\manifold = \symplectic$ as follows. Pick a constant $\rho>0$ and one of the orthonormalization conditions (I) or (II) for $X_\perp$ in order to make~\eqref{eq:rmetric} a bona-fide Riemannian metric. In line~\ref{alg:nmg:grad}, the gradient is then as stated in~\cref{proposition:rgrad}. Finally, choose the retraction $\mathcal{R}$ as either the quasi-geodesic curve~\eqref{eq:geo-exp} or the symplectic Cayley transform~\eqref{eq:R-cay}.
The initialization techniques for $X^0\in\symplectic$ 
and the stopping criterion
will be discussed in \cref{subsection:Stopping criteria}. 

\begin{corollary}\label{theorem:convergence-symplectic}
	Apply \crefalg{alg:non-monotone gradient} to $\symplectic$ as specified in the previous paragraph. 
	 Let $\{X^k\}$ be an infinite sequence of iterates generated by \crefalg{alg:non-monotone gradient}. Then every accumulation point $X^*$ of $\{X^k\}$ is a critical point of $f$, i.e., $\rgradg{X^*}=0$.
\end{corollary}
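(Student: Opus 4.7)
The plan is to invoke Theorem~\ref{theorem:convergence} on the manifold $\manifold = \symplectic$ equipped with the canonical-like Riemannian metric $g_\rho$. First I would check that the prerequisites of the general framework are in place: under either orthonormalization condition (I) or (II), $g_\rho$ defined in \eqref{eq:rmetric} is a bona-fide smooth Riemannian metric (this was argued in the discussion following the definition of $B_X$ and the subsequent proposition), the Riemannian gradient used as the search direction in line~\ref{alg:nmg:grad} is given by \cref{proposition:rgrad}, and the objective $f$ is continuously differentiable by the standing assumption on problem~\eqref{prob:original}.

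The only nontrivial step is to verify the hypothesis of \cref{theorem:convergence} at an accumulation point $X^*$, namely that $0_{X^*} \in \mathrm{T}_{X^*}\symplectic$ lies in the interior of $\dom(\mathcal{R})$. For the quasi-geodesic retraction $\mathcal{R}^{\qgeo}$, \cref{lemma:retraction-geo} already establishes that it is globally defined on the entire tangent bundle $\mathrm{T}\symplectic$, so $\dom(\mathcal{R}^{\qgeo}) = \mathrm{T}\symplectic$ and the condition is automatic at every point.

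For the symplectic Cayley retraction $\mathcal{R}^{\cay}$, I would argue as follows. From \cref{def:R-cay}, $\mathcal{R}^{\cay}_X(Z)$ is defined whenever $I - \tfrac12 S_{X,Z} J$ is invertible, where $S_{X,Z}$ depends smoothly (hence continuously) on $(X,Z) \in \mathrm{T}\symplectic$ and satisfies $S_{X,0_X} = 0$. Therefore $I - \tfrac12 S_{X^*,0_{X^*}} J = I$ is trivially invertible. By continuity of the determinant, invertibility persists on an open neighborhood of $(X^*, 0_{X^*})$ in $\mathrm{T}\symplectic$, which shows that $0_{X^*}$ is in the interior of $\dom(\mathcal{R}^{\cay})$, as required.

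With this verified, \cref{theorem:convergence} delivers the conclusion directly: every accumulation point $X^*$ of $\{X^k\}$ satisfies $\rgradg{X^*} = 0$. The only subtlety — and really the whole reason \cref{theorem:convergence} was stated in its extended form allowing locally defined retractions — is the non-global nature of $\mathcal{R}^{\cay}$; the continuity argument above dispatches it cleanly without needing to assume compactness of sublevel sets of $f$ or to control the iterates a priori.
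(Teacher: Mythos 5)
Your proposal is correct and follows essentially the same route as the paper: both dispose of the quasi-geodesic case by global definedness and, for the Cayley retraction, both use continuity of $(X,Z)\mapsto\det(I-\tfrac12 S_{X,Z}J)$ to conclude that the domain of $\mathcal{R}^{\cay}$ is open and hence contains a neighborhood of $0_{X^*}$. The paper phrases this as showing the full domain is open (the preimage of $\{0\}$ under a continuous map is closed, so its complement is open), whereas you argue locally around $(X^*,0_{X^*})$; the two are interchangeable.
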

\begin{proof}
	First, the quasi-geodesic retraction is globally defined. Hence, \cref{theorem:convergence} can be directly applied. 
	
Next, we consider the Cayley-based algorithm. 
In order to conclude by invoking \cref{theorem:convergence}, it is sufficient to show that, for all $X\in\symplectic$, $0_X\in\TX$ is in the interior of $\dom({\cal R}^{\cay})$. In view of the properties of retractions, we have that $0_X\in\dom(\mathcal{R}^{\cay})$, and we finish the proof by showing that all points of $\dom(\mathcal{R}^{\cay})$ are in its interior, namely, $\dom(\mathcal{R}^{\cay})$ is open. To this end, let $F:\mathrm{T}\symplectic\to\R:(X,Z)\mapsto\det(I-\frac12 S_{X,Z})$, where $S_{X,Z}$ is as in \cref{def:R-cay}. Since $F$ is continuous and $\{0\}$ is closed, it follows that $F\inv(0)=\{(X,Z)\in\mathrm{T}\symplectic: F(X,Z)=0\}$ is a closed set. Its complement in the tangent bundle is thus open, and it is also the domain of $\mathcal{R}^{\cay}$.
\end{proof}

\section{Numerical experiments}\label{sec:Numerical Experiment}
In this section, we report the numerical performance of \crefalg{alg:non-monotone gradient}. Both methods based on the {quasi-geodesics} \eqref{eq:geo-exp}  and the symplectic Cayley transform \eqref{eq:R-cay}
are evaluated. We first introduce implementation details in \cref{subsection:Stopping criteria}. To determine default settings, we investigate the parameters of our proposed algorithms in \cref{subsection:default setting}. Finally, the efficiency of \crefalg{alg:non-monotone gradient} is assessed
by solving several different problems. The 
experiments are performed on a workstation with two Intel(R) Xeon(R) Processors Silver 4110 (at 2.10GHz$\times 8$, 12M Cache) and 384GB of RAM running 
MATLAB R2018a under Ubuntu 18.10. The code that produced the results is available from \href{https://github.com/opt-gaobin/spopt}{https://github.com/opt-gaobin/spopt}.

\subsection{Implementation details}\label{subsection:Stopping criteria}
As we mentioned in \cref{subsubsec:rgrad}, we propose two strategies to compute the Riemannian gradient $\rgrad{X}$. Both algorithms with type (I) and type (II) perform well in our preliminary experiments. In this section, \crefalg{alg:non-monotone gradient} with the symplectic Cayley transform is denoted by ``Sp-Cayley", and its instances using type (I) and (II) for the gradient are represented as ``Sp-Cayley-I" and ``Sp-Cayley-II", respectively. 
Similarly,  quasi-geodesic algorithms  are denoted by ``Quasi-geodesics", ``Quasi-geodesics-I" and ``Quasi-geodesics-II".

We adopt formula \eqref{eq:rgrad-2}, i.e., $H_X\nabla\bar{f}(X) (XJ)\zz JX +XJ (H_X\nabla\bar{f}(X))\zz JX$, to assemble $\rgrad{X}$ for all the algorithms.  In order to save flops and obtain a good feasibility (see \cref{fig:geoVSsymplecticCayley}), we choose \eqref{eq:cayley-grad-simple} to compute the Cayley retraction.  Note that we keep the calculation of $(XJ)\zz JX$ in the first term of $\rgrad{X}$, although it is trivial that $(XJ)\zz JX=I$ for $X\in\symplectic$. This is due to our observation that the feasibility of Quasi-geodesics gradually degrades
when we omit this calculation. 

At the beginning of \crefalg{alg:non-monotone gradient}, we need a feasible point $X^0\in\symplectic$ to start the iteration. The easiest way to generate a symplectic matrix is to choose the ``identity" matrix in $\symplectic$, namely, $I^0=\fkh{\begin{smallmatrix} 
	I_p & 0 & 0 & 0 \\ 0& 0 & I_p & 0
\end{smallmatrix}}\zz$. Moreover, by using \cref{proposition:exponential}, we suggest the following strategies to generate an initial point:
\begin{itemize}
	\item[1)] $X^0=I^0$;
	\item[2)] $X^0=I^0 e^{J(W+W\zz)}$, where $W$ is randomly generated by \texttt{W=randn(2*p,2*p)};
	\item[3)] $X^0$ is assembled by the first $p$ columns and $(n+1)$-th to $(n+p)$-th columns of $e^{J(W+W\zz)}$, where $W$ is randomly generated by \texttt{W=randn(2*n,2*n)}.
\end{itemize}
The matrix exponential is computed by the function $\texttt{expm}$ in MATLAB. Unless otherwise specified, we 
choose strategy 2) as our initialization.

For a stopping criterion, we check the following two conditions:
\begin{eqnarray}\label{eq:stop}
&\norm{\rgrad{X^k}}\ff\le\epsilon,&\\
\label{eq:stop1}
&\frac{\norm{X^k-X^{k+1}}\ff}{\sqrt{2n}} < \epsilon_x \quad  \mbox{and}\quad
\frac{\abs{f(X^k)-f(X^{k+1})}}{\abs{f(X^k)}+1} < \epsilon_f  &
\end{eqnarray}
with given tolerances $\epsilon,\epsilon_x,  \epsilon_f>0$. 
{We terminate the algorithms {once} one of the criteria \eqref{eq:stop}-\eqref{eq:stop1}} or
a maximum iteration number $\mathrm{MaxIter}$ is reached. The default tolerance parameters are chosen as 
$\epsilon = 10^{-5}$, $\epsilon_x = 10^{-5}$, $\epsilon_f = 10^{-8}$ and $\mathrm{MaxIter}=1000$. 
For parameters to control the non-monotone line search, we follow the choices in the code OptM\footnote{Available from \href{https://github.com/wenstone/OptM}{https://github.com/wenstone/OptM}.} \cite{WenYin2013}, specifically, $\beta=10^{-4}, \delta=0.1, \alpha=0.85$, as our default settings. In addition, we choose $\gamma_\mathrm{min}=10^{-15}$, $\gamma_\mathrm{max}=10^{15}$, 
and a trial step size $\gamma_k$ as in \eqref{eq:ABB}.

\subsection{Default settings of the algorithms}\label{subsection:default setting}
In this section, we study the performance and robustness of our algorithms by choosing different parameters. All the comparisons and results are based on a test problem, called the nearest symplectic matrix problem, which aims to calculate the nearest symplectic matrix to a~target matrix $A\in\Rnnpp$ with respect to the Frobenius norm, i.e., 
\begin{equation}\label{eq:nearest}
\min\limits_{X\in\symplectic} \norm{X-A}\fs.
\end{equation} 
{The special case of this problem on the symplectic group (i.e., $p=n$) was studied in~\cite{wu2010critical}}. 
In our experiments, $A$ is randomly generated by \texttt{A=randn(2*n,2*p)}, then it is scaled using \texttt{A=A/norm(A)}. 

\subsubsection{A comparison between quasi-geodesics and symplectic Cayley transform}\label{subsection:geoVScayley}
In \crefalg{alg:non-monotone gradient}, we can choose between two different retractions: quasi-geodesic and Cayley. 
In order to investigate the numerical performance of the two alternatives, we solve the nearest symplectic matrix problem \eqref{eq:nearest} of size $2000 \times 400$
and choose the parameter for the metric $g_{\rho}$ to be $\rho=1$.  We stop our algorithms only when $\mathrm{MaxIter}=120$ is reached. The evolution of the norm of the gradient and the feasibility violation $\norm{X\zz JX-J}\ff$ for both algorithms is shown in \cref{fig:geoVSsymplecticCayley}. 
It illustrates that Sp-Cayley performs better than Quasi-geodesics in terms of efficiency and feasibility.
Therefore, we choose Sp-Cayley as our default algorithm and the following experiments will focus on Sp-Cayley. 

\begin{figure}[htpb]
	\centering
	\subfigure[F-norm of Riemannian gradient]
	{\includegraphics[scale=.34]
		{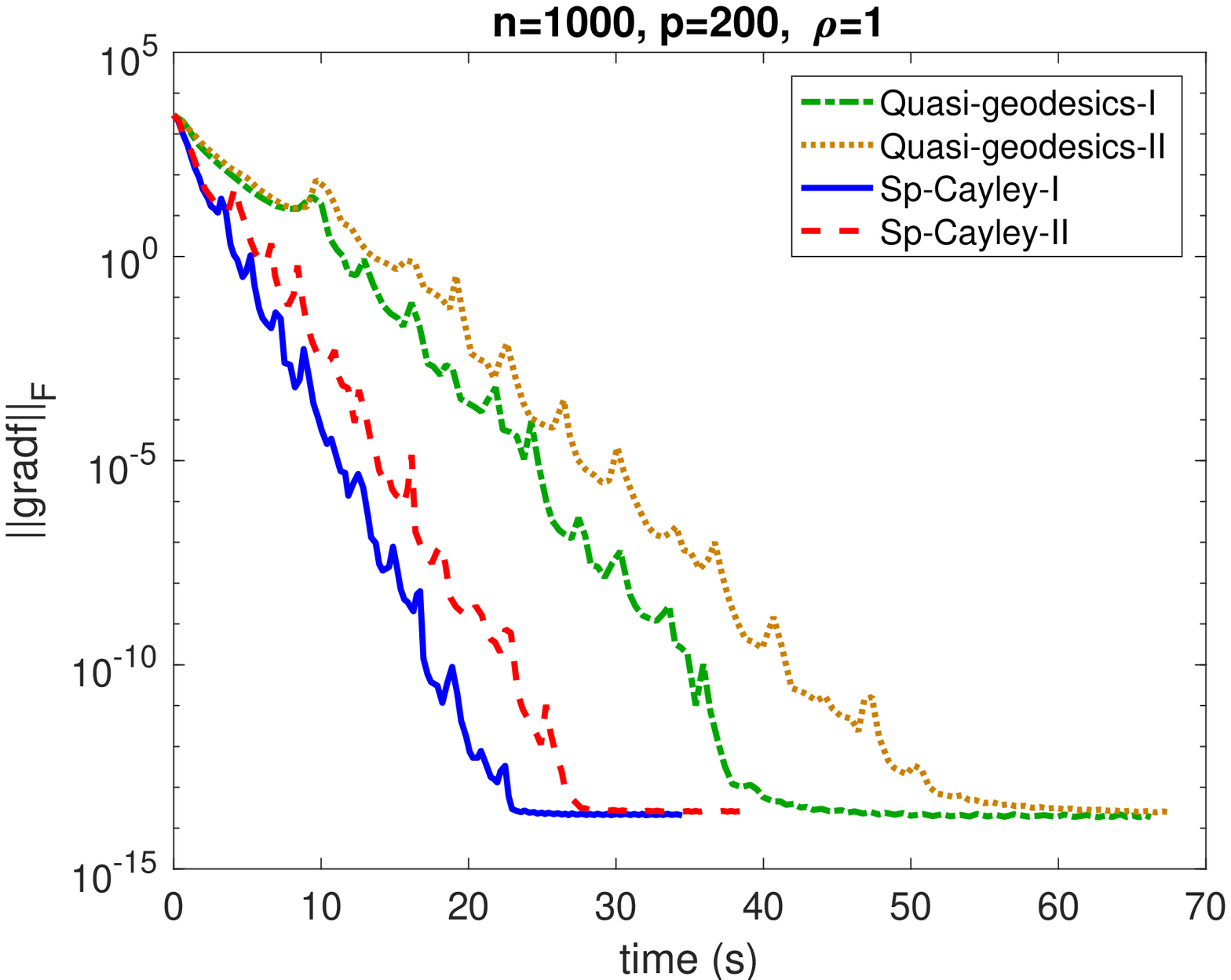}}
	\quad
	\subfigure[Feasibility violation]
	{\includegraphics[scale=.34]
		{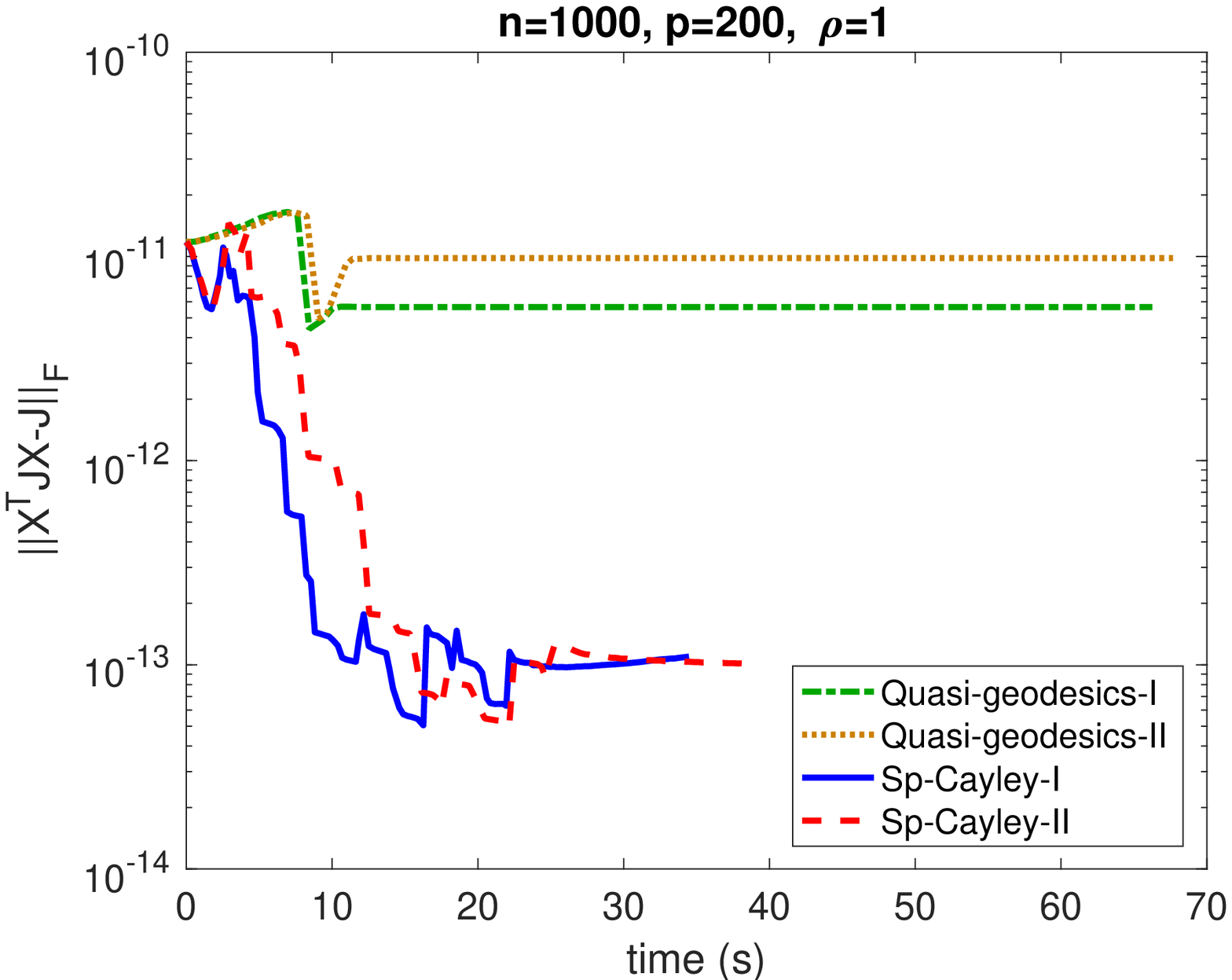}}
	\caption{A comparison of \crefalg{alg:non-monotone gradient} {based on the} quasi-geodesic {curve} and {the} symplectic Cayley transform\label{fig:geoVSsymplecticCayley}}
\end{figure}

\subsubsection{A comparison of different metrics}  \label{sec:exper-metrics}
In this section, we compare the performance of Sp-Cayley with different metrics.
Namely, we compare Sp-Cayley-I and Sp-Cayley-II for a~set of parameters $\rho=2^l$ with $l$ chosen from $\{-3,-2,-1,0,1,2,3\}$.  We run {both} algorithms 100 times on randomly generated nearest symplectic matrix problems of size $2000\times 40$. Note that for each instance, Sp-Cayley-I and Sp-Cayley-II use the same initial guess. In order to get an average performance, we stop algorithms only when $\norm{\rgrad{X^k}}\ff\le10^{-4}$.  A~summary of numerical results is reported in \cref{fig:metric}. It displays average iteration numbers and the feasibility violation for the two algorithms with different $\rho$. We can learn from {the} figures that:  
\begin{itemize}
	\item The value $\rho^*$ at which {Sp}-Cayley-I and Sp-Cayley-II achieve the best performance is approximately equal to $1/2$ and 1, respectively. The difference of $\rho^*$ is due to the different choice of $X_\perp$, which also has an effect on the metric $g_\rho$.  We have observed that $\rho^*$ may vary {for} different objective functions. This indicates that tuning $\rho$ for the problem class of interest may significantly improve the performance of \crefalg{alg:non-monotone gradient}.
	
	\item Sp-Cayley-I has a lower average iteration number than Sp-Cayley-II when $\rho\le 1$. Over all the variants considered in \cref{fig:metric}, Sp-Cayley-I with $\rho=1/2$ has the best performance.

	\item Both Sp-Cayley-I and Sp-Cayley-II show a loss of feasibility when $\rho$ becomes large. A possible reason is that the non-normalized second term of $H_X=JX_\perp X\zz_\perp J\zz +\frac{\rho}{2}XX\zz$ in the Riemannian gradient becomes dominant. 
\end{itemize}
According to the above observations,  we choose $\rho=1/2$ for Sp-Cayley-I and $\rho=1$ for Sp-Cayley-II as our default settings.

\begin{figure}[htpb]
	\centering
	\subfigure[Average iteration number]
	{\includegraphics[scale=.34]
		{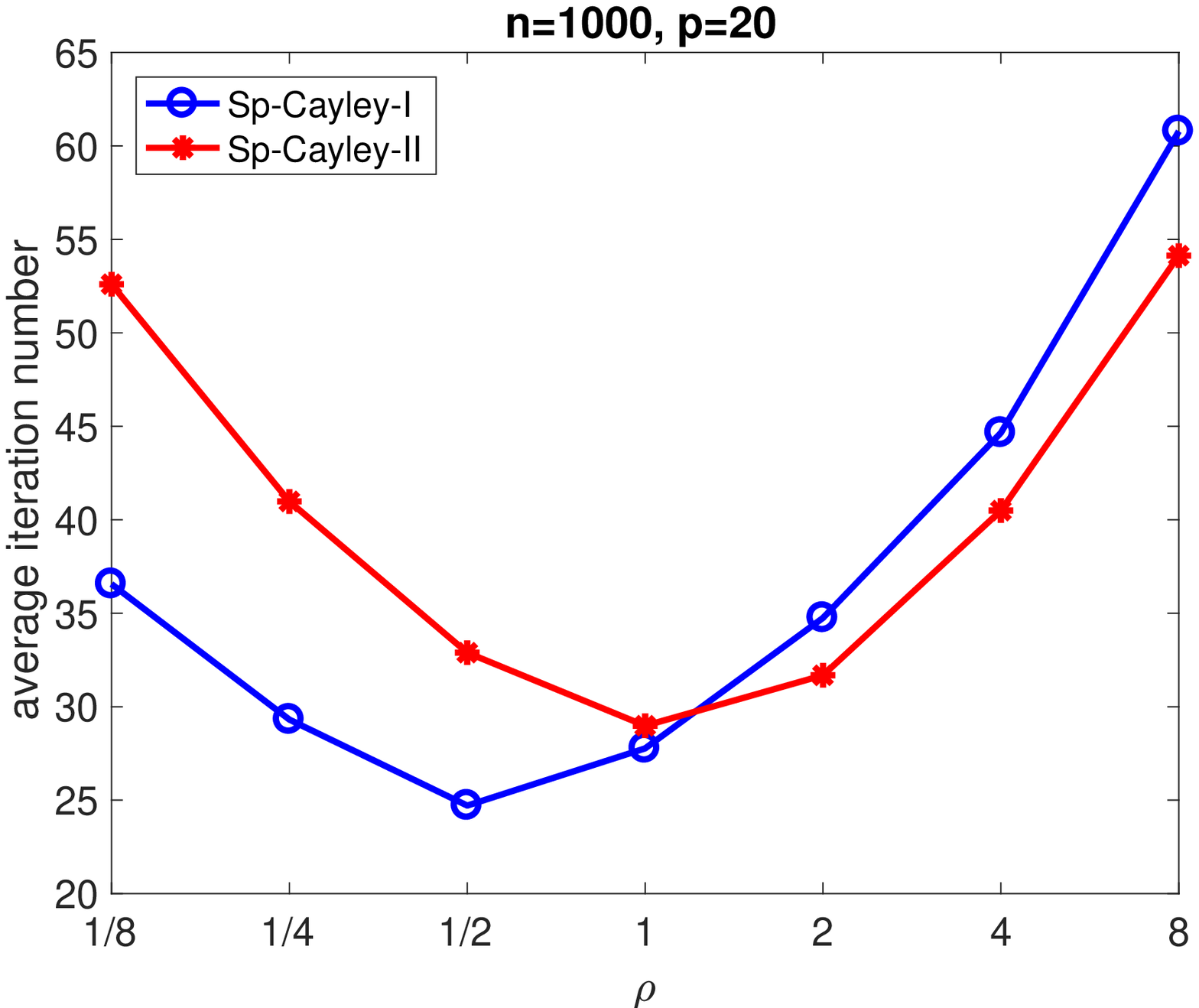}}
	\quad
	\subfigure[Average feasibility violation]
	{\includegraphics[scale=.34]
		{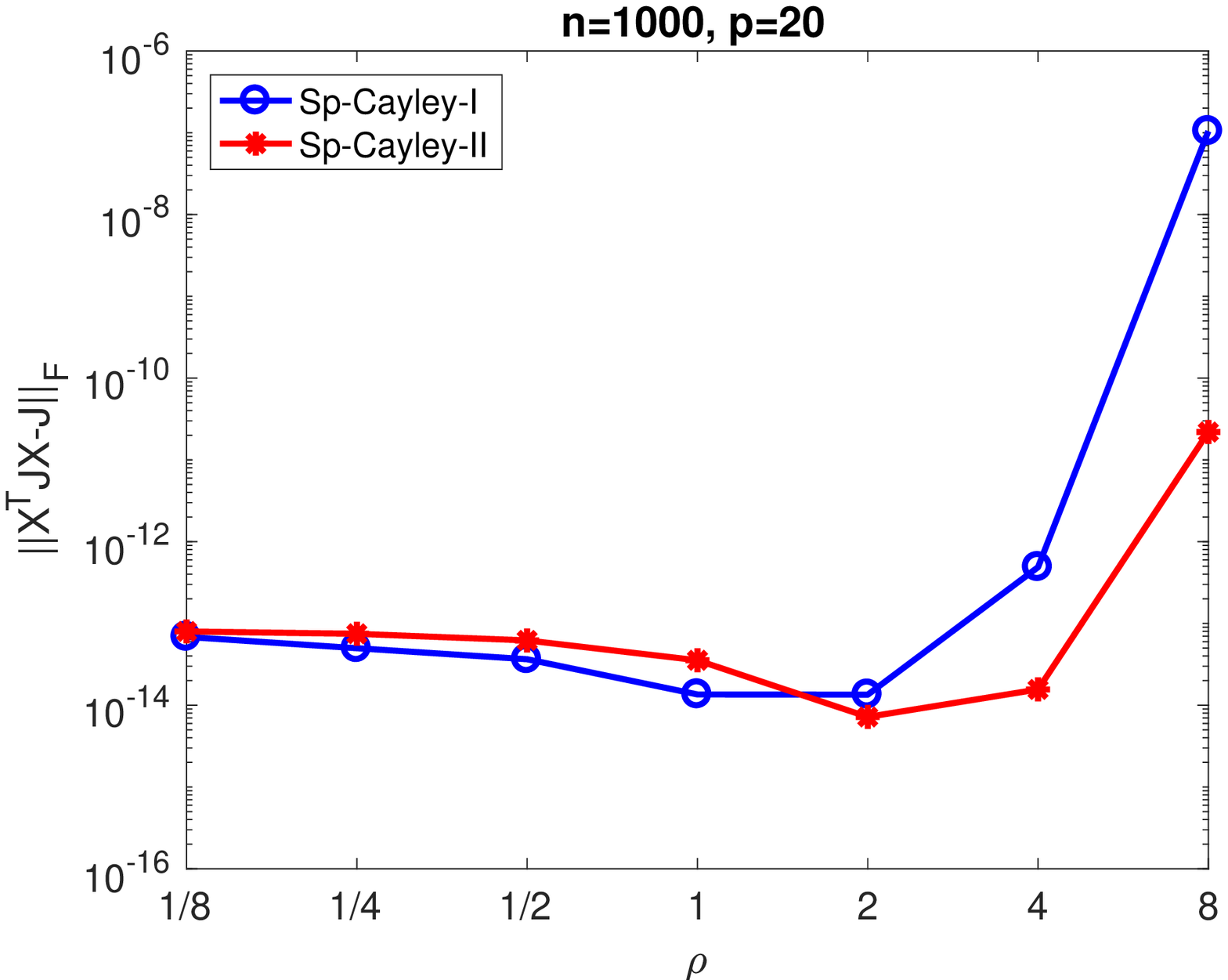}}
	\caption{A comparison of Sp-Cayley-I and Sp-Cayley-II with different parameter $\rho$\label{fig:metric}}
\end{figure}

\subsubsection{A comparison of different line search schemes}\label{subsec:line-search}
The non-monotone line search strategy (\cref{subsection:non-monotone}) depends on several parameters. The purpose of this section is to investigate which among those parameters have a significant impact on the performance of \crefalg{alg:non-monotone gradient}. 

First, we consider the choice of {the} trial step size $\gamma_k$ in \crefalg{alg:non-monotone gradient}. In our case, the ambient space is Euclidean. Thus, we can use the BB method proposed in \cite{WenYin2013} {and define}
\begin{align*}\label{eq:BB}
{\gamma_k^{\mathrm{BB}1}} := \frac{\jkh{S^{k-1},S^{k-1}}}{\abs{\jkh{S^{k-1},{Y^{k-1}}}}}, \quad
{\gamma_k^{\mathrm{BB}2}}  :=\frac{\abs{\jkh{S^{k-1},Y^{k-1}}}}{{\jkh{Y^{k-1},{Y^{k-1}}}}},
\end{align*}
where $S^{k-1} = X^k - X^{k-1}$ and $Y^{k-1} =\rgrad{X^k} -  \rgrad{X^{k-1}}$. Note that this differs from the Riemannian BB method in \cite{iannazzo2018riemannian} since it adopts the Euclidean inner product $\jkh{\cdot,\cdot}$ rather than $g_\rho$. 
This choice is cheaper in flops and we have observed that it speeds up the algorithm.
Owing to its efficiency, we further adopt the \emph{alternating} 
BB strategy \cite{Dai_Fletcher_2005} {and choose the trial step size as}
\begin{align}\label{eq:ABB}
\gamma_k^{\mathrm{ABB}}:= \left\{
\begin{array}{cl}
\gamma_k^{\mathrm{BB}1},& \mbox{for odd}~~k,\\
\gamma_k^{\mathrm{BB}2},& \mbox{for even}~~k.
\end{array}
\right.
\end{align}
We next 
compare $\gamma_k^{\mathrm{BB1}}$, $\gamma_k^{\mathrm{BB2}}$, $\gamma_k^{\mathrm{ABB}}$, and the step size 
\begin{equation*}
	\gamma_k^{\mathrm{M}}:=2\abs{\frac{f(X^k)-f(X^{k-1})}{\mathrm{D}f(X)[Z]}}
\end{equation*}
proposed in \cite[(3.60)]{nocedal2006numerical}, where $\gamma_k^{\mathrm{M}}$ is also used in the line search function in Manopt\footnote{A MATLAB toolbox for optimization on manifolds (available from \href{https://www.manopt.org/}{https://www.manopt.org/}).}~\cite{manopt}.
In this test, we opt for the monotone line search ($\alpha=0$) adopted with tolerances $\epsilon = 10^{-10}$, $\epsilon_x = 10^{-10}$, $\epsilon_f = 10^{-14}$.
\cref{fig:gamma} reveals
that the BB strategies greatly improve the performance of the Riemannian gradient method, and outperform the classical initial step size $\gamma^{\mathrm{M}}$
 in iteration number and function value decreasing. 
We have obtained similar results, omitted here, for Sp-Cayley-II. {Since} $\gamma_k^{\mathrm{ABB}}$ is the most efficient choice in this experiment, we employ it as our default setting henceforth.

\begin{figure}[htpb]
	\centering
	\subfigure[F-norm of Riemannian gradient]
	{\includegraphics[scale=.34]
		{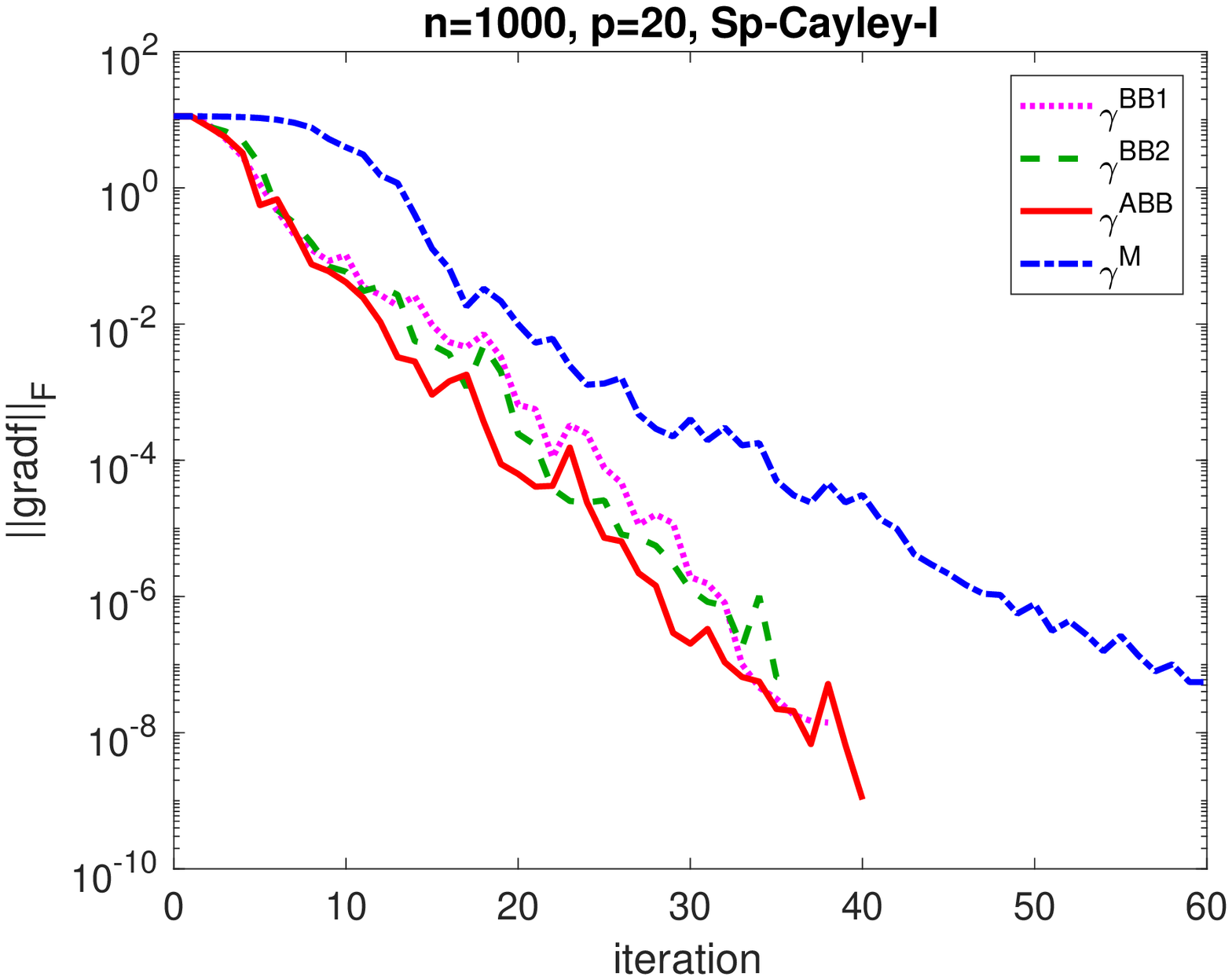}}
	\quad
	\subfigure[Function value]
	{\includegraphics[scale=.34]
		{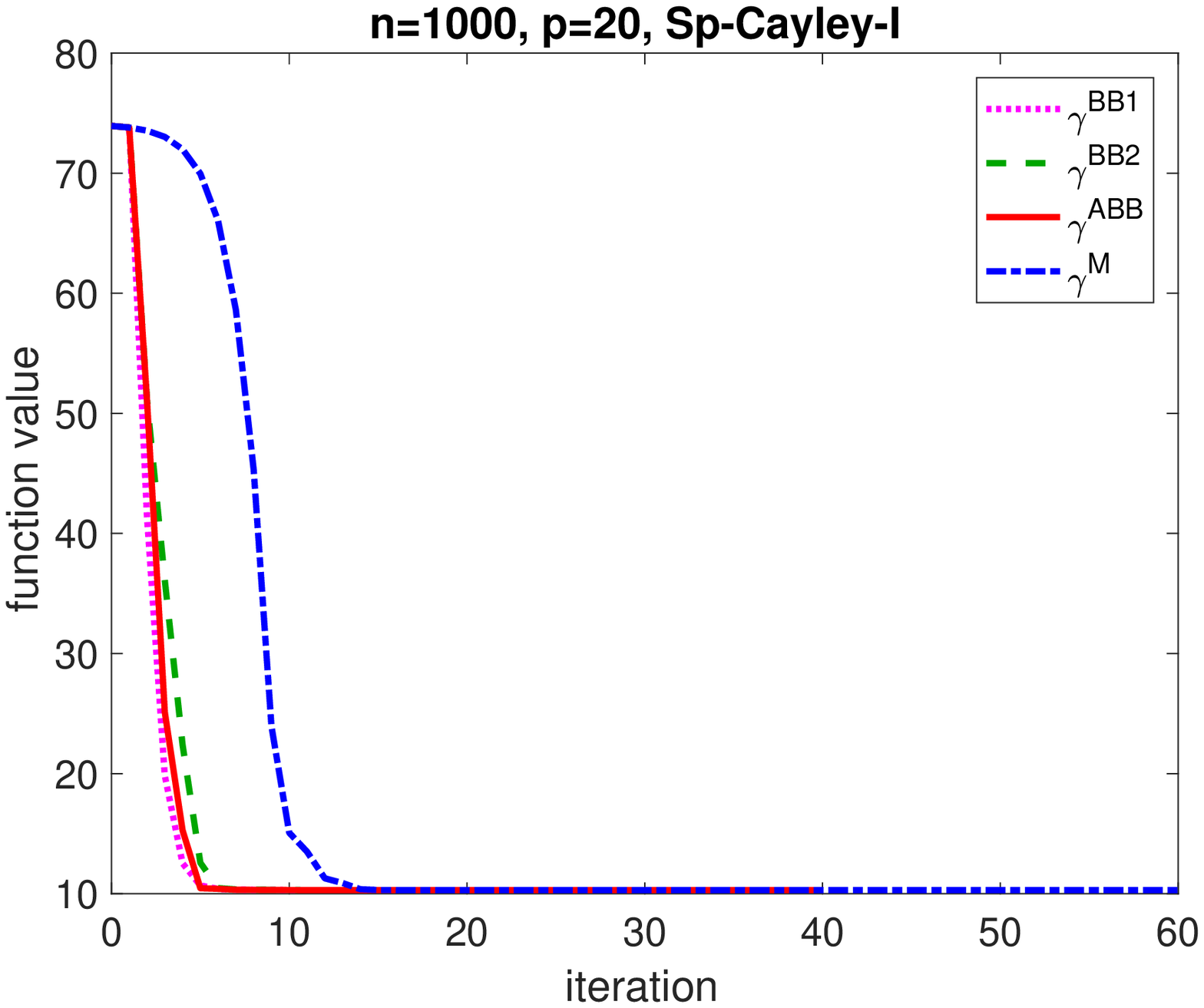}}
	\caption{A comparison {of} Sp-Cayley-I with different initial step size $\gamma_k$ {in the} monotone line search {scheme}\label{fig:gamma}}
\end{figure}

Next we investigate the impact of the parameter $\alpha$, which controls the degree of non-monotonicity. If $\alpha=0$, then condition~\eqref{eq:non-monotone} reduces to the usual monotone condition~\eqref{eq:monotone}. 
Here we scale the problem as \texttt{A=2*A/norm(A)} because we found that it reveals better the advantage that the non-monotone approach can have. We test Sp-Cayley-II
with $\alpha=0,0.85$, i.e., the monotone and non-monotone schemes.
The 
results are shown in \cref{fig:monotone}. 
The purpose of the non-monotone strategy is to make the line search condition~\eqref{eq:non-monotone} more prone than the monotone strategy to accept the trial step size $\gamma_k$, here $\gamma_k^{\mathrm{ABB}}$. We see that this results in a faster convergence in this experiment. Since the non-monotone condition \eqref{eq:non-monotone} works well in our problem, thus we choose it as a default setting henceforth. 

\begin{figure}[htpb]
	\centering
	\subfigure[F-norm of Riemannian gradient]
	{\includegraphics[scale=.34]
		{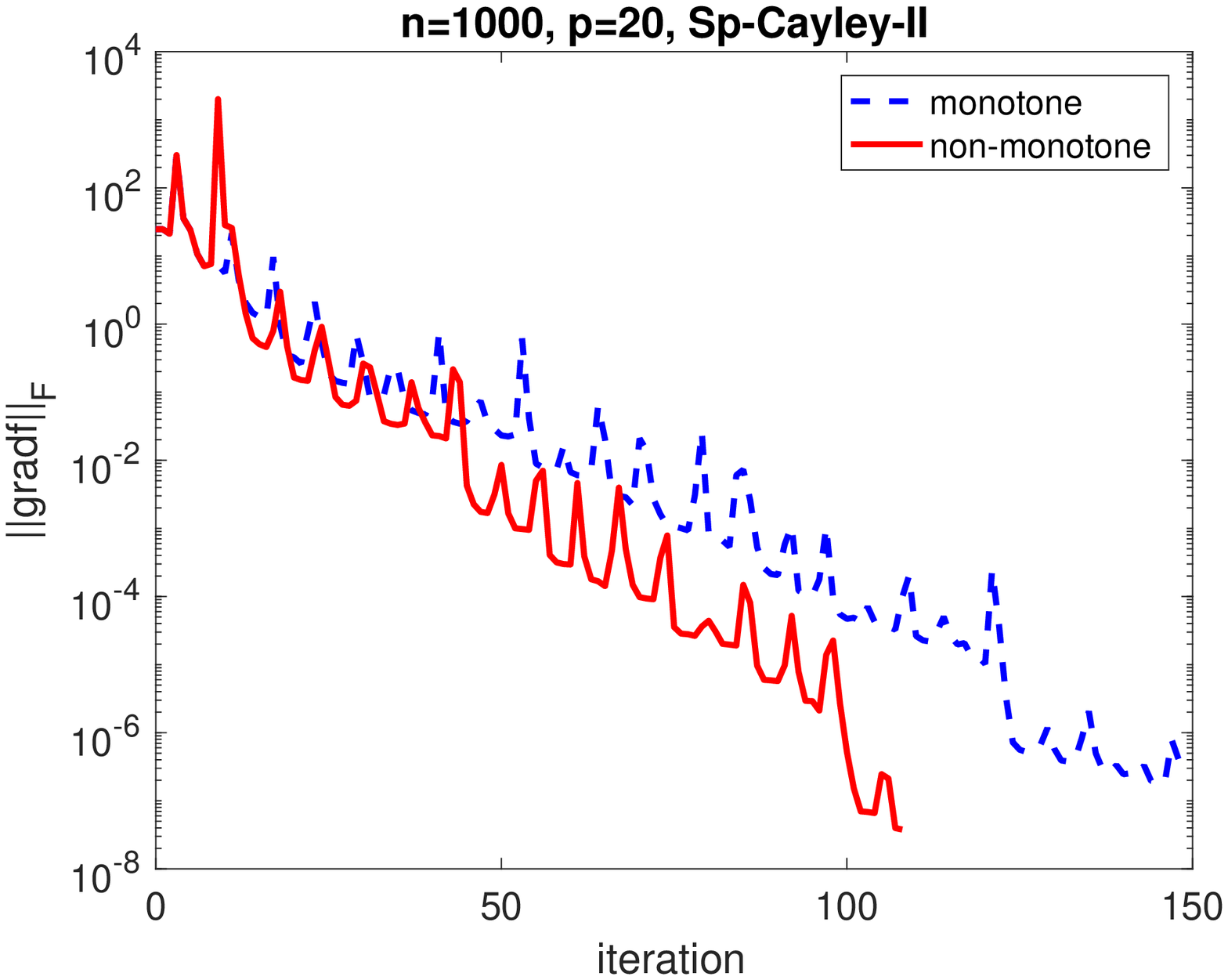}}
	\quad
	\subfigure[Function value]
	{\includegraphics[scale=.34]
		{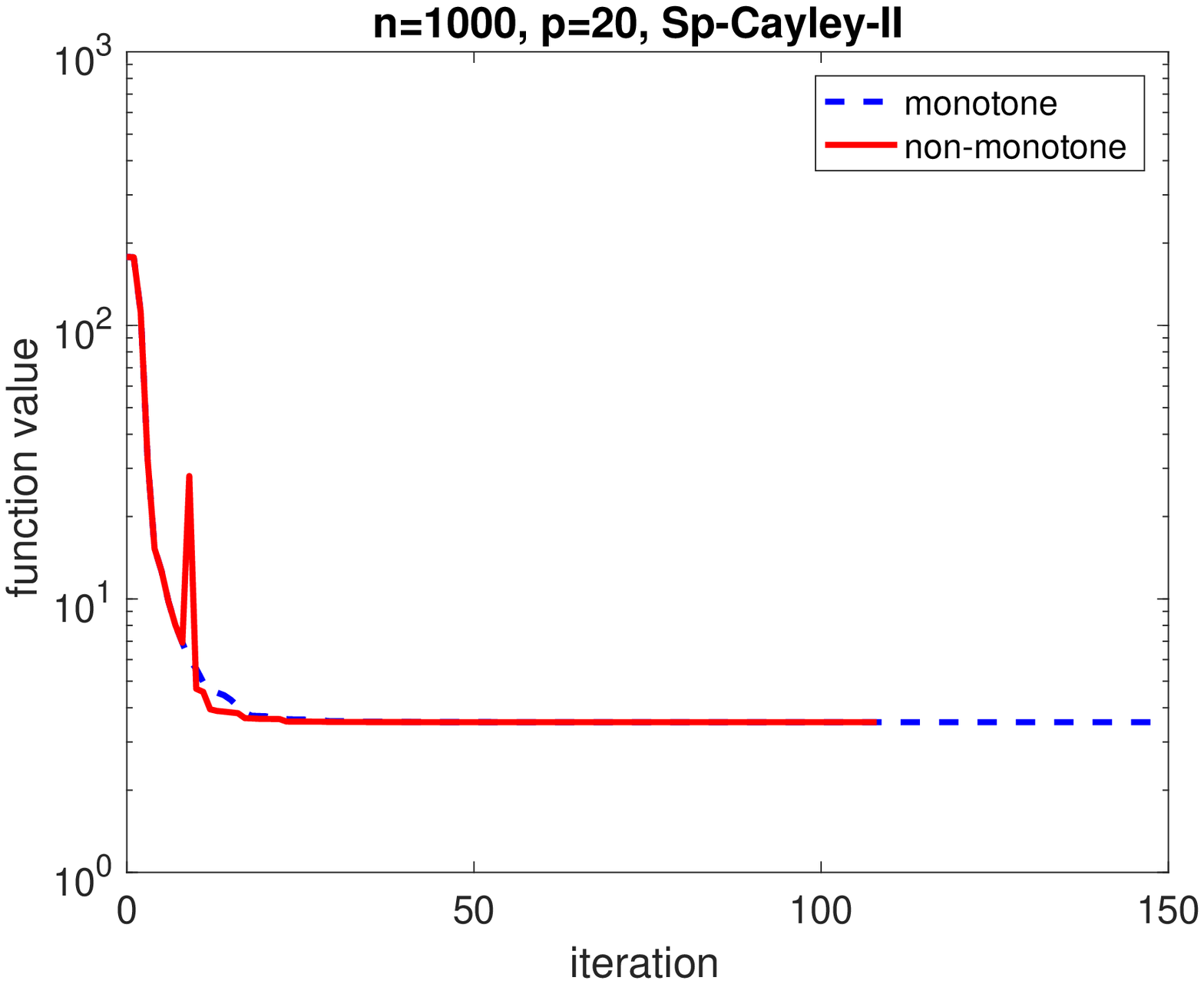}}
	\caption{A comparison {of} Sp-Cayley-II with {the} monotone and non-monotone line search {schemes}\label{fig:monotone}}
\end{figure}

\subsection{Nearest symplectic matrix problem}
In this section, we still focus on the nearest symplectic matrix problem \eqref{eq:nearest}. We first compare the algorithms Sp-Cayley-I and Sp-Cayley-II on an open matrix dataset: SuiteSparse Matrix Collection\footnote{Available from \href{https://sparse.tamu.edu/}{https://sparse.tamu.edu/}.}. Due to the different size and scale of data matrices, we choose the column number $p$ from the set $\{5,10,20,40,80\}$, and the target matrix $A\in\Rnnpp$ is generated by the first $2p$ columns of an original data matrix. In order to obtain a comparable error, we {normalize all matrices as $A/\|A\|_{\max}$}, where ${\|A\|_{\max}}:=\max_{i,j}\abs{A_{ij}}$. Numerical results are presented in \cref{tab:nearest sympectic matrix} for representative problem instances.
Here, ``fval" represents the function value, ``gradf", ``feasi", ``iter", and ``time" stand for $\norm{\mathrm{grad}_{{\rho}}f}\ff$, $\norm{X\zz JX-J}\ff$, the number of iteration{s}, and the wall-clock time in second{s}, respectively. 
From the table, we find that {both algorithms} perform well on most of the instances, and Sp-Cayley-I performs better than Sp-Cayley-II with fewer iteration number and fewer running time. In the largest problem ``2cubes\_sphere", {both} methods converge and obtain comparable results for the function value and gradient error. 
In addition, Sp-Cayley-II diverges on the problem ``msc23052" with $p=10$, while Sp-Cayley-I converges. Therefore, we conclude that for {the nearest symplectic matrix problem,} Sp-Cayley-I is more robust and efficient than {Sp-Cayley-II}.

\begin{table}[htbp]
	\scriptsize
	\centering
	\caption{{Numerical} results {for} the nearest symplectic matrix problem\label{tab:nearest sympectic matrix}} 
	\begin{tabular}{rrrrrrrrrrr}
		\hline\toprule
		\multirow{2}{*}{$p$} & Sp-Cayley-I &&&&&   Sp-Cayley-II &&&& \\\cmidrule(r){2-6}\cmidrule(r){7-11}
		& {fval}  &   {gradf}  & feasi & {iter}  & {time} & {fval}  &   {gradf}  & feasi & {iter}  & {time}   \\\midrule

		\multicolumn{11}{l}{2cubes\_sphere, $2n=101492$} \\ 
		5 & 	 3.995e+00 & 	 8.38e-05 & 	 1.39e-14 & 	 	  24 & 	 2.11 & 	 3.995e+00 & 	 1.40e-04 & 	 4.53e-15 & 	 	 23 & 	 2.18  \\ 
		10 & 	 7.331e+00 & 	 2.23e-04 & 	 1.95e-14 & 	  	 27 & 	 4.14 & 	 7.331e+00 & 	 5.28e-04 & 	 9.13e-15 & 	  	 28 & 	 4.60  \\ 
		20 & 	 1.602e+01 & 	 2.19e-04 & 	 4.76e-14 & 	  	 30 & 	 8.81 & 	 1.602e+01 & 	 6.72e-04 & 	 5.26e-14 & 	  	 36 & 	 12.53  \\ 
		40 & 	 3.423e+01 & 	 2.51e-03 & 	 7.08e-14 & 	  	 32 & 	 23.60 & 	 3.423e+01 & 	 7.20e-04 & 	 4.58e-14 & 	  	 40 & 	 31.79  \\ 
		80 & 	 1.056e+02 & 	 7.98e-04 & 	 4.05e-13 & 	  	 39 & 	 55.97 & 	 1.056e+02 & 	 1.50e-03 & 	 2.81e-10 & 	  	 41 & 	 71.11  \\

		
		
		\multicolumn{11}{l}{cvxbqp1, $2n=50000$} \\ 
		5 & 	 2.649e+00 & 	 2.03e-04 & 	 1.00e-14 & 	  	 22 & 	 0.78 & 	 2.649e+00 & 	 1.49e-03 & 	 3.15e-15 & 	  	 27 & 	 1.01  \\ 
		10 & 	 6.230e+00 & 	 2.16e-04 & 	 1.65e-14 & 	  	 26 & 	 1.54 & 	 6.230e+00 & 	 6.35e-04 & 	 6.35e-15 & 	 	 36 & 	 2.30  \\ 
		20 & 	 1.289e+01 & 	 2.86e-04 & 	 3.00e-14 & 	  	 26 & 	 2.79 & 	 1.289e+01 & 	 6.85e-04 & 	 1.08e-14 & 	  	 38 & 	 4.31  \\ 
		40 & 	 2.542e+01 & 	 1.33e-03 & 	 5.74e-14 & 	  	 28 & 	 8.19 & 	 2.542e+01 & 	 6.75e-04 & 	 3.72e-14 & 	  	 36 & 	 11.78  \\ 
		80 & 	 5.156e+01 & 	 6.38e-04 & 	 1.08e-13 & 	  	 26 & 	 19.81 & 	 5.156e+01 & 	 1.47e-03 & 	 7.33e-14 &     	 36 & 	 30.05  \\

		\multicolumn{11}{l}{msc23052, $2n=23052$} \\ 
		5 & 	 6.204e+00 & 	 1.23e-04 & 	 2.47e-04 & 	  	 92 & 	 1.30 & 	 6.204e+00 & 	 7.61e-04 & 	 5.26e-06 & 	  	 66 & 	 1.04  \\ 
		10 & 	 1.361e+01 & 	 5.87e-04 & 	 7.06e-08 & 	  	 67 & 	 1.66 & 	 4.556e-01 & 	 2.72e-02 & 	 4.24e+00 & 	  	 637 & 	 17.69  \\ 
		20 & 	 2.732e+01 & 	 9.82e-04 & 	 1.48e-04 & 	  	 80 & 	 3.70 & 	 2.730e+01 & 	 1.14e-02 & 	 8.69e-03 & 	  	 112 & 	 5.91  \\ 
		40 & 	 5.721e+01 & 	 2.18e-03 & 	 1.74e-06 & 	  	 70 & 	 6.78 & 	 5.721e+01 & 	 4.36e-03 & 	 1.98e-03 & 	  	 90 & 	 9.90  \\ 
		80 & 	 1.302e+02 & 	 9.96e-03 & 	 8.89e-03 & 	  	 98 & 	 27.87 & 	 1.302e+02 & 	 1.60e-02 & 	 1.00e-02 & 	  	 88 & 	 28.35  \\ 
		\multicolumn{11}{l}{Na5, $2n=5832$} \\ 
		5 & 	 2.170e+00 & 	 3.34e-04 & 	 1.29e-14 & 	  	 34 & 	 0.16 & 	 2.170e+00 & 	 1.05e-03 & 	 3.79e-15 & 	  	 35 & 	 0.17  \\ 
		10 & 	 4.378e+00 & 	 1.53e-04 & 	 3.62e-14 & 	  	 35 & 	 0.25 & 	 4.378e+00 & 	 4.34e-03 & 	 9.72e-15 & 	  	 41 & 	 0.29  \\ 
		20 & 	 9.033e+00 & 	 3.54e-04 & 	 6.24e-14 & 	  	 34 & 	 0.44 & 	 9.033e+00 & 	 1.46e-03 & 	 1.41e-14 & 	  	 37 & 	 0.53  \\ 
		40 & 	 1.828e+01 & 	 7.82e-04 & 	 8.58e-14 & 	  	 40 & 	 1.09 & 	 1.828e+01 & 	 1.75e-03 & 	 2.81e-14 & 	  	 44 & 	 1.24  \\ 
		80 & 	 3.731e+01 & 	 3.42e-04 & 	 3.32e-13 & 	  	 48 & 	 3.00 & 	 3.731e+01 & 	 6.74e-04 & 	 5.79e-14 & 	  	 56 & 	 3.92  \\ 
		\bottomrule\hline
	\end{tabular}
\end{table} 

We next compare our algorithms on randomly generated datasets. Given a set of samples $\{X_1,X_2,\dots,X_N\}$ with $X_i\in\symplectic$ for $i=1,\dots,N$, the extrinsic mean problem \cite[Section~3]{bhattacharya2003large} on $\symplectic$ is defined as
\begin{equation}\label{eq:minimal-distance}
\min\limits_{X\in\symplectic} \frac{1}{N}\sum_{i=1}^{N}\norm{X-X_i }\fs.
\end{equation} 
In view of~\cite[Section~3]{bhattacharya2003large}, the solutions of~\eqref{eq:minimal-distance} are those of~\eqref{eq:nearest} with $A=\frac{1}{N}\sum_{i=1}^{N}X_i$. This allows us to reuse the code that addressed~\eqref{eq:nearest}.
We test Sp-Cayley-I and Sp-Cayley-II for solving the problem \eqref{eq:minimal-distance} with three different random sample sets: (i) $N=100, n=p=2$; (ii) $N=100, n=p=10$; (iii) $N=1000, n=1000,p=20$. In each dataset, samples are randomly generated around a selected center $Y^0\in\symplectic$. Specifically, we choose $X_i=Y^0 e^{J(W_i+W_i\zz)}$, where \texttt{W$_i$=0.1*randn(2*p,2*p)}. The initial point $X^0$ and the center $Y^0$ are calculated by the strategy 3) (\cref{subsection:Stopping criteria}) for datasets (i)-(ii), and 2) for dataset (iii). In the first two sets (the symplectic group), Sp-Cayley-I and Sp-Cayley-II reduce to the same algorithm due to the same Riemannian gradient. Therefore, we omit the results of Sp-Cayley-II. We run our algorithms twice with different stopping tolerances: default and $\{\epsilon = 10^{-10}, \epsilon_x = 10^{-10}, \epsilon_f = 10^{-14}\}$. The detailed results for these two setting
are presented in \cref{tab:LS}. It reveals that our algorithms converge for three different sample sets with different stopping tolerances. Moreover, we show the initial and final errors of each sample in \cref{fig:LS}, where $X^*$ denotes the solution obtained by Sp-Cayley. From the figure, we observe that for both dataset (i) and (ii), the sample error greatly decreases with Sp-Cayley.

\begin{table}[htbp]
	\scriptsize
	\centering
	\caption{Numerical results for the extrinsic mean problem on $\symplectic$ \label{tab:LS}}
	\begin{tabular}{crrrrrrrrrr}
		\hline\toprule
		\multirow{2}{*}{$\epsilon$} & Sp-Cayley-I &&&&&   Sp-Cayley-II &&&& \\\cmidrule(r){2-6}\cmidrule(r){7-11}
		& {fval}  &   {gradf}  & feasi & {iter}  & {time} & {fval}  &   {gradf}  & feasi & {iter}  & {time}   \\\midrule
		
		\multicolumn{11}{l}{dataset (i), $n=2,p=2,N=100$} \\ 
		1e-05 & 1.627e+00  & 	 3.74e-05 & 	 1.07e-14 & 	  	 82 & 	 0.01  & - & - & - & - & 	 -  \\ 
		1e-10 & 1.627e+00 & 	 9.14e-10 & 	 8.84e-15 & 	  	 158 & 	 0.01  & - & - & - & - & 	 -  \\ 
		\multicolumn{11}{l}{dataset (ii), $n=10,p=10,N=100$} \\ 
		1e-05 & 3.068e+01 & 	 1.13e-04 & 	 8.97e-14 & 	  	 158 & 	 0.02  & - & - & - & - & 	 -  \\ 
		1e-10 & 3.068e+01 & 	 6.91e-09 & 	 1.12e-13 & 	  	 316 & 	 0.03    & - & - & - & - & 	 -  \\
		
		\multicolumn{11}{l}{dataset (iii), $n=1000,p=20,N=1000$} \\ 
		1e-05 & 	 1.333e+02 & 	 1.54e-04 & 	 3.56e-13 & 	  	 154 & 	 0.96 & 	 1.333e+02 & 	 2.54e-04 & 	 3.38e-13 & 	  	 178 & 	 1.05   \\ 
		1e-10 & 	 1.333e+02 & 	 3.67e-08 & 	 5.32e-13 & 	  	 256 & 	 1.33 & 	 1.333e+02 & 	 2.83e-07 & 	 4.63e-13 & 	  	 262 & 	 1.50  \\ 
		
		\bottomrule\hline
	\end{tabular}
\end{table} 

\begin{figure}[htbp]
	\centering
	\subfigure[Dataset (i), $\epsilon=10^{-10}$]
	{\includegraphics[scale=.35]
		{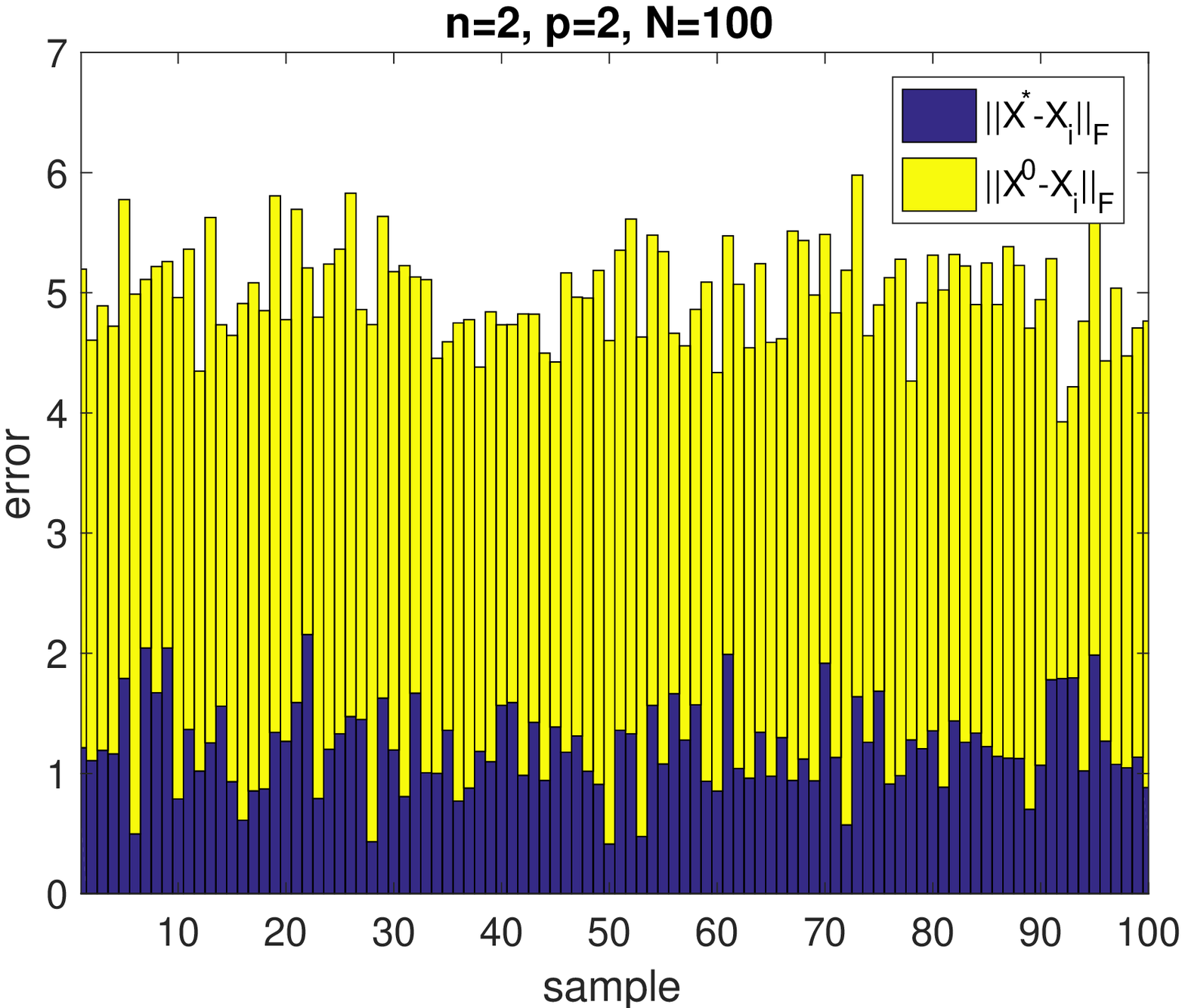}}
	\quad
	\subfigure[Dataset (ii), $\epsilon=10^{-10}$]
	{\includegraphics[scale=.35]
		{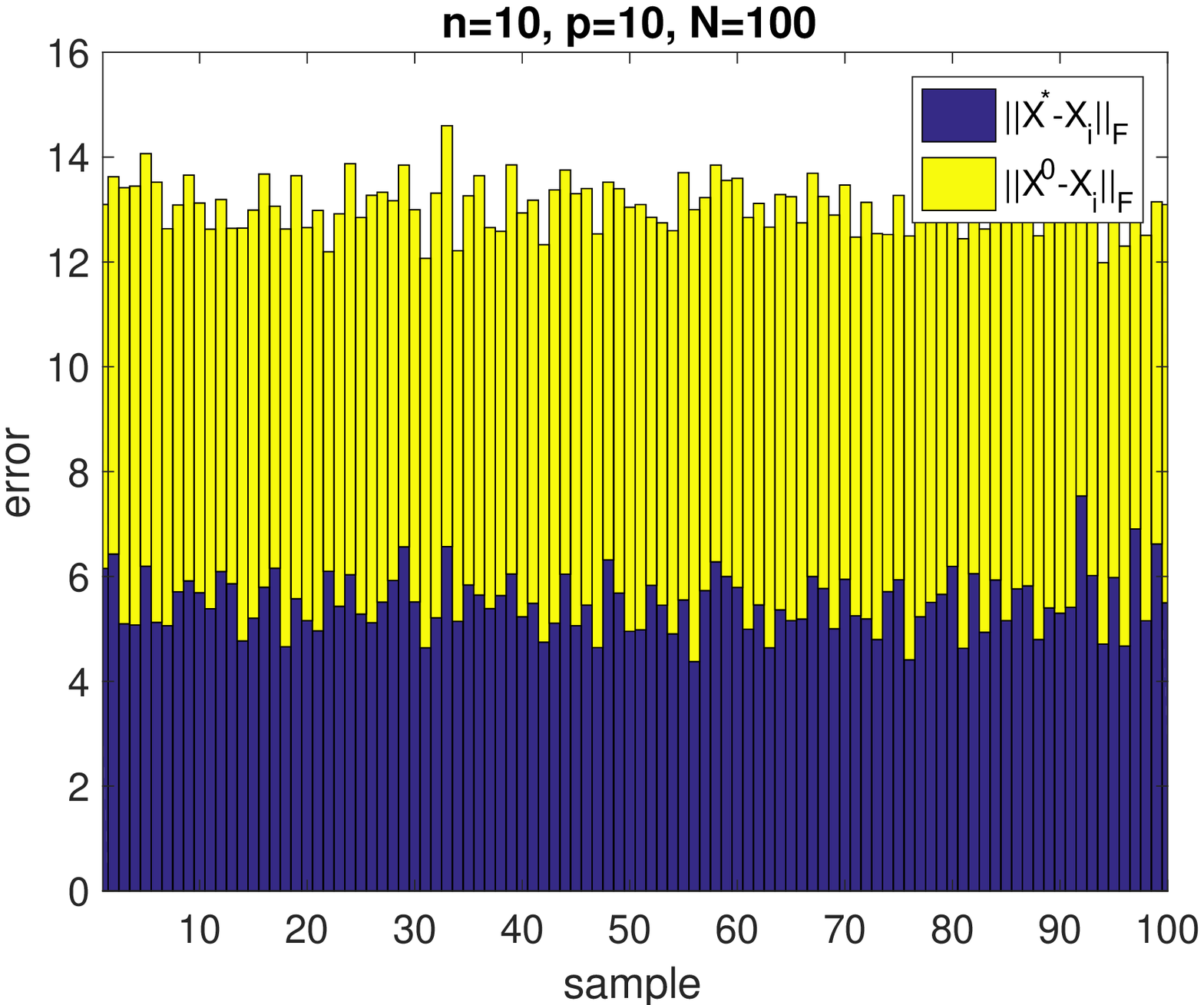}}
	\caption{A comparison of initial and final errors for each sample}
	\label{fig:LS}
\end{figure}


\subsection{Minimization of the Brockett cost function} \label{ssec:Brokett}
In \cite{brockett1989least}, Brockett investigated 
least squares matching problems on matrix Lie groups {with an~objective function}
\begin{equation}\label{eq:brockett}
f(X):=\tr(X\zz AXN-2BX\zz),
\end{equation}
where $A,N,B\in\Rnn$ are given matrices. This function is widely known as 
the Brockett cost function.  Recently, in \cite{machado2002optimization}, the results of \cite{brockett1989least} were extended to $P$-orthogonal matrices satisfying  $X\zz PX=P$ with a~given orthogonal matrix $P\in\Rnn$. For $P=J$, the problem reduces to an optimization problem on the symplectic group $\symplecticgroup$. Such a~problem was more recently considered in \cite{birtea2018optimization}, where several optimization algorithms were proposed. 

{In} this section, we study a~well-defined (bounded from below) minimization problem based on the Brockett cost function \eqref{eq:brockett} {with $N=I$ and $B=0$}. 
Specifically, 
we consider the following optimization problem
\begin{equation*}\label{eq:brockett-symplectic}
\min\limits_{X\in\symplectic} \tr(X\zz AX),
\end{equation*} 
where  $A\in\R^{2n\times 2n}$ is a symmetric positive definite matrix. Such a~problem arises, for example, in the symplectic eigenvalue problem \cite{williamson1936algebraic,bhatia2015symplectic} which will be investigated in more detail in \cref{ssec:sev}. In this test, the matrix $A\in \R^{2n\times 2n}$ is randomly generated {as \mbox{$A=Q\Lambda Q\zz$}}, where $Q\in \R^{2n\times 2n}$ is an orthogonal matrix computed from a QR factorization \texttt{Q=qr(randn(2*n,2*n)}), and $\Lambda\in \R^{2n\times 2n}$ is a diagonal matrix with diagonal elements $\Lambda_{ii}=\lambda^{1-i}$ for $i=1,2,\dots,2n$. 
The parameter $\lambda \ge 1$ determines the decay of eigenvalues of $A$. Th experiments are divided into two parts. At first, we solve the problem on the symplectic group, i.e., $n=p$, for matrices of different size $20\times20$, $80\times80$ and $160\times160$ and different parameters $\lambda\in\{1.01,1.04,1.07,1.1\}$. The numerical results are {presented} in \cref{tab:brockett}. From the table, we observe that Sp-Cayley (since Sp-Cayley-I and Sp-Cayley-II reduce to the same method when $n=p$) works well on different problems.
In the second part, we move to the symplectic Stiefel manifold and test {our algorithms on different problems with parameters} $n=1000,2000,3000$, $p=5,10,20,40,80$, and $\lambda$ {as above}. The corresponding results are also presented in \cref{tab:brockett}.
It illustrates that for relatively large problems, Sp-Cayley-I and Sp-Cayley-II still perform well, and have the comparable function values and feasibility violations. 

\begin{table}[htbp]
	\scriptsize
	\centering
	\caption{{Numerical} results in the Brockett cost function minimization\label{tab:brockett}} 
	\begin{tabular}{rrrrrrrrrrr}
		\hline\toprule
		\multirow{2}{*}{} & Sp-Cayley-I &&&&&   Sp-Cayley-II &&&& \\\cmidrule(r){2-6}\cmidrule(r){7-11}
		& {fval}  &   {gradf}  & feasi & {iter}  & {time} & {fval}  &   {gradf}  & feasi & {iter}  & {time}   \\\midrule
		
		${\lambda}$ &\multicolumn{10}{l}{$n=p=10$} \\
		1.01 & 	 3.642e+01 & 	 5.73e-06 & 	 3.98e-14 & 	  	 12 & 	 0.01 &		- &  - &  - &  - &  - \\
		1.04 & 	 2.793e+01 & 	 5.75e-06 & 	 5.19e-14 & 	  	 14 & 	 0.01 & 	 - &  - &  - &  - &  -\\
		1.07 & 	 2.168e+01 & 	 1.10e-05 & 	 2.20e-14 & 	  	 14 & 	 0.01 & 	   - &  - &  - &  - &  -\\
		1.10 & 	 1.737e+01 & 	 6.26e-06 & 	 4.85e-14 & 	  	 19 & 	 0.01 & 	   - &  - &  - &  - &  -\\
		 & \multicolumn{10}{l}{$n=p=40$} \\ 
		1.01 & 	 1.094e+02 & 	 4.25e-06 & 	 4.90e-13 & 	  	 14 & 	 0.01 & 	   - &  - &  - &  - &  -\\
		1.04 & 	 4.197e+01 & 	 1.61e-04 & 	 6.56e-13 & 	  	 25 & 	 0.02 & 	   - &  - &  - &  - &  -\\
		1.07 & 	 2.032e+01 & 	 4.69e-05 & 	 6.15e-13 & 	  	 46 & 	 0.04 & 	   - &  - &  - &  - &  -\\
		1.10 & 	 1.222e+01 & 	 1.00e-04 & 	 5.28e-13 & 	  	 72 & 	 0.07 & 	  - &  - &  - &  - &  -\\
		& \multicolumn{10}{l}{$n=p=80$} \\ 
		1.01 & 	 1.531e+02 & 	 2.98e-05 & 	 3.30e-12 & 	  	 19 & 	 0.07 & 	   - &  - &  - &  - &  -\\
		1.04 & 	 3.256e+01 & 	 1.46e-04 & 	 3.95e-12 & 	  	 59 & 	 0.21 & 	   - &  - &  - &  - &  -\\
		1.07 & 	 1.421e+01 & 	 2.06e-04 & 	 2.89e-12 & 	  	 154 & 	 0.58 & 	   - &  - &  - &  - &  -\\
		1.10 & 	 8.442e+00 & 	 1.90e-04 & 	 2.39e-12 & 	  	 252 & 	 0.94 & 	   - &  - &  - &  - &  -\\\midrule
		${p}$ & \multicolumn{10}{l}{$n=1000, \lambda=1.01$} \\ 
		5 & 	 3.150e-04 & 	 2.10e-04 & 	 1.20e-14 & 	  	 182 & 	 0.90 & 	 2.015e-04 & 	 1.62e-04 & 	 1.50e-14 & 	 	 208 & 	 1.05  \\ 
		10 & 	 3.631e-04 & 	 1.61e-04 & 	 3.07e-14 & 	  	 293 & 	 2.05 & 	 4.028e-04 & 	 1.95e-04 & 	 4.40e-14 & 	  	 260 & 	 1.93  \\ 
		20 & 	 5.902e-04 & 	 1.85e-04 & 	 9.02e-14 & 	 	 362 & 	 3.18 & 	 5.350e-04 & 	 1.81e-04 & 	 1.37e-13 & 	  	 280 & 	 2.97  \\ 
		40 & 	 7.764e-04 & 	 1.61e-04 & 	 3.82e-13 & 	  	 484 & 	 7.06 & 	 6.669e-04 & 	 1.52e-04 & 	 7.34e-13 & 	  	 548 & 	 9.77  \\ 
		80 & 	 1.037e-03 & 	 5.72e-04 & 	 1.65e-12 & 	  	 649 & 	 24.33 & 	 1.094e-03 & 	 1.72e-04 & 	 3.02e-12 & 	  	 619 & 	 21.14  \\ 
		& \multicolumn{10}{l}{$n=2000, \lambda=1.04$} \\ 
		10 & 	 1.241e-04 & 	 1.77e-04 & 	 2.95e-14 & 	  	 146 & 	 5.15 & 	 1.105e-04 & 	 1.85e-04 & 	 3.08e-14 & 	  	 174 & 	 6.22  \\ 
		20 & 	 1.748e-04 & 	 1.84e-04 & 	 2.12e-13 & 	  	 164 & 	 7.06 & 	 1.671e-04 & 	 2.06e-04 & 	 1.99e-13 & 	  	 234 & 	 10.61  \\ 
		40 & 	 2.216e-04 & 	 1.77e-04 & 	 3.37e-13 & 	  	 220 & 	 11.93 & 	 1.499e-04 & 	 1.36e-04 & 	 3.13e-13 & 	  	 270 & 	 17.54  \\ 
		80 & 	 2.948e-04 & 	 1.69e-04 & 	 1.91e-12 & 	  	 303 & 	 27.72 & 	 2.031e-04 & 	 1.47e-04 & 	 1.75e-12 & 	  	 273 & 	 24.48  \\ 
		& \multicolumn{10}{l}{$n=3000, \lambda=1.10$} \\ 
		5 & 	 1.794e-05 & 	 7.72e-04 & 	 9.63e-15 & 	  	 87 & 	 5.65 & 	 3.433e-05 & 	 1.70e-04 & 	 7.48e-15 & 	 	 74 & 	 4.98  \\ 
		10 & 	 5.133e-05 & 	 1.55e-04 & 	 5.01e-14 & 	  	 110 & 	 8.19 & 	 3.337e-05 & 	 2.34e-04 & 	 4.91e-14 & 	  	 125 & 	 8.65  \\ 
		20 & 	 7.352e-05 & 	 1.49e-04 & 	 1.58e-13 & 	  	 141 & 	 12.33 & 	 4.253e-05 & 	 1.32e-04 & 	 1.53e-13 & 	  	 142 & 	 12.41  \\ 
		40 & 	 1.226e-04 & 	 1.80e-04 & 	 4.43e-13 & 	  	 176 & 	 20.75 & 	 1.085e-04 & 	 1.91e-04 & 	 4.14e-13 & 	  	 134 & 	 16.43  \\ 
		80 & 	 1.661e-04 & 	 1.70e-04 & 	 3.96e-12 & 	  	 180 & 	 31.62 & 	 1.637e-04 & 	 1.91e-04 & 	 3.48e-12 & 	  	 188 & 	 35.02  \\ 
		\bottomrule\hline
	\end{tabular}
\end{table} 

\subsection{The {symplectic} eigenvalue {problem}} \label{ssec:sev}
It was shown in \cite{williamson1936algebraic} that for every
symmetric positive definite matrix $M\in\mathbb{R}^{2n\times 2n}$, there exists $X\in\symplecticgroup$ such that 
\begin{equation}
	X\zz MX = \begin{bmatrix} D & \\ 
	& D\end{bmatrix},\label{eq:sev}
\end{equation}
where $D =  {\mathrm{diag}}(d_1,\dots,d_n)$ and $0<d_1\le\dots\le d_n$. These entries are called {\em symplectic eigenvalues}, and \eqref{eq:sev} is referred to as the {\em symplectic eigenvalue problem}. Note that the symplectic eigenvalues are uniquely defined, whereas the symplectic transformation $X$ is not unique. It can be shown (see~\cite[(12)]{hiroshima2006}) that the symplectic eigenvalues of $M$ coincide with the positive (standard) eigenvalues of the matrix $G = \mathrm{i} J\zz M$, where $\mathrm{i}=\sqrt{-1}$ is the imaginary unit. 

In practice, it can be of interest to compute only a few extreme symplectic eigenvalues.
They can be determined by exploiting 
the following relationship between the $p\leq n$ smallest symplectic eigenvalues of $M$ and a symplectic optimization 
problem 
\begin{equation*}\label{eq:symplectic-eig}
2\sum_{j=1}^{p}d_j = \min_{X\in\symplectic} \tr(X\zz MX)
\end{equation*} 
which was first established in \cite{hiroshima2006} and further investigated in  \cite{bhatia2015symplectic}. Based on this relation, we aim to compute the smallest symplectic eigenvalue $d_1$ using \crefalg{alg:non-monotone gradient} with $p=1$. We test this algorithm on different data matrices from the MATLAB matrix gallery: (i) the Lehmer matrix; (ii) the Wilkinson matrix; (iii) the companion matrix of the polynomial whose coefficients are $1,\ldots,2n+1$; (iv) the central finite difference matrix. Whenever $M$ is not positive definite, which happens to the second and third case, we use $M\zz M$ instead of $M$ to generate the appropriate problem. The
parameters in \crefalg{alg:non-monotone gradient} are default settings. For a comparison, we {also} compute the smallest positive eigenvalue of $G$ by using the MATLAB function \texttt{eig}. The obtained results are shown in \cref{Tab:smallestSymplEig}. We observe that the symplectic eigenvalues computed by  Sp-Cayley-I are comparable with that provided by \texttt{eig}. 

\begin{table}[htbp]
	\small
	\centering
	\caption{The smallest symplectic eigenvalues\label{Tab:smallestSymplEig}}
	\begin{tabular}{lrrr}
		\hline\toprule
		Model matrix&  $n$ &   \texttt{eig} & Sp-Cayley-I \\ \midrule
		Lehmer & 50 &                  7.67480301454e-03 & 7.67480302204e-03  \\
		Wilkinson & 75 &             1.53471652403e+01 & 1.53471650305e+01  \\ 
		Companion & 500 & 		   5.47240371331e-02 & 5.47244189951e-02  \\
		Centr. Finite Diff. & 500 & 2.23005375485e-05&2.23005375834e-05  \\
		\bottomrule\hline
	\end{tabular}
\end{table} 

\section{Conclusion and perspectives}\label{sec:conclusion}
We have developed the ingredients---retraction and Riemannian gradient---that turn general first-order Riemannian optimization methods into concrete numerical algorithms for the optimization problem~\eqref{prob:original} on the symplectic Stiefel manifold $\symplectic$. The algorithms only need to be provided with functions that evaluate the objective function $f$ and the Euclidean gradient $\nabla \bar{f}$.
In order to cover the case of the Cayley retraction, we have extended the convergence analysis of a Riemannian non-monotone gradient descent method to encompass the situation where the retraction is not globally defined. This extended analysis leads to the conclusion that, for the sequences generated by the proposed algorithms, every accumulation point is a stationary point. Numerical experiments demonstrate the efficiency of the proposed algorithms. All the results in this paper apply to the symplectic group as the special case $p=n$.

This paper opens several perspectives for further research. In particular, it is tempting to further exploit the leeway in the choice of the metric and the retraction. Extensions to quotients of other quadratic Lie groups are also worth considering, as well as other applications.
\bibliographystyle{siamplain}
\bibliography{bibfile}
%
%
\end{document}